\DeclareMathAlphabet\mathbfcal{OMS}{cmsy}{b}{n}
\newtheorem{theorem}{Theorem}
\newtheorem{prop}{Proposition}
\newtheorem{lemma}{Lemma}
\newtheorem{remark}{Remark}
\newcommand{\R}{\mathbb{R}}
\newcommand{\N}{\mathbb{N}}
\newcommand{\A}{\mathcal{A}}
\begin{document}

\title{Constructive proof of the exact controllability for semi-linear wave equations}

\author{
\textsc{J\'er\^ome Lemoine}\thanks{Laboratoire de Math\'ematiques Blaise Pascal, Universit\'e Clermont Auvergne, 
UMR CNRS 6620, Campus universitaire des C\'ezeaux, 3, place Vasarely, 63178, Aubi\`ere, France. E-mail: {\tt jerome.lemoine@uca.fr.}}\and
\textsc{Arnaud M\"unch}\thanks{Laboratoire de Math\'ematiques Blaise Pascal, Universit\'e Clermont Auvergne, 
UMR CNRS 6620, Campus universitaire des C\'ezeaux, 3, place Vasarely, 63178, Aubi\`ere, France. E-mail: {\tt arnaud.munch@uca.fr.}}
}

\maketitle

\begin{abstract}
The exact distributed controllability of the semilinear wave equation $\partial_{tt}y-\Delta y + g(y)=f \,1_{\omega}$ posed over multi-dimensional and bounded domains, assuming that $g\in C^1(\mathbb{R})$ satisfies the growth condition $\limsup_{r\to \infty} g(r)/(\vert r\vert \ln^{1/2}\vert r\vert)=0$ has been obtained by Fu, Yong and Zhang in 2007. The proof based on a non constructive Leray-Schauder fixed point theorem makes use of precise estimates of the observability constant for a linearized wave equation. 
Assuming that $g^\prime$ does not grow faster than $\beta \ln^{1/2}\vert r\vert$ at infinity for $\beta>0$ small enough and that $g^\prime$ is uniformly H\"older continuous on $\mathbb{R}$ with exponent $s\in (0,1]$, we design a constructive proof yielding an explicit sequence converging to a controlled solution  for the semilinear equation, at least with order $1+s$ after a finite number of iterations. 
\end{abstract}

 \textbf{AMS Classifications:} 35Q30, 93E24.

\textbf{Keywords:} Semilinear wave equation,  exact controllability, least-squares approach.

\section{Introduction}

 Let $\Omega$ be a bounded domain of $\mathbb{R}^d$, $d\in \{2,3\}$ with $C^{1,1}$ boundary and $\omega \subset\subset \Omega$ be a non empty open set. Let $T>0$ and denote $Q_T:=\Omega\times (0,T)$, $q_T:=\omega\times (0,T)$ and $\Sigma_T:=\partial\Omega\times (0,T)$. We consider the semilinear wave equation
\begin{equation}
\label{eq:wave-NL}
\left\{
\begin{aligned}
& \partial_{tt}y - \Delta y +  g(y)= f 1_{\omega}, & \textrm{in}\,\, Q_T,\\
& y=0, & \textrm{on}\,\, \Sigma_T, \\
& (y(\cdot,0),y_t(\cdot,0))=(u_0,u_1), & \textrm{in}\,\, \Omega,
\end{aligned}
\right.
\end{equation}
where $(u_0,u_1)\in \boldsymbol{V}:=H^1_0(\Omega)\times L^2(\Omega)$ is the initial state of $y$ and $f\in L^2(q_T)$ is a {\it control} function. Here and throughout the paper, $g:\mathbb{R}\to \mathbb{R}$ is a function of class $C^1$ such that
$\vert g(r)\vert \leq C(1+\vert r\vert)\ln(2+\vert r\vert)$ for every $r\in \mathbb{R}$  and some $C>0$. Then, \eqref{eq:wave-NL} has a unique global weak solution in $C([0,T]; H_0^1(\Omega)) \cap C^1([0,T]; L^2(\Omega))$ (see \cite{CazenaveHaraux1980},\cite{Cannarsa_Loreti_Komornik2002}).

The exact controllability for \eqref{eq:wave-NL} in time $T$ is formulated as follows: for any $(u_0,u_1), (z_0,z_1)\in \boldsymbol{V}$, find a control function $f\in L^2(q_T)$ such that the weak solution of \eqref{eq:wave-NL} satisfies $(y(\cdot,T),\partial_{t}y(\cdot,T))=(z_0,z_1)$. 
Assuming a growth condition on the nonlinearity $g$ at infinity, this problem has been solved in \cite{Zhang2007}.
\begin{theorem}\label{ZhangTH}\cite{Zhang2007} 
For any $x_0\in \mathbb{R}^d\backslash\overline{\Omega}$, let $\Gamma_0=\{x\in \partial\Omega, (x-x_0)\cdot \nu(x)>0\}$ and, for any $\epsilon>0$,  $\mathcal{O}_{\epsilon}(\Gamma_0)=\{y\in \mathbb{R}^d \mid \vert y-x\vert \leq \epsilon \, \textrm{for}\,  x\in \Gamma_0\}$. Assume 
\begin{enumerate}[label=$\bf (H_0)$,leftmargin=1.5cm]
\item\label{condgeom}\ $T>2\max_{x\in\overline{\Omega}}\vert x-x_0\vert$ and  $\omega\subseteq \mathcal{O}_{\epsilon}(\Gamma_0)\cap \Omega$ for some $\epsilon>0$.
\end{enumerate}
If $g$ satisfies 
\begin{enumerate}[label=$\bf (H_1)$,leftmargin=1.5cm]
\item\label{asymptotic_behavior}\ $\limsup_{\vert r\vert \to \infty} \frac{\vert g(r)\vert }{\vert r\vert \ln^{1/2}\vert r\vert}=0$
\end{enumerate}
then \eqref{eq:wave-NL} is exactly controllable in time $T$.
\end{theorem}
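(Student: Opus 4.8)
The plan is to recast the exact controllability problem as a fixed point for a map built from the controllability of an associated \emph{linear} wave equation with a bounded potential, and then to invoke a topological fixed point theorem. Since $g\in C^1(\mathbb{R})$, I would first write $g(r)=g(0)+\hat g(r)\,r$ with $\hat g(r):=(g(r)-g(0))/r$ for $r\neq 0$ and $\hat g(0):=g^\prime(0)$; hypothesis \ref{asymptotic_behavior} then gives $|\hat g(r)|\lesssim \ln^{1/2}(2+|r|)$ as $|r|\to\infty$. For a fixed auxiliary state $z$ in a suitable energy-type space, I would consider the linear controlled equation $\partial_{tt}y-\Delta y+\hat g(z)\,y=f1_\omega-g(0)$ with initial data $(u_0,u_1)$, and seek $f\in L^2(q_T)$ steering it to the target $(z_0,z_1)$ at time $T$ (using the time-reversibility of the wave equation, one may if convenient reduce to null targets). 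Solving this linear problem for every $z$ and selecting the control of minimal $L^2$-norm defines a map $\Lambda:z\mapsto y$ whose fixed points are exactly the controlled solutions of \eqref{eq:wave-NL}.

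Second, I would solve each linear controllability problem by the Hilbert Uniqueness Method: the minimal-norm control is $f=\varphi\,1_\omega$, where $\varphi$ solves the adjoint equation $\partial_{tt}\varphi-\Delta\varphi+\hat g(z)\,\varphi=0$ with data minimizing a strictly convex quadratic functional over adjoint states. Coercivity of this functional — hence solvability and an explicit bound on $\|f\|_{L^2(q_T)}$ — is equivalent to an observability inequality $\|(\varphi(0),\varphi_t(0))\|^2\le C_{\mathrm{obs}}\big(\|\hat g(z)\|_\infty\big)\int_{q_T}|\varphi|^2$ for the adjoint equation with potential $a=\hat g(z)$.

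Third — and this is the crux — I would establish this inequality with \emph{explicit} control of $C_{\mathrm{obs}}$ in terms of $\|a\|_{L^\infty(Q_T)}$, via a global Carleman estimate for the wave operator adapted to the geometry of \ref{condgeom}. The weight is built from $\psi(x)\approx|x-x_0|^2$, and the time condition $T>2\max_{\overline\Omega}|x-x_0|$ together with the multiplier geometry guarantees that the observation near $\Gamma_0$ (hence on $q_T$, as $\omega\subseteq\mathcal O_\epsilon(\Gamma_0)\cap\Omega$) dominates the weighted energy. The potential term is absorbed into the dominant Carleman term by taking the large parameter proportional to a power of $\|a\|_\infty$, yielding $C_{\mathrm{obs}}\le\exp\!\big(C(1+\|a\|_\infty^{\gamma})\big)$ for an explicit exponent $\gamma<2$. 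The whole point of \ref{asymptotic_behavior} is that $\|a\|_\infty\lesssim\ln^{1/2}(2+\|z\|)$ makes this quantity grow strictly slower than any positive power of $\|z\|$.

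Finally, combining the bound on $\|f\|$ with a Gronwall energy estimate for the linear equation, I would obtain an a priori bound $\|\Lambda(z)\|\le \kappa(\|z\|)$ with $\kappa(R)/R\to 0$ as $R\to\infty$, so that $\Lambda$ maps a sufficiently large ball into itself. Continuity of $\Lambda$ and compactness — the latter coming from the compact embedding $H^1_0(\Omega)\hookrightarrow\hookrightarrow L^p(\Omega)$ entering through the potential $\hat g(z)$ — then permit invoking the Leray--Schauder (or Schauder) fixed point theorem to produce $y=\Lambda(y)$, a controlled solution. I expect two main obstacles: proving the Carleman/observability estimate with the sharp dependence on $\|a\|_\infty$ that matches the $\ln^{1/2}$ threshold; and the functional-analytic setup in dimension $d\in\{2,3\}$, where $H^1_0(\Omega)\not\hookrightarrow L^\infty(\Omega)$ forces one to interpret $\hat g(z)$ and the estimates in carefully chosen $L^p$-based spaces rather than in $L^\infty$, and to verify the continuity and compactness of $\Lambda$ in that setting.
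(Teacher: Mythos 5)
Your proposal follows essentially the same route as the proof this paper relies on (the Fu--Yong--Zhang argument recalled in the introduction and reproduced in part in Appendix~\ref{sec:linearizedwave}): write $g(r)=g(0)+\hat g(r)\,r$, solve the linearized control problem with potential $\hat g(z)$ by HUM with the control of minimal $L^2(q_T)$-norm, quantify the observability constant in terms of the potential, use the growth hypothesis to show that the resulting map sends a sufficiently large ball into itself, and conclude by compactness and a Leray--Schauder fixed-point argument.

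There is, however, one substantive inconsistency at the crux, namely your third step. You state the observability constant as $\exp\bigl(C(1+\Vert a\Vert_{L^\infty(Q_T)}^{\gamma})\bigr)$ with $\gamma<2$, yet your own final paragraph correctly observes that in dimension $d\in\{2,3\}$ the fixed-point space must be $L^p$-based; for $z$ in a ball of such a space the potential $a=\hat g(z)$ is controlled only in $L^\infty(0,T;L^d(\Omega))$, not in $L^\infty(Q_T)$ (indeed $\Vert \hat g(z)\Vert_{L^\infty(Q_T)}$ need not even be finite), so the sharper bounds with $\gamma<2$ known for genuinely bounded potentials are unavailable. For potentials merely in $L^\infty(0,T;L^d(\Omega))$, the absorption of the potential term in the Carleman estimate goes through the Sobolev embedding, $\Vert a\phi\Vert_{L^2(\Omega)}\le \Vert a\Vert_{L^d(\Omega)}\Vert \phi\Vert_{L^{2d/(d-2)}(\Omega)}\lesssim \Vert a\Vert_{L^d(\Omega)}\Vert\nabla\phi\Vert_{L^2(\Omega)}$, hence must be absorbed by the \emph{first-order} Carleman term, which forces the large parameter to satisfy $s\gtrsim \Vert a\Vert^2$: this yields exactly $\gamma=2$, i.e. the constant $Ce^{C\Vert A\Vert^2_{L^\infty(0,T;L^d(\Omega))}}$ of Proposition~\ref{iobs_zhang_2007}. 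With $\gamma=2$ the constant does \emph{not} grow slower than every power of $\Vert z\Vert$; under $\vert\hat g\vert\le\alpha+\beta\ln^{1/2}(1+\vert\cdot\vert)$ it grows like $(1+\Vert z\Vert)^{C\beta^2}$, i.e. like a genuine (small) power. The fixed-point argument still closes, but by a different mechanism than the one you invoke: the condition $\limsup=0$ in \ref{asymptotic_behavior} allows one to take $\beta$ so small that $C\beta^2<1$, giving the sublinear bound $\kappa(R)\lesssim R^{C\beta^2}$ and hence a ball mapped into itself. This is precisely why $\ln^{1/2}$ (whose square is $\ln$) is the threshold; your claimed mechanism, if available, would prove controllability under the weaker growth $\ln^{1/\gamma}$ with $1/\gamma>1/2$, beyond what the known estimates for $L^d$-valued potentials provide. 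The fix is simple: replace your step three by the exponent-$2$ estimate in the $L^\infty(0,T;L^d(\Omega))$ norm and rerun your final step with the arbitrarily-small-power argument.
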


This result improves \cite{Li_Zhang_2000} where a stronger condition of the support $\omega$ is made, namely that $\omega$ is a neighborhood of $\partial\Omega$ and that $T>\textrm{diam}(\Omega\backslash\omega)$. In Theorem \ref{ZhangTH}, $\Gamma_0$ is the usual star-shaped part of the whole boundary of $\Omega$ introduced in \cite{JLL88}. 

A special case of Theorem \ref{ZhangTH} is when $g$ is globally Lipschitz continuous, which gives the main result of \cite{zuazua91}, later generalized to an abstract setting in \cite{Lasiecka91} using a global version of Inverse Function Theorem  and improved in \cite{Zhang_2000} for control domains $\omega$ satisfying the classical multiplier method of Lions \cite{JLL88}. 

Theorem \ref{ZhangTH} extends to the multi-dimensional case the result of \cite{zuazua93} devoted to the one dimensional case under the condition  $\limsup_{\vert r\vert \to \infty} \frac{\vert g(r)\vert }{\vert r\vert \ln^2\vert r\vert}=0$, relaxed later on in \cite{Cannarsa_Loreti_Komornik2002}, following \cite{Imanuvilov89}, and in \cite{Martinez2003}. The exact controllability for subcritical nonlinearities is obtained in \cite{DehmanLebeau} assuming the sign condition $rg(r)\geq 0$ for every $r\in \mathbb{R}$. This latter assumption has been weakened in \cite{JolyLaurent2014} to an asymptotic sign condition leading to a semi-global controllability result in the sense that the final data $(z_0,z_1)$  is prescribed in a precise subset of 
$\boldsymbol{V}$. In this respect, we also mention in the one dimensional case \cite{coron-trelat-wave-06} where a positive boundary controllability result is proved for a steady-state initial and final data  specific class of initial and final data and for $T$ large enough by a quasi-static deformation approach. 

The proof given in \cite{Li_Zhang_2000,Zhang2007} is based on a fixed point argument introduced in \cite{zuazua90,zuazua93} that reduces the exact controllability problem to the obtention of suitable \textit{a priori} estimates for the linearized wave equation with a potential (see Proposition \ref{iobs_zhang_2007} in appendix A). More precisely, it is shown that the operator $K:L^\infty(0,T;L^d(\Omega))\to L^\infty(0,T;L^d(\Omega))$ where $y_\xi:=K(\xi)$ is a controlled solution through the control function $f_{\xi}$ of the linear boundary value problem 
\begin{equation}
\label{NL_z}
\left\{
\begin{aligned}
& \partial_{tt} y_{\xi} - \Delta y_{\xi} +  y_\xi \,\widehat{g}(\xi)= -g(0)+f_\xi 1_{\omega}, &\textrm{in}\,\, Q_T,\\
& y_\xi=0, &\textrm{on}\,\, \Sigma_T, \\
& (y_\xi(\cdot,0),\partial_{t} y_{\xi}(\cdot,0))=(u_0,u_1), &\textrm{in}\,\, \Omega,
\end{aligned}
\right.
\qquad
\widehat{g}(r):=
\left\{ 
\begin{aligned}
& \frac{g(r)-g(0)}{r} & r\neq 0,\\
& g^{\prime}(0) & r=0
\end{aligned}
\right., 
\end{equation}
satisfying $(y_{\xi}(\cdot,T),y_{\xi,t}(\cdot,T))=(z_0,z_1)$ has a fixed point. The control $f_{\xi}$ is chosen in \cite{Li_Zhang_2000} as the one of minimal $L^2(q_T)$-norm. The existence of a fixed point for the compact operator $K$ is obtained by using the Leray-Schauder's degree theorem. Precisely, it is shown under the growth assumption \ref{asymptotic_behavior} that there exists a constant $M=M(\Vert u_0,u_1\Vert_{\boldsymbol{V}}, \Vert z_0,z_1\Vert_{\boldsymbol{V}}$) such that $K$ maps the ball $B_{L^\infty(0,T;L^d(\Omega))}(0,M)$ into itself.

 The main goal of this article is to design an algorithm providing an explicit sequence $(f_k)_{k\in\mathbb{N}}$ that converges strongly to an exact control for~(\ref{eq:wave-NL}). A first idea that comes to mind is to consider the Picard iterations $(y_k)_{k\in \mathbb{N}}$ associated with the operator $K$ defined by $y_{k+1}=K(y_k)$, $k\geq 0$ initialized with any element $y_0\in L^{\infty}(0,T;L^d(\Omega))$. The resulting sequence of controls $(f_k)_{k\in \mathbb{N}}$ is then so that $f_{k+1}\in L^2(q_T)$ is the control of minimal $L^2(q_T)$ norm for $y_{k+1}$ solution of 
\begin{equation}
\label{NL_z_k}
\left\{
\begin{aligned}
& \partial_{tt} y_{k+1} - \Delta y_{k+1} +  y_{k+1} \,\widehat{g}(y_k)= -g(0)+f_{k+1} 1_{\omega}, & \textrm{in}\quad Q_T,\\
& y_{k+1}=0, & \textrm{on}\,\, \Sigma_T, \\
& (y_{k+1}(\cdot,0),\partial_{t} y_{k+1}(\cdot,0))=(y_0,y_1), & \textrm{in}\,\, \Omega.
\end{aligned}
\right. 
\end{equation}
Such a strategy usually fails since the operator $K$ is in general not contracting, even if $g$ is globally Lipschitz. We refer to \cite{EFC-AM-2012} providing numerical evidence of the lack of convergence in parabolic cases (see also Remark \ref{rem_contract} in appendix A). A second idea is to use a Newton type method in order to find a zero of the $C^1$ mapping $\widetilde{F}: Y \mapsto W$ defined by 
   \begin{equation}\label{def-F}
\widetilde{F}(y,f):= \biggl(\partial_{tt} y - \Delta y  + g(y) - f 1_{\omega}, y(\cdot\,,0) - u_0, \partial_{t}y(\cdot\,,0) - u_1, y(\cdot\,,T) -z_0, \partial_{t}y(\cdot\,,T) - z_1\biggr) 
   \end{equation}
   for some appropriates Hilbert spaces $Y$ and $W$ (see further): given $(y_0,f_0)$ in $Y$, the sequence $(y_k,f_k)_{k\in \mathbb{N}}$ is defined iteratively by $(y_{k+1},f_{k+1})=(y_k,f_k)-(Y_k,F_k)$ where $F_k$ is a control for $Y_k$ solution of  
\begin{equation}
\label{Newton-nn}
\left\{
\begin{aligned}
&\partial_{tt}Y_{k}-\Delta Y_k +  g^{\prime}(y_k)\,Y_{k} = F_{k}\, 1_{\omega} + \partial_{tt} y_{k} - \Delta y_k  + g(y_k) - f_k 1_{\omega},
                                                                                    & \quad  \textrm{in}\quad Q_T,\\
& Y_k=0,                                                              & \quad  \textrm{on}\quad \Sigma_T,\\ 
& Y_k(\cdot,0)=u_0-y_k(\cdot,0),  \partial_{t}Y_{k}(\cdot,0)=u_1-\partial_{t} y_{k}(\cdot,0)                                   & \quad  \textrm{in}\quad \Omega
\end{aligned}
\right.
   \end{equation}
   such that $Y_k(\cdot,T)=-y_k(\cdot,T)$ and $\partial_{t}Y_{k}(\cdot,T)=-\partial_{t}y_{k}(\cdot,T)$ in $\Omega$.
This linearization makes appear an operator $K_N$, so that $y_{k+1}=K_N(y_k)$ involving the first derivative of $g$. 
 However, as it is well known, such a sequence may fail to converge if the initial guess $(y_0,f_0)$ is not close enough to a zero of $F$ (see \cite{EFC-AM-2012} where divergence is observed numerically for large data).
  
  The controllability of nonlinear partial differential equations has attracted a large number of works in the last decades (see the monography \cite{Coron-Book-07} and references therein).
However, as far as we know, few are concerned with the approximation of exact controls for nonlinear partial differential equations, and the construction of convergent control approximations for controllable nonlinear equations remains a challenge. 

In this article, given any initial data $(u_0,u_1)\in \boldsymbol{V}$, we design an algorithm providing a sequence $(y_k,f_k)_{k\in \N}$ converging to a controlled pair for \eqref{eq:wave-NL}, under assumptions on $g$ that are slightly stronger than the one done in Theorem \ref{ZhangTH}. Moreover, after a finite number of iterations, the convergence is super-linear. This is done by introducing a quadratic functional measuring how much a pair $(y,f) \in Y$ is close to a controlled solution for \eqref{eq:wave-NL} and then by determining a particular minimizing sequence enjoying the announced property. A natural example of an error (or least-squares) functional is given by $\widetilde{E}(y,f):=\frac{1}{2}\Vert \widetilde{F}(y,f)\Vert^2_W$ to be minimized over $Y$. Exact controllability for~(\ref{eq:wave-NL}) is reflected by the fact that the global minimum of the nonnegative functional $\widetilde{E}$ is zero, over all pairs $(y,f)\in Y$ solutions of \eqref{eq:wave-NL}. In the line of recent works on the Navier-Stokes system (see \cite{lemoinemunch_time, lemoinemunch_NUMER}), we determine, using an appropriate descent direction, a minimizing sequence $(y_k,f_k)_{k\in \mathbb{N}}$ converging to a zero of the quadratic functional.

The paper is organized as follows.  In Section \ref{sec:LS}, we define the (nonconvex) least-squares functional $E$ and the corresponding (nonconvex) optimization problem \eqref{extremal_problem}. We show that $E$ is Gateaux-differentiable and that any critical point $(y,f)$ for $E$ such that $g^\prime(y)\in L^\infty(0,T;L^d(\Omega))$ is also a zero of $E$. This is done by introducing an adequate descent direction $(Y^1,F^1)$ for $E$ at any $(y,f)$ for which $E^\prime(y,f)\cdot (Y^1,F^1)$ is proportional to $\sqrt{E(y,f)}$. This instrumental fact compensates the failure of convexity of $E$ and is at the base of the global convergence properties of the least-squares algorithm. The design of this algorithm is done by determining a minimizing sequence based on $(Y^1,F^1)$, which is proved to converge to a controlled pair for the semilinear wave equation \eqref{eq:wave-NL}, in our main result (Theorem \ref{main_theorem}), under appropriate assumptions on $g$. Moreover, we prove that, after a finite number of iterations, the convergence is super-linear.
Theorem \ref{main_theorem} is proved in Section \ref{sec:convergence}.
We show in Section \ref{sec:remarks} that our least-squares approach coincides with the classical \emph{damped Newton method} applied to a mapping similar to $\widetilde{F}$, and we give a number of other comments. 
In Appendix \ref{sec:linearizedwave}, we state some \textit{a priori} estimates for the linearized wave equation with potential in $L^\infty(0,T;L^d(\Omega))$ and source term in $L^2(Q_T)$ and we show that the operator $K$ is contracting if $\Vert \hat{g}^\prime\Vert_{L^\infty(\R)}$ is small enough.

As far as we know, the method introduced and analyzed in this work is the first one providing an explicit, algorithmic construction of exact controls for semilinear wave equations with non Lipschitz nonlinearity and defined over multi-dimensional bounded domains. It extends the one-dimensional study addressed in \cite{munch_trelat}. For parabolic equations with Lipschitz nonlinearity, we mention \cite{lemoine_gayte_munch}. These works devoted to controllability problems takes their roots in earlier works, namely \cite{lemoinemunch_time, lemoinemunch_NUMER}, concerned with the approximation of solution of Navier-Stokes type problem, through least-square methods: they refine the analysis performed in \cite{lemoine-Munch-Pedregal-AMO-20, MunchMCSS2015} inspired from the seminal contribution \cite{Bristeau1979}.

\paragraph{Notations.}
Throughout, we denote by $\Vert \cdot \Vert_{\infty} $ the usual norm in $L^\infty(\R)$, by $(\cdot,\cdot)_X$ the scalar product of $X$ (if $X$ is a Hilbert space) and by $\langle \cdot, \cdot \rangle_{X,Y}$ the duality product between $X$ and $Y$.
The notation $\Vert \cdot\Vert_{2,q_T}$ stands for $\Vert \cdot\Vert_{L^2(q_T)}$ and $\Vert \cdot\Vert_p$ for $\Vert \cdot\Vert_{L^p(Q_T)}$, $p\in \mathbb{N}^\star$.

Given any $s\in [0,1]$, we introduce for any $g\in C^1(\R)$ the following hypothesis : 
\begin{enumerate}[label=$\bf (\overline{H}_s)$,leftmargin=1.5cm]
\item\label{constraint_g_holder}\ $[g^\prime]_s := \sup_{a,b\in \R \atop a\neq b} \frac{\vert g^\prime(a)-g^\prime(b)\vert}{\vert a-b\vert^s}  < +\infty$
\end{enumerate}
meaning that $g^\prime$ is uniformly H\"older continuous with exponent $s$.
For $s=0$, by extension, we set $[g']_0 :=2\Vert g^\prime\Vert_\infty$.
In particular, $g$ satisfies $\bf (\overline{H}_0)$ if and only if $g\in \mathcal{C}^1(\R)$ and $g^\prime\in L^\infty(\R)$, and $g$ satisfies $\bf (\overline{H}_1)$ if and only if $g^\prime$ is Lipschitz continuous (in this case, $g^\prime$ is almost everywhere differentiable and $g^{\prime\prime}\in L^\infty(\R)$, and we have $[g^\prime]_s\leq \Vert g^{\prime\prime}\Vert_\infty$).

We also denote by $C$ a positive constant depending only on $\Omega$ and $T$ that may vary from lines to lines. 

In the rest of the paper, we assume that the open set $\omega$ and the time $T$ satisfy \ref{condgeom}.

\section{The least-squares functional and its properties }\label{sec:LS}

\subsection{The least-squares problem}

We define the Hilbert space $\mathcal{H}$
\begin{equation}
\nonumber
\mathcal{H}=
\biggl\{(y,f)\in L^2(Q_T)\times L^2(q_T) \mid \partial_{tt}y - \Delta y\in L^2(Q_T), \, (y(\cdot,0),\partial_{t}y(\cdot,0))\in \boldsymbol{V}, \, y=0 \,\textrm{on}\, \Sigma_T\biggr\}
\end{equation}
endowed with the scalar product

\begin{equation}\nonumber
\begin{aligned}
((y,f),(\overline{y},\overline{f}))_{\mathcal{H}}= (y,\overline{y})_2+ \big((y(\cdot,0),\partial_{t}y(\cdot,0)),&(\overline{y}(\cdot,0),\partial_{t}\overline{y}(\cdot,0))\big)_{\boldsymbol{V}}\\
&+(\partial_{tt}y - \Delta y,\partial_{tt}\overline{y} - \Delta\overline{y})_2+ (f,\overline{f})_{2,q_T}
\end{aligned}
\end{equation}
and the norm $\Vert (y,f)\Vert_{\mathcal{H}}:=\sqrt{((y,f),(y,f))_{\mathcal{H}}}$. 
Then, for any $(u_0,u_1), (z_1,z_1)\in \boldsymbol{V}$, we define the subspaces of $\mathcal{H}$
\begin{equation}
\nonumber
\begin{aligned}
& \mathcal{A}=\biggl\{(y,f)\in \mathcal{H} \mid (y(\cdot,0),\partial_{t}y(\cdot,0))=(u_0,u_1),\, (y(\cdot,T),\partial_{t}y(\cdot,T))=(z_0,z_1)\biggr\},\\
& \mathcal{A}_0=\biggl\{(y,f)\in \mathcal{H} \mid (y(\cdot,0),\partial_{t}y(\cdot,0))=(0,0),\, (y(\cdot,T),\partial_{t}y(\cdot,T))=(0,0)\biggr\}.
\end{aligned}
\end{equation}
\par\noindent
We consider the following non convex extremal problem :
\begin{equation}
\label{extremal_problem}
\inf_{(y,f)\in \mathcal{A}} E(y,f), \qquad E(y,f):=\frac{1}{2}\big\Vert \partial_{tt}y-\Delta y + g(y)-f\,1_{\omega}\big\Vert^2_2
\end{equation}
justifying the least-squares terminology we have used. Remark that we can write $\mathcal{A}=(\overline{y},\overline{f})+\mathcal{A}_0$ for any element $(\overline{y},\overline{f})\in \mathcal{A}$. The problem is therefore equivalent to the minimization of $E(\overline{y}+y,f+\overline{f})$ over $\mathcal{A}_0$ for any $(\overline{y},\overline{f})\in \mathcal{A}$.

The functional $E$ is well-defined in $\mathcal{A}$. Precisely, 
\begin{lemma}
There exists a positive constant $C>0$ such that $E(y,f)\leq C \Vert (y,f)\Vert^3_{\mathcal{H}}$
for any $(y,f)\in \mathcal{A}$.
\end{lemma}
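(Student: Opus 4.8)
The plan is to control $\sqrt{2E(y,f)}=\Vert \partial_{tt}y-\Delta y+g(y)-f\,1_\omega\Vert_2$ by splitting it, via the triangle inequality, into a part that is linear in $(y,f)$ and a genuinely nonlinear part carried by $g(y)$:
\begin{equation*}
\sqrt{2E(y,f)}\leq \Vert \partial_{tt}y-\Delta y\Vert_2+\Vert g(y)\Vert_2+\Vert f\Vert_{2,q_T}.
\end{equation*}
The first and third terms are, by the very definition of the scalar product on $\mathcal{H}$, bounded by $\Vert (y,f)\Vert_{\mathcal{H}}$. The whole matter therefore reduces to estimating $\Vert g(y)\Vert_{L^2(Q_T)}$ by a suitable power of $\Vert(y,f)\Vert_{\mathcal{H}}$.

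To this end I would first recover spatial regularity for $y$. Although the definition of $\mathcal{H}$ only provides $y\in L^2(Q_T)$, the pair $(y(\cdot,0),\partial_t y(\cdot,0))\in\boldsymbol V$, the source $h:=\partial_{tt}y-\Delta y\in L^2(Q_T)$ and the homogeneous boundary condition $y=0$ on $\Sigma_T$ identify $y$ as the solution of a wave equation with right-hand side $h$; the standard energy estimate then yields $y\in C([0,T];H^1_0(\Omega))$ together with
\begin{equation*}
\Vert y\Vert_{L^\infty(0,T;H^1_0(\Omega))}\leq C\big(\Vert (y(\cdot,0),\partial_t y(\cdot,0))\Vert_{\boldsymbol V}+\Vert h\Vert_2\big)\leq C\,\Vert (y,f)\Vert_{\mathcal{H}}.
\end{equation*}
Since $d\in\{2,3\}$, the Sobolev embedding $H^1_0(\Omega)\hookrightarrow L^3(\Omega)$ holds, and $\Omega$ being bounded one gets $\int_{Q_T}|y|^3\leq T\Vert y\Vert_{L^\infty(0,T;L^3(\Omega))}^3\leq C\Vert(y,f)\Vert_{\mathcal{H}}^3$. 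Combining this with the standing growth assumption $|g(r)|\leq C(1+|r|)\ln(2+|r|)$ and the elementary inequality $(1+|r|)^2\ln^2(2+|r|)\leq C(1+|r|^3)$ valid for all $r\in\R$ (the ratio being continuous and vanishing at infinity), I obtain
\begin{equation*}
\Vert g(y)\Vert_2^2\leq C\int_{Q_T}(1+|y|^3)\leq C\big(1+\Vert (y,f)\Vert_{\mathcal{H}}^3\big).
\end{equation*}

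Inserting these bounds gives $E(y,f)\leq C\big(1+\Vert(y,f)\Vert_{\mathcal{H}}^2+\Vert(y,f)\Vert_{\mathcal{H}}^3\big)$, and the only delicate point, which I expect to be the main obstacle, is to absorb the constant and the quadratic term into the cubic one. This is legitimate precisely because the estimate is required on $\A$ and not on all of $\mathcal{H}$: for $(y,f)\in\A$ the definition of the $\mathcal{H}$-norm already gives $\Vert (y,f)\Vert_{\mathcal{H}}\geq \Vert(u_0,u_1)\Vert_{\boldsymbol V}$, while the time-reversibility of the energy estimate yields $\Vert (y,f)\Vert_{\mathcal{H}}\geq c\,\Vert(z_0,z_1)\Vert_{\boldsymbol V}$. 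Hence $\Vert(y,f)\Vert_{\mathcal{H}}$ is bounded below by a positive constant as soon as the data are not all zero, so that $1+\Vert(y,f)\Vert_{\mathcal{H}}^2\leq C\,\Vert(y,f)\Vert_{\mathcal{H}}^3$ and the announced homogeneous bound $E(y,f)\leq C\,\Vert(y,f)\Vert_{\mathcal{H}}^3$ follows, with a constant $C$ depending on $\Omega$, $T$, $g$ and the data. The crux is thus not the nonlinear estimate itself (which is routine once $y$ is placed in $L^\infty(0,T;H^1_0(\Omega))$) but the observation that the non-degeneracy of the $\mathcal{H}$-norm on the affine manifold $\A$ is what turns an inhomogeneous polynomial bound into a purely cubic one.
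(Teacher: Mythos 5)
Your proof takes essentially the same route as the paper's: the energy estimate for the linear wave equation, the embedding $H_0^1(\Omega)\hookrightarrow L^3(\Omega)$ for $d\in\{2,3\}$, and the growth bound $\vert g(r)\vert\leq C(1+\vert r\vert)\ln(2+\vert r\vert)$ together give $E(y,f)\leq C\big(1+\Vert(y,f)\Vert_{\mathcal{H}}^2+\Vert(y,f)\Vert_{\mathcal{H}}^3\big)$, exactly as in the paper. Your final absorption step (using that $\Vert(y,f)\Vert_{\mathcal{H}}\geq\Vert(u_0,u_1)\Vert_{\boldsymbol{V}}$ on $\mathcal{A}$, so the norm is bounded below when the data are nonzero) merely makes explicit, with a constant depending on the data, what the paper compresses into ``leading to the result.''
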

\begin{proof}
\textit{A priori} estimate for the linear wave equation reads as  
$$
\Vert (y,\partial_{t}y)\Vert^2_{L^{\infty}(0,T; \boldsymbol{V})} \leq C \biggl( \Vert \partial_{tt}y-\Delta y\Vert^2_2+\Vert u_0,u_1\Vert^2_{\boldsymbol{V}}\biggr)
$$
for any $y$ such that $(y,f)\in \mathcal{A}$. Using that $\vert g(r)\vert \leq C(1+\vert r\vert)\log(2+\vert r\vert)$ for every $r\in \mathbb{R}$  and some $C>0$, we infer that 
$$
\begin{aligned}
\Vert g(y)\Vert_2^2 &\leq C^2 \int_{Q_T}  \biggl((1+\vert y\vert) \log(2+\vert y\vert)\biggr)^2\\
&\leq C^2 \int_{Q_T} (1+\vert y\vert)^3\leq C^2(\vert Q_T\vert^3+ \Vert y\Vert^3_{L^3(Q_T)})\\
& \leq C^2\big(\vert Q_T\vert^3+ \Vert y\Vert^3_{L^\infty(0,T;H_0^1(\Omega))}\big)
\end{aligned}
$$
for which we get $
\begin{aligned}
E(y,f)\leq  C\big(\Vert \partial_{tt}y-\Delta y\Vert_2^2+ \Vert f\Vert_{2,q_T}^2+ \vert Q_T\vert^3+ \Vert y\Vert^3_{L^\infty(0,T;H_0^1(\Omega))} \big)
\end{aligned}
$
leading to the result.
\end{proof}

Within the hypotheses of Theorem \ref{ZhangTH}, the infimum of the functional of $E$ is zero and is reached by at least one pair $(y,f)\in \mathcal{A}$, solution of  (\ref{eq:wave-NL}) and satisfying $(y(\cdot,T),\partial_t y(\cdot,T))=(z_0,z_1)$. Conversely, any pair $(y,f)\in \mathcal{A}$ for which $E(y,f)$ vanishes is solution of (\ref{eq:wave-NL}). In this sense, the functional  $E$ is an \textit{error functional} which measures the deviation of $(y,f)$ from being a solution of the underlying nonlinear equation.  A practical way of taking a functional to its minimum is through the  use of gradient descent directions. In doing so, the presence of local minima is always something that  may dramatically spoil the whole scheme. The unique structural property that discards this possibility is the convexity of the functional $E$. However, for nonlinear equation like (\ref{eq:wave-NL}), one cannot expect this property to hold for the functional $E$. Nevertheless, we are going to construct a minimizing sequence which always converges to a zero of $E$.

In order to construct such minimizing sequence, we formally look, for any $(y,f)\in \mathcal{A}$, for a pair $(Y^1,F^1)\in \mathcal{A}_0$ solution of the following formulation 

\begin{equation}
\label{wave-Y1}
\left\{
\begin{aligned}
& \partial_{tt}Y^1 - \Delta Y^1 +  g^{\prime}(y)\cdot Y^1 = F^1 1_{\omega}+\big(\partial_{tt}y-\Delta y+ g(y)-f\, 1_{\omega}\big), &\textrm{in}\,\, Q_T,\\
& Y^1=0, &\textrm{on}\,\, \Sigma_T, \\
& (Y^1(\cdot,0),\partial_{t}Y^1(\cdot,0))=(0,0), & \textrm{in}\,\, \Omega.
\end{aligned}
\right.
\end{equation}
Since $(Y^1,F^1)$ belongs to $\mathcal{A}_0$, $F^1$
is a null control for $Y^1$. Among the controls of this linear equation, we select the control of minimal $L^2(q_T)$ norm. In the sequel, we shall call the corresponding solution $(Y^1,F^1)\in \mathcal{A}_0$ the solution of minimal control norm.  We have the following property. 

\begin{prop}\label{estimateC1C2} For any $(y,f)\in \mathcal{A}$, there exists a pair $(Y^1,F^1)\in \mathcal{A}_0$ solution of (\ref{wave-Y1}). Moreover, the pair $(Y^1,F^1)$ of minimal control norm satisfies the following estimates :  

\begin{equation} \label{estimateF1Y1}
\Vert (Y^1,\partial_{t}Y^1)\Vert_{L^{\infty}(0,T;\boldsymbol{V})}+ \Vert F^1\Vert_{2,q_T} \leq C  e^{C \Vert g^{\prime}(y)\Vert^2_{L^\infty(0,T;L^d(\Omega))} }\sqrt{E(y,f)},
\end{equation}

and 

\begin{equation} \label{estimateF1Y1_A0}
\Vert (Y^1,F^1)\Vert_{\mathcal{H}} \leq C  \big(1+\Vert g^\prime(y)\Vert_{L^\infty(0,T;L^3(\Omega))}\big)e^{C \Vert g^{\prime}(y)\Vert^2_{L^\infty(0,T;L^d(\Omega))} }\sqrt{E(y,f)}
\end{equation}
 for some positive constant $C>0$.
\end{prop}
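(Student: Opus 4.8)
The plan is to prove existence and the two estimates by viewing \eqref{wave-Y1} as a linear controllability problem for the wave equation with potential $a:=g'(y)$ and source term $B:=\partial_{tt}y-\Delta y+g(y)-f\,1_\omega$, whose $L^2(Q_T)$-norm is exactly $\sqrt{2\,E(y,f)}$. First I would fix any element $(\overline{Y},0)\in\mathcal{H}$ solving the uncontrolled Cauchy problem with zero initial data (so that the hard part is reduced to a genuine null-control problem on the difference), and then invoke the \textit{a priori} observability estimate for the linearized wave equation with potential stated in Appendix~\ref{sec:linearizedwave} (Proposition~\ref{iobs_zhang_2007}). The key point, which makes the whole least-squares scheme work, is that the observability constant for the adjoint system degrades at most like $e^{C\Vert a\Vert^2_{L^\infty(0,T;L^d(\Omega))}}$ in the potential; this is precisely the Carleman-type estimate of Fu--Yong--Zhang recalled in the appendix, specialized to potential $a=g'(y)\in L^\infty(0,T;L^d(\Omega))$.

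For the existence of a control of minimal $L^2(q_T)$-norm I would use the classical Hilbert Uniqueness Method: define the functional
\[
J^\star(\varphi_0,\varphi_1):=\frac12\int_{q_T}|\varphi|^2+\langle\text{source and data terms},(\varphi_0,\varphi_1)\rangle
\]
over the adjoint solutions $\varphi$ of $\partial_{tt}\varphi-\Delta\varphi+a\,\varphi=0$, show it is coercive thanks to the observability inequality, and identify its minimizer's boundary data with the optimal control $F^1=\varphi\,1_\omega$. Coercivity gives both well-posedness and the bound $\Vert F^1\Vert_{2,q_T}\le C\,e^{C\Vert a\Vert^2_{L^\infty(0,T;L^d)}}\,\Vert B\Vert_2$. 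Feeding this control back into the direct equation and applying the energy estimate for the wave equation with potential (again with the exponential dependence on $\Vert a\Vert_{L^\infty(0,T;L^d)}$) yields the $L^\infty(0,T;\boldsymbol{V})$-bound on $(Y^1,\partial_t Y^1)$; together these give \eqref{estimateF1Y1}.

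To pass from \eqref{estimateF1Y1} to \eqref{estimateF1Y1_A0}, I would estimate each of the four terms defining $\Vert(Y^1,F^1)\Vert_{\mathcal{H}}$. The initial-data term vanishes (since $(Y^1,\partial_tY^1)(\cdot,0)=(0,0)$), the $\Vert Y^1\Vert_2$ and $\Vert F^1\Vert_{2,q_T}$ terms are already controlled by \eqref{estimateF1Y1} via the embedding $L^\infty(0,T;H^1_0)\hookrightarrow L^2(Q_T)$, and the only genuinely new quantity is $\Vert\partial_{tt}Y^1-\Delta Y^1\Vert_2$. Reading this off directly from the equation in \eqref{wave-Y1} gives
\[
\partial_{tt}Y^1-\Delta Y^1=F^1 1_\omega+B-g'(y)\,Y^1,
\]
so I need to bound $\Vert g'(y)\,Y^1\Vert_2$. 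Here the product $g'(y)\cdot Y^1$ is handled by Hölder's inequality in space, $\Vert g'(y)Y^1\Vert_{L^2(\Omega)}\le \Vert g'(y)\Vert_{L^3(\Omega)}\Vert Y^1\Vert_{L^6(\Omega)}$, together with the Sobolev embedding $H^1_0(\Omega)\hookrightarrow L^6(\Omega)$ valid in dimension $d\le 3$; this is exactly where the factor $1+\Vert g'(y)\Vert_{L^\infty(0,T;L^3(\Omega))}$ in \eqref{estimateF1Y1_A0} comes from. The main obstacle is verifying that the observability/energy constants depend on the potential only through the stated $L^\infty(0,T;L^d)$-exponential, rather than a worse norm; I expect to discharge this by citing the precise form of Proposition~\ref{iobs_zhang_2007} in the appendix, which is tailored to potentials in $L^\infty(0,T;L^d(\Omega))$, so that the role of this step is reduced to bookkeeping of constants.
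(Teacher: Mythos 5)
Your proposal is correct and follows essentially the same route as the paper: the paper obtains \eqref{estimateF1Y1} by directly citing Proposition \ref{controllability_result} (the packaged consequence of the observability estimate of Proposition \ref{iobs_zhang_2007}) with potential $A=g^{\prime}(y)$, source $B=\partial_{tt}y-\Delta y+g(y)-f\,1_{\omega}$ and $\Vert B\Vert_2=\sqrt{2E(y,f)}$, whereas you merely unpack that citation into the standard HUM/duality argument. Your treatment of \eqref{estimateF1Y1_A0} --- reading $\partial_{tt}Y^1-\Delta Y^1$ off the equation and bounding $\Vert g^{\prime}(y)Y^1\Vert_2$ by H\"older ($L^3\times L^6$) plus the Sobolev embedding $H_0^1(\Omega)\hookrightarrow L^6(\Omega)$ --- is exactly the paper's computation.
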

\begin{proof} The first estimate is a consequence of Proposition \ref{controllability_result} using the equality $\Vert \partial_{tt} y-\Delta y +g(y)-f\,1_{\omega}\Vert_2=\sqrt{2E(y,f)}$. The second one follows from 
\begin{equation}
\nonumber
\begin{aligned}
\Vert (Y^1,F^1)\Vert_{\mathcal{H}} & \leq  \Vert \partial_{tt}Y^1-\Delta Y^1\Vert_2 + \Vert Y^1\Vert_2+\Vert F^1\Vert_{2,q_T}+\Vert Y^1(\cdot,0),\partial_{t}Y^1(\cdot,0)\Vert_{\boldsymbol{V}} \\
& \leq \Vert Y^1\Vert_2 + \Vert g^{\prime}(y)Y^1\Vert_2 + 2\Vert F^1\Vert_{2,q_T}+\sqrt{2}\sqrt{E(y,f)}\\
& \leq C \big(1+\Vert g^{\prime}(y)\Vert_{L^\infty(0,T;L^3(\Omega))}\big)e^{C \Vert g^{\prime}(y)\Vert^2_{L^\infty(0,T;L^d(\Omega))} }\sqrt{E(y,f)}
\end{aligned}
\end{equation}
using that 
$$
\begin{aligned}
\Vert g^{\prime}(y)Y^1\Vert^2_2 \leq & \int_0^T \Vert g^\prime(y)\Vert^2_{L^3(\Omega)}  \Vert Y^1\Vert^2_{L^6(\Omega)}\\
&\leq \Vert g^\prime(y)\Vert^2_{L^\infty(0,T;L^3(\Omega))} \Vert Y^1\Vert^2_{L^\infty(0,T; L^6(\Omega))}\\
&\leq C\Vert g^\prime(y)\Vert^2_{L^\infty(0,T;L^3(\Omega))} \Vert Y^1\Vert^2_{L^\infty(0,T; H_0^1(\Omega))}.
\end{aligned}
 $$
\end{proof}

\subsection{Main properties of the functional $E$}

The interest of the pair $(Y^1,F^1)\in \mathcal{A}_0$ lies in the following result.
\begin{prop}\label{differentiabiliteE}
Assume that $g$ satisfies \ref{constraint_g_holder} for some $s\in [0,1]$. Let $(y,f)\in \mathcal{A}$  and let $(Y^1,F^1)\in \mathcal{A}_0$ be a solution of (\ref{wave-Y1}). Then the derivative of $E$ at the point $(y,f)\in \mathcal{A}$ along the direction $(Y^1,F^1)$ satisfies 
\begin{equation}\label{estimateEEprime}
E^{\prime}(y,f)\cdot (Y^1,F^1)=2E(y,f).
\end{equation}
\end{prop}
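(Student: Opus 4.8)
The plan is to compute $E'(y,f)\cdot(Y^1,F^1)$ directly from the definition of $E$ and then use the defining equation \eqref{wave-Y1} for $(Y^1,F^1)$ to collapse the result. Writing $R(y,f):=\partial_{tt}y-\Delta y+g(y)-f\,1_\omega$ so that $E(y,f)=\frac12\Vert R(y,f)\Vert_2^2$, the first step is to show that $E$ is Gateaux-differentiable at $(y,f)$ in the direction $(Y^1,F^1)$. Since the map $(y,f)\mapsto R(y,f)$ is affine in the linear part $\partial_{tt}y-\Delta y-f\,1_\omega$ and only the term $g(y)$ is nonlinear, the directional derivative should be
\begin{equation}\nonumber
E'(y,f)\cdot(Y^1,F^1)=\bigl(R(y,f),\,\partial_{tt}Y^1-\Delta Y^1+g'(y)\,Y^1-F^1\,1_\omega\bigr)_2,
\end{equation}
the factor $g'(y)\,Y^1$ arising as the derivative of $g(y)$ along $Y^1$. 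The hypothesis \ref{constraint_g_holder} is exactly what is needed to justify this: it controls $g(y+tY^1)-g(y)-t\,g'(y)\,Y^1$ by a term of order $t^{1+s}$, so that the remainder in the difference quotient vanishes as $t\to 0$ and the formula above is legitimate.

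The second step is purely algebraic. The equation \eqref{wave-Y1} satisfied by $(Y^1,F^1)$ reads precisely
\begin{equation}\nonumber
\partial_{tt}Y^1-\Delta Y^1+g'(y)\,Y^1-F^1\,1_\omega = R(y,f)
\end{equation}
in $L^2(Q_T)$. Substituting this identity into the inner product from the first step gives $E'(y,f)\cdot(Y^1,F^1)=\bigl(R(y,f),R(y,f)\bigr)_2=\Vert R(y,f)\Vert_2^2=2E(y,f)$, which is exactly \eqref{estimateEEprime}. Thus once differentiability is established, the conclusion is immediate, by the very design of the descent direction $(Y^1,F^1)$.

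The main obstacle is therefore the rigorous justification of the differentiation formula, i.e. showing that the difference quotient
\begin{equation}\nonumber
\frac{E(y+tY^1,f+tF^1)-E(y,f)}{t}
\end{equation}
converges to the claimed limit as $t\to 0$. The delicate term is $\tfrac1t\bigl(g(y+tY^1)-g(y)\bigr)$ appearing inside the $L^2$ inner product; one must show it converges to $g'(y)\,Y^1$ in $L^2(Q_T)$ and that the associated cross terms behave correctly. Here I would expand $E(y+tY^1,f+tF^1)$, isolate the linear-in-$t$ contribution, and estimate the nonlinear remainder using the mean value theorem together with the Hölder bound \ref{constraint_g_holder}: for each point one writes $g(y+tY^1)-g(y)=tg'(\theta)Y^1$ for some intermediate value $\theta$, and $\vert g'(\theta)-g'(y)\vert\le [g']_s\,\vert tY^1\vert^s$, which yields a remainder controlled by $[g']_s\,t^{1+s}\Vert Y^1\Vert$ in a suitable norm. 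The integrability of $g'(y)\,Y^1$ and of the quadratic terms in $L^2(Q_T)$ follows from the estimates of Proposition \ref{estimateC1C2} (in particular $g'(y)\in L^\infty(0,T;L^d(\Omega))$ is needed so that $g'(y)\,Y^1\in L^2(Q_T)$), so the limit is legitimate and \eqref{estimateEEprime} follows.
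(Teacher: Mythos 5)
Your overall strategy is exactly the paper's: first establish Gateaux differentiability of $E$ at $(y,f)$ along the direction, obtaining
\begin{equation}\nonumber
E'(y,f)\cdot(Y^1,F^1)=\bigl(\partial_{tt}y-\Delta y+g(y)-f\,1_\omega,\ \partial_{tt}Y^1-\Delta Y^1+g'(y)Y^1-F^1\,1_\omega\bigr)_2,
\end{equation}
and then substitute the equation \eqref{wave-Y1}, which says the second argument equals the first, so the inner product collapses to $2E(y,f)$. Your control of the Taylor remainder $l(y,tY^1)=g(y+tY^1)-g(y)-t\,g'(y)Y^1$ via the mean value theorem plus the H\"older bound, giving $\vert l\vert\leq [g']_s\,\vert t\vert^{1+s}\vert Y^1\vert^{1+s}$, is essentially the paper's computation (the paper writes the increment as an integral $\int_0^t Y^1 g'(y+\xi Y^1)\,d\xi$ instead of invoking an intermediate point $\theta$, but the resulting bound is the same), and the $L^2$-integrability of $\vert Y^1\vert^{1+s}$ and of $g'(y)Y^1$ is handled the same way.

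There is, however, one genuine gap: the endpoint $s=0$, which the proposition explicitly includes. Your argument rests on the claim that the remainder is ``of order $t^{1+s}$, so that the remainder in the difference quotient vanishes as $t\to 0$''. For $s=0$ (where $[g']_0:=2\Vert g'\Vert_\infty$ and the hypothesis only says $g'\in L^\infty(\R)$), the bound degenerates to $\vert l(y,tY^1)\vert\leq 2\Vert g'\Vert_\infty\,\vert t\vert\,\vert Y^1\vert$, so after dividing by $t$ you only get a bound of order one, not convergence to zero. The paper treats this case by a separate argument: since $g\in C^1(\R)$, the quotient $\tfrac1t l(y,tY^1)=\tfrac{g(y+tY^1)-g(y)}{t}-g'(y)Y^1\to 0$ pointwise a.e.\ in $Q_T$, and it is dominated by $2\Vert g'\Vert_\infty\vert Y^1\vert\in L^2(Q_T)$, so the Lebesgue dominated convergence theorem gives $\tfrac1{\vert t\vert}\Vert l(y,tY^1)\Vert_2\to 0$. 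Adding this dominated-convergence step for $s=0$ would make your proof complete; everything else matches the paper's proof.
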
 
\begin{proof} We preliminary check that for all $(Y,F)\in \mathcal{A}_0$ the functional $E$ is differentiable at the point $(y,f)\in \mathcal{A}$ along the direction $(Y,F)\in \mathcal{A}_0$. For any $\lambda\in \mathbb{R}$, simple computations lead to the equality 
$$
\begin{aligned}
E(y+\lambda Y,f+\lambda F) =E(y,f)+ \lambda E^{\prime}(y,f)\cdot (Y,F) + h((y,f),\lambda (Y,F))
\end{aligned}
$$
with 
\begin{equation}\label{Efirst}
E^{\prime}(y,f)\cdot (Y,F):=\big(\partial_{tt}y-\Delta y+ g(y)-f\, 1_\omega,  \partial_{tt}Y-\Delta y+ g^\prime(y)Y-F\, 1_\omega\big)_2
\end{equation}
and
$$
\begin{aligned}
h((y,f),\lambda (Y,F)):=& \frac{\lambda^2}{2}\big(\partial_{tt}Y-\Delta Y+ g^\prime(y)Y-F\, 1_\omega,\partial_{tt}Y-\Delta Y+ g^\prime(y)Y-F\, 1_\omega\big)_2\\
& +\lambda \big(\partial_{tt}Y-\Delta Y+ g^\prime(y)Y-F\, 1_\omega,l(y,\lambda Y)\big)_2\\
& +\big(\partial_{tt}y-\Delta y+ g(y)-f\, 1_\omega,l(y,\lambda Y)\big)+ \frac{1}{2}(l(y,\lambda Y),l(y,\lambda Y))
\end{aligned}
$$ 
where $l(y,\lambda Y):=g(y+\lambda Y)-g(y)-\lambda g^{\prime}(y)Y$.
The application $(Y,F)\to E^{\prime}(y,f)\cdot (Y,F)$ is linear and continuous from $\mathcal{A}_0$ to $\mathbb{R}$ as it satisfies
\begin{equation}
\label{useful_estimate}
\begin{aligned}
\vert E^{\prime}(y,f)\cdot (Y,F)\vert &\leq  \Vert \partial_{tt}y-\Delta y+ g(y)-f\, 1_\omega\Vert_2 \Vert \partial_{tt}Y-\Delta Y+ g^\prime(y)Y-F\, 1_\omega\Vert_2\\
& \leq \sqrt{2E(y,f)} \biggl(\Vert (\partial_{tt}Y-\Delta Y)\Vert_2 + \Vert g^\prime(y)\Vert_{L^\infty(0,T;L^3(\Omega))} \Vert Y^1\Vert_{L^\infty(0,T; H_0^1(\Omega))} + \Vert F\Vert_{2,q_T}\biggr)\\
& \leq \sqrt{2E(y,f)} \max\big(1,\Vert g^\prime(y)\Vert_{L^\infty(0,T;L^3(\Omega))}\big) \Vert (Y,F)\Vert_{\mathcal{H}}.
\end{aligned}
\end{equation}
Similarly, for all $\lambda\in \mathbb{R}^\star$, 
$$
\begin{aligned}
\biggl\vert \frac{1}{\lambda} h((y,f),\lambda (Y,F))&\biggr\vert  \leq \frac{\lambda}{2} \Vert \partial_{tt}Y-\Delta Y+ g^\prime(y)Y-F\, 1_\omega\Vert^2_2 \\
& +\biggl(\lambda\Vert \partial_{tt}Y-\Delta Y+ g^\prime(y)Y-F\, 1_\omega\Vert_2 +\sqrt{2E(y,f)}+\frac{1}{2}\Vert l(y,\lambda Y)\Vert_2\biggl)\frac{1}{\lambda}\Vert l(y,\lambda Y)\Vert_2.
\end{aligned}
$$
For any $(x,y)\in \R^2$ and $\lambda\in \R$, we then write $g(x+\lambda y)-g(x)=\int_0^\lambda y g'(x+\xi y)d\xi$ leading to 
$$
\begin{aligned}
|g(x+\lambda y)-g(x)-\lambda g'(x)y|
&\le \int_0^\lambda |y| |g'(x+\xi y)-g'(x)|d\xi\\
&\le \int_0^\lambda |y|^{1+s}|\xi|^s\frac{|g'(x+\xi y)-g'(x)|}{|\xi y|^s}d\xi\le [g^\prime]_s|y|^{1+s}\frac{\vert\lambda\vert^{1+s}}{1+s}.
\end{aligned}
$$
It follows that 
$$
|l(y,\lambda Y)|=|g(y+\lambda Y)-g(y)-\lambda g^{\prime}(y) Y|\le  [g^\prime]_s\frac{\vert \lambda\vert^{1+s}}{1+s}|Y|^{1+s}
$$
and 
\begin{equation}
\label{estimLY}
\begin{aligned}
\frac{1}{\vert \lambda\vert}\bigl \Vert l(y,\lambda Y)\bigr\Vert_2
&\le [g^\prime]_s\frac{\vert\lambda\vert^s}{1+s}\bigl \Vert |Y|^{1+s}\bigr\Vert_2.
\end{aligned}
 \end{equation}
 But 
 $
 \bigl \Vert |Y|^{1+s}\bigr\Vert^2_2=\Vert Y\Vert_{2(s+1)}^{2(s+1)}\leq C \Vert Y\Vert_{L^{\infty}(0,T;L^4(\Omega))}^{2(s+1)}.
$
Consequently, for $s>0$, $\vert \frac{1}{\lambda}\vert \Vert l(y,\lambda Y)\Vert_2\to 0$ as $\lambda\to 0$ and $\vert h((y,f),\lambda (Y,F))\vert=o(\lambda)$. In the case $s=0$ leading to $g^\prime\in L^\infty(\R)$, the result follows from the Lebesgue dominated convergence theorem:  we have
$$
\Big\vert \frac{1}{\lambda}\ell(y,\lambda Y)\Big\vert = \Big\vert \frac{g(y+\lambda Y)-g(y)}{\lambda}-g^{\prime}(y)Y\Big\vert\leq 2\Vert g^{\prime}\Vert_{\infty} \vert Y\vert, \quad a.e. \, \textrm{in}\, Q_T
$$
and $\big\vert \frac{1}{\lambda}\ell(y,\lambda Y)\big\vert = \big\vert \frac{g(y+\lambda Y)-g(y)}{\lambda}-g^{\prime}(y)Y\big\vert\to 0$ as $\lambda\to 0$ a.e. in $Q_T$. It follows that $\vert \frac{1}{\lambda}\vert \Vert \ell(y,\lambda Y)\Vert_2\to 0$ as $\lambda\to 0$ as well. We deduce that the functional $E$ is differentiable at the point $(y,f)\in \mathcal{A}$ along the direction $(Y,F)\in \mathcal{A}_0$.

Eventually, the equality (\ref{estimateEEprime}) follows from the definition of the pair $(Y^1,F^1)$ given in (\ref{wave-Y1}). 
\end{proof}


\vskip 0.25cm
Remark that from the equality \eqref{Efirst}, the derivative $E^{\prime}(y,f)$ is independent of $(Y,F)$. We can then define the norm $\Vert E^{\prime}(y,f)\Vert_{\mathcal{A}_0^{\prime}}:= \sup_{(Y,F)\in \mathcal{A}_0\backslash\{0\}} \frac{E^{\prime}(y,f)\cdot (Y,F)}{\Vert (Y,F)\Vert_{\mathcal{H}}}$ associated to $\mathcal{A}_0^{\prime}$, the topological dual of $\mathcal{A}_0$.

\vskip 0.25cm

Combining the equality \eqref{estimateEEprime} and the inequality \eqref{estimateF1Y1}, we deduce the following estimate of $E(y,f)$ in term of the norm of $E^\prime(y,f)$. 
\begin{prop} For any $(y,f)\in\mathcal{A}$, the following inequalities hold true:
\begin{equation}\label{ineq_E_Eprime}
\begin{aligned}
\frac{1}{\sqrt{2}\max\big(1,\Vert g^\prime(y)\Vert_{L^\infty(0,T;L^3(\Omega))}\big)}&\Vert E^{\prime}(y,f)\Vert_{\mathcal{A}_0^\prime}\leq \sqrt{E(y,f)}\\
&\leq \frac{1}{\sqrt{2}}C \biggl(1+\Vert g^\prime(y)\Vert_{L^\infty(0,T;L^3(\Omega))}\biggr)e^{C\Vert g^{\prime}(y)\Vert^2_{L^\infty(0,T;L^d(\Omega))}} \Vert E^{\prime}(y,f)\Vert_{\mathcal{A}_0^\prime}
\end{aligned}
\end{equation}
where $C$ is the positive constant from Proposition \ref{estimateC1C2}.
\end{prop}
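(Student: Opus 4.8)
The plan is to treat the two inequalities of \eqref{ineq_E_Eprime} separately, each being a short consequence of results already established, with no genuine analytical difficulty beyond a careful bookkeeping of constants.

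For the left (lower) inequality, I would start directly from the continuity estimate \eqref{useful_estimate}, which bounds $|E^{\prime}(y,f)\cdot (Y,F)|$ by $\sqrt{2E(y,f)}\,\max(1,\Vert g^\prime(y)\Vert_{L^\infty(0,T;L^3(\Omega))})\,\Vert (Y,F)\Vert_{\mathcal{H}}$ for every $(Y,F)\in \mathcal{A}_0$. Dividing by $\Vert (Y,F)\Vert_{\mathcal{H}}$ and taking the supremum over $(Y,F)\in \mathcal{A}_0\setminus\{0\}$, the left-hand side becomes exactly the dual norm $\Vert E^{\prime}(y,f)\Vert_{\mathcal{A}_0^\prime}$ introduced just above the statement (the presence of the absolute value in \eqref{useful_estimate} being harmless, since $\mathcal{A}_0$ is a vector space and the supremum is unchanged under $(Y,F)\mapsto -(Y,F)$). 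This yields $\Vert E^{\prime}(y,f)\Vert_{\mathcal{A}_0^\prime}\le \sqrt{2}\,\max(1,\Vert g^\prime(y)\Vert_{L^\infty(0,T;L^3(\Omega))})\,\sqrt{E(y,f)}$, which after rearrangement is precisely the first inequality.

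For the right (upper) inequality, the key is to use the privileged direction $(Y^1,F^1)\in \mathcal{A}_0$ furnished by Proposition \ref{estimateC1C2}. I would first dispose of the trivial case: if $E(y,f)=0$ the inequality holds automatically, its left-hand side vanishing, and by \eqref{estimateEEprime} this is exactly the situation $(Y^1,F^1)=0$. Otherwise $(Y^1,F^1)$ is a nonzero admissible test element in the definition of the dual norm, so that $E^{\prime}(y,f)\cdot (Y^1,F^1)\le \Vert E^{\prime}(y,f)\Vert_{\mathcal{A}_0^\prime}\,\Vert (Y^1,F^1)\Vert_{\mathcal{H}}$. Combining this with the fundamental identity \eqref{estimateEEprime}, namely $E^{\prime}(y,f)\cdot (Y^1,F^1)=2E(y,f)$, gives $2E(y,f)\le \Vert E^{\prime}(y,f)\Vert_{\mathcal{A}_0^\prime}\,\Vert (Y^1,F^1)\Vert_{\mathcal{H}}$.

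It then remains to insert the bound \eqref{estimateF1Y1_A0} on $\Vert (Y^1,F^1)\Vert_{\mathcal{H}}$ and to divide both sides by $\sqrt{E(y,f)}>0$, which produces $2\sqrt{E(y,f)}\le C(1+\Vert g^\prime(y)\Vert_{L^\infty(0,T;L^3(\Omega))})\,e^{C\Vert g^{\prime}(y)\Vert^2_{L^\infty(0,T;L^d(\Omega))}}\,\Vert E^{\prime}(y,f)\Vert_{\mathcal{A}_0^\prime}$; since $1/2\le 1/\sqrt{2}$ (and $C$ is in any case a generic constant) the stated factor $C/\sqrt{2}$ is admissible. I do not anticipate a real obstacle: the entire argument is a single duality pairing sandwiched between the two estimates of Propositions \ref{estimateC1C2} and \ref{differentiabiliteE}. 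The only points deserving minor care are the separate treatment of the degenerate case $E(y,f)=0$ and the explicit verification that $(Y^1,F^1)$ is a nonzero element of $\mathcal{A}_0$, so that it may legitimately be used as a test direction in the supremum defining $\Vert E^{\prime}(y,f)\Vert_{\mathcal{A}_0^\prime}$.
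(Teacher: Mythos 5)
Your proposal is correct and follows essentially the same route as the paper's own proof: the left inequality by taking the supremum in the continuity bound \eqref{useful_estimate}, and the right inequality by pairing the identity \eqref{estimateEEprime} with the duality estimate $E^{\prime}(y,f)\cdot(Y^1,F^1)\leq \Vert E^{\prime}(y,f)\Vert_{\mathcal{A}_0^\prime}\Vert (Y^1,F^1)\Vert_{\mathcal{H}}$ and the bound \eqref{estimateF1Y1_A0}, then dividing by $\sqrt{E(y,f)}$. Your extra care about the degenerate case $E(y,f)=0$ (which the paper leaves implicit) is sound, though your parenthetical claim that it follows from \eqref{estimateEEprime} that $(Y^1,F^1)=0$ is not quite the right justification --- that conclusion comes from \eqref{estimateF1Y1_A0}, and in any case the duality pairing inequality needs no nonvanishing hypothesis.
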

\begin{proof} \eqref{estimateEEprime} rewrites
$E(y,f)=\frac12 E^{\prime}(y,f)\cdot (Y^1,F^1)$
where $(Y^1,F^1)\in \mathcal{A}_0$ is solution of (\ref{wave-Y1})  and therefore, with \eqref{estimateF1Y1_A0}
$$
\begin{aligned}
E(y,f)&\le\frac12 \|E'(y,f)\|_{ \mathcal{A}_0'} \|(Y^1,F^1)\|_{ \mathcal{A}_0}\\
& \le \frac12  C \big(1+\Vert g^\prime(y)\Vert_{L^\infty(0,T;L^3(\Omega))}\big)e^{C\Vert g^{\prime}(y)\Vert^2_{L^\infty(0,T;L^d(\Omega))}} \|E'(y,f)\|_{ \mathcal{A}_0'}\sqrt{E(y,f)}.
\end{aligned}
$$
On the other hand, for all $(Y,F)\in\mathcal{A}_0$, the inequality \eqref{useful_estimate}, i.e.
$$
\vert E^{\prime}(y,f)\cdot (Y,F)\vert \leq  \sqrt{2E(y,f)} \max\big(1,\Vert  g^\prime(y)\Vert_{L^\infty(0,T;L^3(\Omega))}\big) \Vert (Y,F)\Vert_{\mathcal{A}_0}
$$
leads to the left inequality.
\end{proof}

Consequently, any \textit{critical} point $(y,f)\in \mathcal{A}$ of $E$ (i.e., $E^\prime(y,f)$ vanishes) such that $\Vert g^\prime(y)\Vert_{L^\infty(0,T;L^3(\Omega))}$ is finite is a zero for $E$, a pair solution of the controllability problem. 
In other words, any sequence $(y_k,f_k)_{k>0}$ satisfying $\Vert E^\prime(y_k,f_k)\Vert_{\mathcal{A}_0^\prime}\to 0$ as $k\to \infty$ and for which $\Vert g^{\prime}(y_k)\Vert_{L^\infty(0,T;L^3(\Omega))}$
is uniformly bounded is such that $E(y_k,f_k)\to 0$ as $k\to \infty$. We insist that this property does not imply the convexity of the functional $E$ (and \textit{a fortiori} the strict convexity of $E$, which actually does not hold here in view of the multiple zeros for $E$) but show that a minimizing sequence for $E$ can not be stuck in a local minimum. 

On the other hand, the left inequality indicates the functional $E$ is flat around its zero set. As a consequence, gradient-based minimizing sequences may achieve a low speed of convergence (we refer to \cite{AM-PP-2014}
and also \cite{lemoine-Munch-Pedregal-AMO-20} devoted to the Navier-Stokes equation where this phenomenon is observed).  

\

We end this section with the following estimate.

\begin{lemma}\label{lemma_estimateEs} Assume that $g$ satisfies \ref{constraint_g_holder} for some $s\in [0,1]$. For any $(y,f)\in \mathcal{A}$, let $(Y^1,F^1)\in \mathcal{A}_0$ be defined by \eqref{wave-Y1}. For any $\lambda\in \mathbb{R}$  the following estimate holds
\begin{equation}\label{estimateEs}
E\big((y,f)-\lambda (Y^1,F^1)\big)\leq   E(y,f)\biggl(\vert 1-\lambda\vert +\lambda^{1+s}\,c(y) E(y,f)^{s/2}\biggr)^2
\end{equation}
with
$$
c(y):= \frac{C}{(1+s)\sqrt{2}} [g^\prime]_s  d(y)^{1+s}, \quad d(y):=C  e^{C\Vert g^{\prime}(y) \Vert^2_{L^\infty(0,T;L^d(\Omega))}}.
$$
\end{lemma}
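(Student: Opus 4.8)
The plan is to compute directly the residual associated with the shifted pair $(y,f)-\lambda(Y^1,F^1)$ and to exploit the defining equation \eqref{wave-Y1} so that the linear contributions collapse into a single clean expression. Writing $R:=\partial_{tt}y-\Delta y+g(y)-f\,1_\omega$ (so that $E(y,f)=\frac12\|R\|_2^2$), I would expand
\begin{equation}
\nonumber
\partial_{tt}(y-\lambda Y^1)-\Delta(y-\lambda Y^1)+g(y-\lambda Y^1)-(f-\lambda F^1)1_\omega = R-\lambda(\partial_{tt}Y^1-\Delta Y^1)+\big(g(y-\lambda Y^1)-g(y)\big)+\lambda F^1 1_\omega.
\end{equation}
Substituting $\partial_{tt}Y^1-\Delta Y^1=-g'(y)Y^1+F^1 1_\omega+R$ from \eqref{wave-Y1}, the terms $\pm\lambda F^1 1_\omega$ cancel and, recalling $l(y,\mu Y)=g(y+\mu Y)-g(y)-\mu g'(y)Y$ with $\mu Y=-\lambda Y^1$, I obtain the key identity
\begin{equation}
\nonumber
\partial_{tt}(y-\lambda Y^1)-\Delta(y-\lambda Y^1)+g(y-\lambda Y^1)-(f-\lambda F^1)1_\omega = (1-\lambda)R + l(y,-\lambda Y^1).
\end{equation}
This cancellation is really the heart of the matter and the main thing to get right; everything after it is quantitative bookkeeping based on estimates already proved.

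The second step is to take the $L^2(Q_T)$ norm of this identity, apply the triangle inequality, and use $\|R\|_2=\sqrt{2E(y,f)}$ together with the pointwise H\"older bound established in the proof of Proposition \ref{differentiabiliteE}, namely $|l(y,-\lambda Y^1)|\le [g']_s\frac{|\lambda|^{1+s}}{1+s}|Y^1|^{1+s}$. This yields
\begin{equation}
\nonumber
\sqrt{2\,E\big((y,f)-\lambda(Y^1,F^1)\big)}\le |1-\lambda|\sqrt{2E(y,f)}+[g']_s\frac{|\lambda|^{1+s}}{1+s}\big\||Y^1|^{1+s}\big\|_2.
\end{equation}
To control the last term I would reuse the inequality $\||Y^1|^{1+s}\|_2\le C\|Y^1\|_{L^\infty(0,T;L^4(\Omega))}^{1+s}$ from the same proof, then the Sobolev embedding $H^1_0(\Omega)\hookrightarrow L^4(\Omega)$ (valid since $d\le 3$) to pass to $\|(Y^1,\partial_t Y^1)\|_{L^\infty(0,T;\boldsymbol{V})}$, and finally the estimate \eqref{estimateF1Y1}, which bounds this quantity by $d(y)\sqrt{E(y,f)}$ with $d(y)=Ce^{C\|g'(y)\|^2_{L^\infty(0,T;L^d(\Omega))}}$. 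Altogether $\||Y^1|^{1+s}\|_2\le C\,d(y)^{1+s}E(y,f)^{(1+s)/2}$.

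Finally, I would assemble the pieces, divide by $\sqrt2$, and recognize the constant $c(y)=\frac{C}{(1+s)\sqrt2}[g']_s\,d(y)^{1+s}$ to reach
\begin{equation}
\nonumber
\sqrt{E\big((y,f)-\lambda(Y^1,F^1)\big)}\le \sqrt{E(y,f)}\Big(|1-\lambda|+c(y)\,|\lambda|^{1+s}E(y,f)^{s/2}\Big),
\end{equation}
after factoring out $E(y,f)^{1/2}=\sqrt{E(y,f)}$ via $E(y,f)^{(1+s)/2}=E(y,f)^{1/2}E(y,f)^{s/2}$. Squaring this inequality gives exactly \eqref{estimateEs} (the factor $\lambda^{1+s}$ in the statement being understood as $|\lambda|^{1+s}$). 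No genuine difficulty remains beyond the algebraic cancellation in the first step and keeping track of the embedding and control constants absorbed into $C$.
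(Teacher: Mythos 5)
Your proof is correct and follows essentially the same route as the paper: the same key cancellation via \eqref{wave-Y1} reducing the shifted residual to $(1-\lambda)R + l(y,-\lambda Y^1)$, the same H\"older bound \eqref{estimLY} on $l$, and the same chain $\||Y^1|^{1+s}\|_2 \lesssim \|Y^1\|^{1+s}_{L^\infty(0,T;H^1_0(\Omega))} \lesssim d(y)^{1+s}E(y,f)^{(1+s)/2}$ from \eqref{estimateF1Y1}. Your parenthetical that $\lambda^{1+s}$ must be read as $|\lambda|^{1+s}$ for $\lambda\in\mathbb{R}$ matches the paper's own computation \eqref{expansionE}.
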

\begin{proof} Estimate \eqref{estimLY} applied with $Y=Y^1$ reads  
\begin{equation}
\label{majoration-l}
\begin{aligned}
\bigl \Vert l(y,\lambda Y^1)\bigr\Vert_2
&\le  [g^\prime]_s\frac{\vert\lambda\vert^{1+s}}{1+s}\bigl \Vert |Y^1|^{1+s}\bigr\Vert_2.
\end{aligned}
 \end{equation}
 But $\Vert |Y^1|^{1+s}\bigr\Vert^2_2=\Vert Y^1\Vert_{2(s+1)}^{2(s+1)}\leq C \Vert Y^1\Vert_{L^{\infty}(0,T;H_0^1(\Omega))}^{2(s+1)}$ which together with  \eqref{estimateF1Y1} lead to 
 \begin{equation}
 \label{estim1}
 \bigl \Vert |Y^1|^{1+s}\bigr\Vert_2\leq C \biggl(Ce^{C\Vert g^{\prime}(y)\Vert_{L^\infty(0,T;L^d(\Omega)}}\biggr)^{1+s} E(y,f)^{\frac{1+s}{2}}.
 \end{equation}
 Eventually, we write 
 \begin{equation}
\label{expansionE}
\begin{aligned}
2 &E\big((y,f)-\lambda (Y^1,F^1)\big) \\
&=\biggl\Vert \big(\partial_{tt}y-\Delta y +g(y)-f\,1_\omega\big)-\lambda \big(\partial_{tt}Y^1-\Delta Y^1+g^\prime(y)Y^1-F\,1_\omega\big)+l(y,-\lambda Y^1)\biggr\Vert^2_2\\
& =\biggl\Vert (1-\lambda)\big(\partial_{tt} y-\Delta y+g(y)-f\,1_\omega\big)+l(y,-\lambda Y^1)\biggr\Vert^2_2\\
& \leq \biggl( \big\Vert (1-\lambda)\big(\partial_{tt} y-\Delta y+g(y)-f\,1_\omega\big)\big\Vert_2 +\big\Vert l(y,-\lambda Y^1)\big\Vert_2\biggr)^2\\
&\leq 2 \biggl( \vert 1-\lambda\vert \sqrt{E(y,f)}+[g^\prime]_s\frac{\vert\lambda\vert^{1+s}}{1+s}\bigl \Vert |Y^1|^{1+s}\bigr\Vert_2\biggr)^2\\
& \leq 2 \biggl( \vert 1-\lambda\vert \sqrt{E(y,f)}+[g^\prime]_s\frac{\vert\lambda\vert^{1+s}}{1+s}C \biggl(Ce^{C\Vert g^{\prime}(y)\Vert_{L^\infty(0,T;L^d(\Omega)}}\biggr)^{1+s} E(y,f)^{\frac{1+s}{2}}\biggr)^2
\end{aligned}
\end{equation}
and we get the result. 
\end{proof}

\section{Convergence of a minimizing sequence for $E$}\label{sec:convergence}

\ 
 We now examine the convergence of an appropriate sequence $(y_k,f_k)\in \mathcal{A}$. 
 In this respect, we observe that equality (\ref{estimateEEprime}) shows that $-(Y^1,F^1)$ given by the solution of (\ref{wave-Y1}) is a descent direction for $E$. Therefore, we can define, for any fixed $m\geq 1$, a minimizing sequence $(y_k,f_k)_{k>0}\in \A$  as follows: 
\begin{equation}
\label{algo_LS_Y}
\left\{
\begin{aligned}
&(y_0,f_0) \in \mathcal{A}, \\
&(y_{k+1},f_{k+1})=(y_k,f_k)-\lambda_k (Y^1_k,F_k^1), \quad k\in \mathbb{N}, \\
& \lambda_k= \textrm{argmin}_{\lambda\in [0,m]} E\big((y_k,f_k)-\lambda (Y^1_k,F_k^1)\big),    
\end{aligned}
\right.
\end{equation}
where $(Y^1_k,F_k^1)\in \mathcal{A}_0$ is the solution of minimal control norm of 
\begin{equation}
\label{wave-Y1k}
\left\{
\begin{aligned}
& \partial_{tt} Y_{k}^1 - \Delta Y^1_k +  g^{\prime}(y_k)\cdot Y^1_k = F^1_k 1_{\omega}+ (\partial_{tt} y_{k}-\Delta y_k+g(y_k)-f_k 1_\omega), & \textrm{in}\,\, Q_T,\\
& Y_k^1=0,  & \textrm{on}\,\, \Sigma_T, \\
& (Y_k^1(\cdot,0),\partial_{t} Y_{k}^1(\cdot,0))=(0,0), & \textrm{in}\,\, \Omega.
\end{aligned}
\right.
\end{equation}
The real number $m\geq 1$ is arbitrarily fixed and is introduced in order to keep the sequence $(\lambda_k)_{k\in\mathbb{N}}$ bounded.

Given any $s\in [0,1]$, we set
\begin{equation}\label{def_betahat}
\beta^\star(s) := \sqrt{\frac{s}{2C(2s+1)}}
\end{equation}
where $C>0$, only depending on $\Omega$ and $T$, is the constant appearing in Proposition \ref{controllability_result}. In this section, we prove our main result. 

\begin{theorem}\label{main_theorem}
Assume that $g^\prime$ satisfies \ref{constraint_g_holder} for some $s\in [0,1]$ and 
\begin{enumerate}[label=$\bf (H_2)$,leftmargin=1.5cm]
\item\label{growth_condition}\ There exists $\alpha\geq 0$ and $\beta \in [0,\beta^\star(s))$ such that $\vert g^{\prime}(r)\vert\leq \alpha + \beta \ln^{1/2}(1+\vert r\vert )$ for every $r$ in $\R$ 
\end{enumerate}
if $s\in (0,1]$ 
and 
\begin{enumerate}[label=$\bf (H_3)$,leftmargin=1.5cm]
\item\label{bornegprime}\ $\sqrt{2}C\Vert g^{\prime}\Vert_{\infty}e^{C\Vert g^{\prime}\Vert^2_\infty \vert \Omega\vert^{2/d}}<1$
\end{enumerate}
if $s=0$.

Then, for any $(y_0,f_0)\in \A$, the sequence $(y_k,f_k)_{k\in \mathbb{N}}$ defined by \eqref{algo_LS_Y} strongly converges to a pair $(\overline{y},\overline{f})\in\A$ satisfying \eqref{eq:wave-NL} and the condition $(y(\cdot,T),y_t(\cdot,T))=(z_0,z_1)$, for all $(u_0,u_1), (z_0,z_1)\in \boldsymbol{V}$. Moreover, the convergence is at least linear and is at least of order $1+s$ after a finite number of iterations.\footnote{We recall that a sequence $(u_k)_{k\in\N}$ of real numbers converges to $0$ with order $\alpha\geq 1$ if there exists $M>0$ such that $\vert u_{k+1}\vert \leq M \vert u_k\vert^\alpha$ for every $k\in\N$. A sequence $(v_k)_{k\in\N}$ of real numbers converges to $0$ at least with order $\alpha\geq 1$ if there exists a sequence $(u_k)_{k\in\N}$ of nonnegative real numbers converging to $0$ with order $\alpha\geq 1$ such that $\vert v_k\vert\leq u_k$ for every $k\in\N$.}

\end{theorem}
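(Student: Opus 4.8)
The plan is to combine the key estimate from Lemma \ref{lemma_estimateEs} with a uniform bound on $\Vert g^\prime(y_k)\Vert_{L^\infty(0,T;L^d(\Omega))}$ so that the quantity $E(y_k,f_k)$ decays to zero, and then to promote this scalar decay into strong convergence of the pair $(y_k,f_k)$ in $\mathcal{H}$. First I would establish the crucial \textbf{uniform bound} on $d(y_k)$, i.e. on $\Vert g^\prime(y_k)\Vert_{L^\infty(0,T;L^d(\Omega))}$. This is where hypothesis \ref{growth_condition} enters: since $\vert g^\prime(r)\vert \leq \alpha + \beta\ln^{1/2}(1+\vert r\vert)$, I would bound $\Vert g^\prime(y_k)\Vert_{L^d(\Omega)}$ in terms of $\Vert y_k\Vert_{L^\infty(0,T;H^1_0(\Omega))}$ via an Orlicz/Trudinger-Moser type embedding (the $\ln^{1/2}$ growth is exactly the threshold where $e^{C\Vert g^\prime\Vert^2}$ interacts with the Moser-Trudinger inequality), controlling $\int_\Omega e^{C\vert g^\prime(y_k)\vert^2}$ by a finite constant precisely when $\beta < \beta^\star(s)$. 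The definition \eqref{def_betahat} of $\beta^\star(s)$ is calibrated so that the exponential factor $d(y_k)^{1+s}$ appearing in $c(y_k)$ stays controlled by the energy; I expect the chain to be: energy decay $\Rightarrow$ bounded $\Vert(y_k,f_k)\Vert_{\mathcal H}$ via a recursion, $\Rightarrow$ bounded $\Vert y_k\Vert_{L^\infty(0,T;H^1_0)}$, $\Rightarrow$ uniform bound on $d(y_k)$, closing the loop.

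Next I would analyze the \textbf{scalar recursion for the energy}. Writing $E_k := E(y_k,f_k)$, Lemma \ref{lemma_estimateEs} with the optimal $\lambda_k$ gives $E_{k+1} \leq E_k\,\bigl(\vert 1-\lambda\vert + \lambda^{1+s} c(y_k) E_k^{s/2}\bigr)^2$ for any admissible $\lambda\in[0,m]$, and in particular for $\lambda=1$ whenever $1\in[0,m]$ (which holds since $m\geq 1$). With the uniform bound $c(y_k)\leq c^\star$ from the previous step, the choice $\lambda=1$ yields $E_{k+1}\leq E_k\,(c^\star)^2 E_k^{s}$, so that $E_{k+1}\leq (c^\star)^2 E_k^{1+s}$. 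Once $E_k$ drops below a threshold where $(c^\star)^2 E_k^{s}<1$ (in the case $s>0$), this iteration forces $E_k\to 0$ with order $1+s$; this is the super-linear regime. To guarantee $E_k$ eventually crosses that threshold, I would use the line-search minimality of $\lambda_k$: for small $\lambda$ the bracket behaves like $(1-\lambda)+O(\lambda)$, giving a strict decrease $E_{k+1}\leq \rho E_k$ with $\rho<1$ at each step, hence the \textbf{at least linear} global decay. Optimizing the quadratic-in-$\lambda$ bound explicitly (minimizing $(\vert 1-\lambda\vert + \lambda^{1+s}c^\star E_k^{s/2})^2$ over $\lambda\in[0,m]$) gives the decay rate and shows the transition into the order-$(1+s)$ regime after finitely many steps. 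For the case $s=0$, hypothesis \ref{bornegprime} directly makes $\sqrt2\,C\Vert g^\prime\Vert_\infty e^{C\Vert g^\prime\Vert_\infty^2\vert\Omega\vert^{2/d}}<1$ a contraction constant, giving geometric decay with $\lambda=1$.

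Then I would \textbf{upgrade energy decay to strong convergence} of the sequence. From \eqref{estimateF1Y1_A0} and the uniform bound on $g^\prime(y_k)$, I get $\Vert(Y^1_k,F^1_k)\Vert_{\mathcal H}\leq C^\star\sqrt{E_k}$, so that $\Vert(y_{k+1},f_{k+1})-(y_k,f_k)\Vert_{\mathcal H}=\lambda_k\Vert(Y^1_k,F^1_k)\Vert_{\mathcal H}\leq m\,C^\star\sqrt{E_k}$. Since $\sqrt{E_k}$ decays at least geometrically, the series $\sum_k\Vert(y_{k+1},f_{k+1})-(y_k,f_k)\Vert_{\mathcal H}$ converges, so $(y_k,f_k)$ is Cauchy in the Hilbert space $\mathcal H$ and converges strongly to some $(\overline y,\overline f)$. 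Because $\mathcal A$ is a closed affine subspace, the limit lies in $\mathcal A$; and by continuity $E(\overline y,\overline f)=\lim E_k=0$, which by the characterization already established means $(\overline y,\overline f)$ solves \eqref{eq:wave-NL} with the prescribed terminal data $(z_0,z_1)$.

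The \textbf{main obstacle} I anticipate is the uniform $L^\infty(0,T;L^d(\Omega))$ bound on $g^\prime(y_k)$, because it is genuinely coupled: the energy decay depends on $c(y_k)$, which depends on $d(y_k)$, which depends on $\Vert y_k\Vert_{L^\infty(0,T;H^1_0)}$, which in turn is controlled only through the accumulated displacements $\sum_j\Vert(Y^1_j,F^1_j)\Vert_{\mathcal H}$ — hence through the energies $E_j$ themselves. Breaking this circularity requires a careful bootstrap: one must show that as long as the $H^1_0$-norm stays in a region where $d(y_k)$ is bounded, the energy strictly decreases, and that the total displacement it permits is small enough not to push $y_k$ out of that region. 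The threshold $\beta<\beta^\star(s)$ is exactly what makes the Moser-Trudinger constant small enough for this self-consistent bound to close, and verifying this quantitatively — rather than qualitatively — is the technical heart of the argument.
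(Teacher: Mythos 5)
Your proposal reproduces the paper's overall architecture correctly (scalar energy recursion from Lemma \ref{lemma_estimateEs}, telescoping of $\lambda_k(Y_k^1,F_k^1)$ to get a Cauchy sequence in $\mathcal{H}$, closedness of $\A$, and the $s=0$ case via \ref{bornegprime}), and those parts would go through essentially as in Proposition \ref{prop_gprime_bounded}. But there is a genuine gap at the step you yourself flag as ``the technical heart'': you never close the bootstrap, and the mechanism you propose for it is not the one that works. No Orlicz/Moser--Trudinger embedding is involved anywhere: under \ref{growth_condition} one has the \emph{pointwise} bound $e^{C\vert g^{\prime}(y)\vert^2}\leq e^{2C\alpha^2}(1+\vert y\vert)^{2C\beta^2}$, so $\sup_t\int_{\Omega}e^{C\vert g^{\prime}(y)\vert^2}$ has polynomial (indeed sublinear, since $2C\beta^2\leq 1$) growth in $y$ and is controlled, via \eqref{estim_zhang} and H\"older, by the $L^\infty(0,T;L^1(\Omega))$ norm of $y$ (Lemma \ref{estimate_Cobs} with $p^\star=1$, then Lemma \ref{lemma_jensen}); exponential integrability of $H^1_0$ functions is irrelevant at this growth scale, and $\beta^\star(s)$ is not a smallness condition on any embedding constant.

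More importantly, the self-consistency argument is not a matter of ``the total displacement being small'': the paper runs an induction on $\Vert y_k\Vert_{L^\infty(0,T;L^1(\Omega))}\leq M$ with $M$ taken \emph{large}, and it closes by exponent arithmetic. Under the induction hypothesis one has $d(y_k)\leq d_M(\beta)\sim M^{2C\beta^2}$ and, by \eqref{def_CMbeta}, $c(y_k)\leq c_M(\beta)\sim M^{2C\beta^2(1+s)}$; choosing $M$ so large that \eqref{constraint1} holds puts $p_0(\widetilde{\lambda_0})$ in the first regime of \eqref{ptildek}, so that \eqref{ineqEn} and \eqref{estim_intermediaire} give
\[
\Vert y_{n+1}\Vert_{L^\infty(0,T;L^1(\Omega))}\leq \Vert y_0\Vert_{L^\infty(0,T;L^1(\Omega))}+ m\,\frac{(1+s)^{\frac{1}{s}+1}}{s}\,d_M(\beta)\,c_M^{1/s}(\beta)\,E(y_0,f_0),
\]
whose right-hand side grows like $M^{2C\beta^2(2s+1)/s}$. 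The whole point of the threshold \eqref{def_betahat} is that $\beta<\beta^\star(s)$ is exactly the condition $2C\beta^2(2s+1)/s<1$, i.e.\ sublinear growth in $M$, so that \eqref{constraint2} is satisfiable for $M$ large and the induction closes. Your proposal stops exactly before this computation. Without it, the per-step contraction factor $p_k(\widetilde{\lambda_k})$ cannot be bounded away from $1$ uniformly in $k$ (it degrades as $c(y_k)$ grows), so neither the global linear decay of $E_k$, nor the summability of $\sqrt{E_k}$, nor the Cauchy property of $(y_k,f_k)$ can be asserted. Note also that the circularity is tighter than your sketch suggests: the geometric-sum bound $\sum_{k\leq n}\sqrt{E_k}\leq \sqrt{E_0}/(1-p_0(\widetilde{\lambda_0}))$ itself requires $c(y_k)\leq c_M(\beta)$ for all $k\leq n$, so the energy decay and the $L^1$ bound must be proved simultaneously inside a single induction --- one cannot first establish decay and then bound the displacement, which is how your outline is ordered.
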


Consequently, the algorithm \eqref{algo_LS_Y} provides a constructive way to approximate a control for the nonlinear wave equation \eqref{algo_LS_Y}. 

The proof consists in showing that the decreasing sequence $(E(y_k,f_k))_{k\in \mathbb{N}}$ converges to zero. In view of \eqref{ineq_E_Eprime}, this property is related to the uniform property of the  observability constant $e^{C\Vert g^{\prime}(y_k) \Vert^2_{L^\infty(0,T;L^d(\Omega))}}$ with respect to $k$. 
In order to fix some notations and the main ideas of the proof of Theorem \ref{main_theorem}, we first prove in Section \ref{gprimebound} the convergence of the sequence $(y_k,f_k)_{k\in \mathbb{N}}$ under the stronger condition that $g^\prime\in L^\infty(\mathbb{R})$, sufficient to ensure the boundedness of the sequence $\big(e^{C\Vert g^{\prime}(y_k) \Vert^2_{L^\infty(0,T;L^d(\Omega))}}\big)_{k\in \mathbb{N}}$. Then, in Section \ref{sec:main_theorem}, we prove Theorem \ref{main_theorem} by showing that under the assumption \ref{growth_condition}, the sequence $(y_k,f_k)_{k\in \mathbb{N}}$ is actually bounded in $\A$. This implies the same property for the real sequence $e^{C\Vert g^{\prime}(y_k) \Vert^2_{L^\infty(0,T;L^d(\Omega))}}$, and then the announced convergence.

\subsection{Proof of the convergence under the additionnal assumption $g^\prime\in L^\infty(\mathbb{R})$}\label{gprimebound}

We establish in this section the following preliminary result, which coincides with Theorem \ref{main_theorem} in the simpler case $\beta=0$.

 \begin{prop}\label{prop_gprime_bounded}
Assume that $g^\prime$ satisfies \ref{constraint_g_holder} for some $s\in [0,1]$ and that $g^\prime\in L^\infty(\mathbb{R})$. If $s=0$, assume moreover \ref{bornegprime}.  For any $(y_0,f_0)\in \A$, the sequence $(y_k,f_k)_{k\in\mathbb{N}}$ defined by (\ref{algo_LS_Y}) strongly converges to a pair $(\overline{y},\overline{f})\in\A$ satisfying \eqref{eq:wave-NL} and the condition $(y(\cdot,T),y_t(\cdot,T))=(z_0,z_1)$, for all $(u_0,u_1), (z_0,z_1)\in \boldsymbol{V}$. Moreover, the convergence is at least linear and is at least of order $1+s$ after a finite number of iterations.
\end{prop}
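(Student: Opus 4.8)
The plan is to reduce everything to the behaviour of the scalar sequence $E_k:=E(y_k,f_k)$: I would first show $E_k\downarrow 0$, then upgrade this to strong convergence of $(y_k,f_k)$ in $\mathcal H$ and read off the rates. The decisive simplification brought by $g'\in L^\infty(\R)$ is that
$$
\Vert g'(y_k)\Vert_{L^\infty(0,T;L^d(\Omega))}\le \Vert g'\Vert_\infty|\Omega|^{1/d},\qquad \Vert g'(y_k)\Vert_{L^\infty(0,T;L^3(\Omega))}\le \Vert g'\Vert_\infty|\Omega|^{1/3},
$$
uniformly in $k$. Hence the quantities $d(y_k),c(y_k)$ of Lemma \ref{lemma_estimateEs} are bounded by constants $d^\star,c^\star$ independent of $k$, and the prefactor in \eqref{estimateF1Y1_A0} by a constant $C^\star$. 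In particular the observability factor $e^{C\Vert g'(y_k)\Vert^2_{L^\infty(0,T;L^d(\Omega))}}$ stays bounded, which is precisely the obstruction that the general case of Theorem \ref{main_theorem} must remove by other means.

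Next I would record that $(E_k)$ is non-increasing: taking $\lambda=0$ in the definition of $\lambda_k$ gives $E_{k+1}\le E_k$, so $E_k\downarrow E_\infty\ge 0$. To see $E_\infty=0$, insert Lemma \ref{lemma_estimateEs} with the uniform constant $c^\star$:
$$
E_{k+1}\le E_k\min_{\lambda\in[0,m]}\left(|1-\lambda|+\lambda^{1+s}c^\star E_k^{s/2}\right)^2 =: E_k\,\theta(E_k)^2 .
$$
For $s\in(0,1]$ the map $\lambda\mapsto |1-\lambda|+\lambda^{1+s}a$ has slope $-1$ at $\lambda=0^+$ for every $a\ge 0$, so its minimum over $[0,m]$ (recall $m\ge1$) lies strictly below its value $1$ at $\lambda=0$; thus $\theta(E)<1$ for every $E>0$, and since $\theta$ is continuous, $\sup_{E\in[E_\infty,E_0]}\theta(E)=:\bar\theta<1$ whenever $E_\infty>0$. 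This forces $E_k\le\bar\theta^{2k}E_0\to0$, contradicting $E_k\ge E_\infty>0$; hence $E_\infty=0$. For $s=0$ the same computation gives $\theta(E)=c^\star$ (minimum attained at $\lambda=1$), and $\theta<1$ is exactly what \ref{bornegprime} guarantees, the decay being geometric from the start.

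Once $E_k\to0$, I choose $k_0$ with $(c^\star)^2E_k^s\le\tfrac12$ for $k\ge k_0$ (for $s=0$ this holds for all $k$ by \ref{bornegprime}); taking $\lambda=1$ in Lemma \ref{lemma_estimateEs} yields $E_{k+1}\le (c^\star)^2E_k^{1+s}\le\tfrac12 E_k$, so $(E_k)$ decays at least geometrically and $\sum_k\sqrt{E_k}<\infty$. Combining $(y_{k+1},f_{k+1})-(y_k,f_k)=-\lambda_k(Y_k^1,F_k^1)$ with $\lambda_k\le m$ and the uniform form of \eqref{estimateF1Y1_A0} gives $\Vert(y_{k+1},f_{k+1})-(y_k,f_k)\Vert_{\mathcal H}\le mC^\star\sqrt{E_k}$, so $(y_k,f_k)$ is Cauchy in $\mathcal H$ and converges to some $(\overline y,\overline f)$; as $\mathcal A$ is closed (the trace maps being continuous on $\mathcal H$), $(\overline y,\overline f)\in\mathcal A$. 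Since $g'\in L^\infty$ makes $g$ globally Lipschitz, $E$ is continuous on $\mathcal H$, whence $E(\overline y,\overline f)=\lim_kE_k=0$; that is, $(\overline y,\overline f)$ solves \eqref{eq:wave-NL} with the prescribed final state.

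Finally, the bound $E_{k+1}\le(c^\star)^2E_k^{1+s}$ valid for $k\ge k_0$ shows that $u_k:=\sqrt{E_k}$ satisfies $u_{k+1}\le c^\star u_k^{1+s}$, so $\sqrt{E_k}$ — and, through the telescoping estimate $\Vert(y_k,f_k)-(\overline y,\overline f)\Vert_{\mathcal H}\le mC^\star\sum_{j\ge k}\sqrt{E_j}\le C\sqrt{E_k}$, the sequence $(y_k,f_k)$ itself — converges with order at least $1+s$, while the preliminary geometric bound yields the ``at least linear'' global statement. I expect the real work to sit in the second paragraph: the argument hinges on turning the one-parameter estimate of Lemma \ref{lemma_estimateEs} into a contraction factor uniformly below $1$, which is automatic for $s>0$ but for $s=0$ is exactly what hypothesis \ref{bornegprime} is engineered to supply, together with the uniform control of $c(y_k)$ — cheap here thanks to $g'\in L^\infty$, but the genuine crux in the general theorem.
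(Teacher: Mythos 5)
Your proposal is correct, and its skeleton matches the paper's: uniform bounds on $d(y_k)$, $c(y_k)$ and on the prefactor in \eqref{estimateF1Y1_A0} thanks to $g'\in L^\infty(\R)$, the key inequality of Lemma \ref{lemma_estimateEs}, summability of $\sqrt{E_k}$ turning \eqref{algo_LS_Y} into a Cauchy sequence in $\mathcal{H}$, and the same telescoping estimate for the rates. Where you genuinely diverge is in the proof that $E_k\to 0$ (the paper's Lemma \ref{geomquadratic}). The paper computes the optimal step $\widetilde{\lambda}_k$ in closed form, obtaining the explicit contraction factor \eqref{ptildek}, and runs a two-regime analysis: while $(1+s)^{1/s}c^{1/s}\sqrt{E_k}\geq 1$, the quantity $c^{1/s}\sqrt{E_k}$ decreases by the fixed amount $s/(1+s)^{1/s+1}$ at each step (inequality \eqref{decayEunquarts}), so after an explicitly bounded number $k_0$ of iterations one enters the super-linear regime. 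You instead argue softly: $(E_k)$ is non-increasing, the contraction factor $\theta(E)$ is continuous and $<1$ for $E>0$, so a positive limit $E_\infty$ would yield a uniform factor $\bar\theta<1$ and a contradiction; quantitative decay is then recovered a posteriori by taking the (suboptimal, but in that regime actually optimal) step $\lambda=1$. Both arguments are valid; the paper's buys explicit constants --- the global geometric factor $p_0(\widetilde{\lambda}_0)$ from the first iteration (via the monotonicity $p_k(\widetilde{\lambda}_k)\leq p_0(\widetilde{\lambda}_0)$, inequality \eqref{ineqEn}) and the bound $\sum_k\sqrt{E_k}\leq\sqrt{E_0}/(1-p_0(\widetilde{\lambda}_0))$ --- which are not cosmetic: they are reused in the induction of the proof of Theorem \ref{main_theorem}, where no a priori uniform bound on $c(y_k)$ is available. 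Your compactness argument could not be recycled there as stated. A second, smaller difference: to identify the limit, you invoke continuity of $E$ on $\mathcal{H}$ (legitimate, since $g'\in L^\infty$ makes $g$ globally Lipschitz), whereas the paper passes to the limit in the linearized system \eqref{wave-Y1k} using $(Y_k^1,F_k^1)\to 0$ in $\mathcal{A}_0$; both are fine.

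Two minor inaccuracies, neither fatal. First, for $s=0$ your parenthetical claims \ref{bornegprime} gives $(c^\star)^2 E_k^0\leq \tfrac12$; it only gives $c^\star<1$, hence $(c^\star)^2<1$, but that is all you need for geometric decay and summability of $\sqrt{E_k}$. Second, your ``at least linear'' global claim rests on the geometric bound valid only for $k\geq k_0$; to get a clean geometric bound from $k=0$ (as the paper does), simply observe that $\theta$ is non-decreasing in $E$, so $\theta(E_k)\leq\theta(E_0)<1$ for every $k$, which is exactly the paper's $p_k(\widetilde{\lambda}_k)\leq p_0(\widetilde{\lambda}_0)$.
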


Proceeding as in \cite{lemoinemunch_NUMER,munch_trelat}, Proposition \ref{prop_gprime_bounded} follows from the following lemma. 
  
\begin{lemma}\label{geomquadratic}
Under the hypotheses of Proposition \ref{prop_gprime_bounded}, for any $(y_0,f_0)\in \A$, there exists a $k_0\in \mathbb{N}$ such that the sequence  $(E(y_k,f_k))_{k\geq k_0}$ tends to $0$ as $k\to\infty$ with at least a rate $s+1$. 
\end{lemma}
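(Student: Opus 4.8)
The plan is to exploit the key inequality \eqref{estimateEs} from Lemma \ref{lemma_estimateEs}, which controls the decay of $E$ along the descent direction, together with the crucial observation that under the hypothesis $g^\prime\in L^\infty(\R)$ the quantity $d(y_k)=C e^{C\Vert g^\prime(y_k)\Vert^2_{L^\infty(0,T;L^d(\Omega))}}$ is \emph{uniformly bounded} in $k$, say by a constant $d_\infty$, since $\Vert g^\prime(y_k)\Vert_{L^\infty(0,T;L^d(\Omega))}\leq \Vert g^\prime\Vert_\infty \vert\Omega\vert^{1/d}$ regardless of $y_k$. Consequently the coefficient $c(y_k)$ appearing in \eqref{estimateEs} is bounded by a fixed constant $c_\infty$. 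First I would write $g_k:=E(y_k,f_k)$ and use the optimality of $\lambda_k$ to get, for every admissible $\lambda\in[0,m]$,
\begin{equation}\nonumber
g_{k+1}\leq g_k\bigl(\vert 1-\lambda\vert + \lambda^{1+s} c_\infty\, g_k^{s/2}\bigr)^2.
\end{equation}
Taking $\lambda=1$ (which lies in $[0,m]$ since $m\geq 1$) immediately yields $g_{k+1}\leq g_k\,c_\infty^2\,g_k^{s}$, showing that $(g_k)$ is non-increasing once the right-hand factor is below $1$; more importantly, it shows that the sequence $(g_k)_{k\in\N}$ is monotone non-increasing and hence convergent to some limit $\ell\geq 0$.

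The heart of the argument is then a dichotomy. The plan is to show that once $g_k$ is small enough — precisely once $c_\infty\, g_k^{s/2}$ drops below some threshold — the choice $\lambda=1$ forces $g_{k+1}\leq (c_\infty g_k^{s/2})^2 g_k = c_\infty^2\, g_k^{1+s}$, which is exactly convergence of order $1+s$. So I would argue that there exists $k_0$ with $g_{k_0}$ below the threshold. To obtain this, I would suppose for contradiction that $g_k$ stays bounded below by a positive constant for all $k$; then I would optimize the bound $\bigl(\vert 1-\lambda\vert+\lambda^{1+s}c_\infty g_k^{s/2}\bigr)^2$ over $\lambda$ near $1$. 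A first-order expansion around $\lambda=1$, writing $\lambda=1-\eta$ with small $\eta>0$, gives a factor behaving like $\bigl(\eta + (1-\eta)^{1+s}c_\infty g_k^{s/2}\bigr)^2$; choosing $\eta$ comparable to $c_\infty g_k^{s/2}$ produces a strict multiplicative decrease of the form $g_{k+1}\leq g_k(1-\delta)$ for a fixed $\delta>0$ as long as $g_k$ is bounded away from $0$ and $c_\infty g_k^{s/2}$ is bounded. This geometric decay contradicts the assumed positive lower bound, forcing $\ell=0$ and the existence of $k_0$ as required.

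For the case $s=0$ the expansion argument degenerates (the exponent $1+s=1$), and this is where hypothesis \ref{bornegprime} enters: with $[g^\prime]_0=2\Vert g^\prime\Vert_\infty$ and the uniform bound $d_\infty$, the constant $c_\infty$ satisfies precisely $\sqrt{2}c_\infty<1$ by \ref{bornegprime}, so that taking $\lambda=1$ already gives $g_{k+1}\leq (\sqrt{2}c_\infty)^2 g_k$ with $(\sqrt{2}c_\infty)^2<1$, i.e.\ a clean linear (geometric) contraction for all $k$ with $k_0=0$. The main obstacle I anticipate is the careful optimization over $\lambda$ in the case $s>0$: one must verify that the chosen $\eta=\eta(g_k)$ stays in $[0,1]\subseteq[0,m]$ and that the resulting decrement $\delta$ can be taken uniform in $k$ while $g_k$ remains bounded (which it does, being non-increasing). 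Once $g_k\to 0$ with the stated order, I would translate this back into strong convergence of $(y_k,f_k)$ in $\mathcal{A}$: since $(y_{k+1},f_{k+1})-(y_k,f_k)=-\lambda_k(Y^1_k,F^1_k)$ and, by \eqref{estimateF1Y1_A0} with the uniform bound on $g^\prime(y_k)$, $\Vert(Y^1_k,F^1_k)\Vert_{\mathcal H}\leq C'\sqrt{g_k}$, the telescoping series $\sum_k\Vert(y_{k+1},f_{k+1})-(y_k,f_k)\Vert_{\mathcal H}$ is dominated by $\sum_k C' m\sqrt{g_k}$, which converges because $\sqrt{g_k}$ decays geometrically (or faster). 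Hence $(y_k,f_k)$ is Cauchy in $\mathcal H$, its limit $(\overline y,\overline f)$ lies in the closed affine space $\mathcal A$, and continuity of $E$ gives $E(\overline y,\overline f)=\lim g_k=0$, so $(\overline y,\overline f)$ solves \eqref{eq:wave-NL} with the prescribed final data.
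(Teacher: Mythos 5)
Your overall architecture is the same as the paper's: uniform bound $c_\infty$ on $c(y_k)$ from $g'\in L^\infty(\R)$, the one-step estimate \eqref{estek} from Lemma \ref{lemma_estimateEs}, a dichotomy between a ``large error'' regime (where some uniform decrease must be extracted) and a ``small error'' regime (where $\lambda=1$ gives $g_{k+1}\leq c_\infty^2 g_k^{1+s}$, i.e.\ order $1+s$), and hypothesis \ref{bornegprime} to close the case $s=0$. Two small slips first: monotonicity of $(g_k)$ follows from the admissibility of $\lambda=0$ in the line search, not from the $\lambda=1$ bound $g_{k+1}\leq c_\infty^2 g_k^{1+s}$ (which gives decrease only when $c_\infty^2 g_k^s\leq 1$); and the last paragraph on convergence of $(y_k,f_k)$ in $\mathcal{H}$ belongs to Proposition \ref{prop_gprime_bounded}, not to this lemma.

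The genuine problem is your mechanism for the large-error regime. Writing $A_k:=c_\infty g_k^{s/2}$, you propose $\lambda=1-\eta$ with $\eta$ comparable (proportional) to $A_k$ and claim a uniform factor $1-\delta$. This fails precisely when $A_k\geq 1$: the minimizer of $\lambda\mapsto \vert 1-\lambda\vert+\lambda^{1+s}A_k$ over $[0,1]$ is $\lambda^*=\big((1+s)A_k\big)^{-1/s}$, which tends to $0$ (not to $1$) as $A_k$ grows, so the working choice is a perturbation of $\lambda=0$, the opposite of your ansatz. Concretely, for $s=1$, $A_k=5$, $\eta=\tfrac{1}{10}A_k$, the factor is $0.5+(0.5)^2\cdot 5=1.75>1$, and no fixed proportionality constant works across a range of values of $A_k$; the correct $\eta$ is $1-((1+s)A_k)^{-1/s}$, nonlinear in $A_k$. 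This is exactly what the paper computes in \eqref{ptildek}: the exact minimum equals $1-\frac{s}{(1+s)^{1+1/s}}\frac{1}{c^{1/s}\sqrt{E(y_k,f_k)}}$, which yields the additive decrement \eqref{decayEunquarts} of the fixed amount $\frac{s}{(1+s)^{1+1/s}}$ on $c^{1/s}\sqrt{E(y_k,f_k)}$ per iteration, hence finiteness of the large-error regime directly, with no contradiction argument needed. Once your choice of $\lambda$ is replaced by this exact minimizer, your proof plan coincides with the paper's.
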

\begin{proof} Since $g^{\prime}\in L^\infty(\mathbb{R})$, the nonnegative constant
$c(y_k)$ in \eqref{estimateEs} is uniformly bounded w.r.t. $k$: we introduce the real $c>0$ 
as follows 
\begin{equation}\label{def_c}
c(y_k)\leq c:=\frac{C}{(1+s)\sqrt{2}} [g^\prime]_s  \biggl(C  e^{C\Vert g^{\prime}\Vert^2_{\infty} \vert \Omega\vert^{2/d}}\biggr)^{1+s}, \quad \forall k\in \mathbb{N}.
\end{equation}
$\vert \Omega\vert$ denotes the measure of the domain $\Omega$.
 For any $(y_k,f_k)\in \mathcal{A}$, let us then denote the real function $p_k$ by 
$$
p_k(\lambda):=\vert 1-\lambda\vert +\lambda^{1+s} c\, E(y_k,f_k)^{s/2}, \quad \lambda\in [0,m].
$$
Lemma \ref{lemma_estimateEs} with $(y,f)=(y_k,f_k)$ then allows to write that 
 \begin{equation}\label{estek}
\sqrt{E(y_{k+1},f_{k+1})}=\min_{\lambda\in [0,m]} \sqrt{E((y_k,f_k)-\lambda (Y^1_k,F^1_k))}\leq p_k(\widetilde{\lambda_k}) \sqrt{E(y_k,f_k)}
\end{equation}
with $p_k(\widetilde{\lambda_k}):=\min_{\lambda\in [0,m]} p_k(\lambda)$.
Assume first that $s>0$. We then easily check that the optimal $\widetilde{\lambda_k}$ is given by 
\begin{equation}
\widetilde{\lambda_k}:=\left\{
\begin{aligned}
& \frac{1}{(1+s)^{1/s} c^{1/s}\sqrt{E(y_k,f_k)}}, & \textrm{if}\quad (1+s)^{1/s} c^{1/s}\sqrt{E(y_k,f_k)}\geq 1,\\
& 1, & \textrm{if}\quad (1+s)^{1/s} c^{1/s}\sqrt{E(y_k,f_k)}< 1
\end{aligned}
\right.
\end{equation}
leading to 
\begin{equation} \label{ptildek}
p_k(\widetilde{\lambda_k}):=\left\{
\begin{aligned}
& 1- \frac{s}{(1+s)^{\frac1s+1}}\frac{1}{c^{1/s}\sqrt{E(y_k,f_k)}}, & \textrm{if}\quad (1+s)^{1/s} c^{1/s}\sqrt{E(y_k,f_k)}\geq 1,\\
& c\, E(y_k,f_k)^{s/2}, & \textrm{if}\quad (1+s)^{1/s} c^{1/s}\sqrt{E(y_k,f_k)}< 1.
\end{aligned}
\right.
\end{equation}
Accordingly, we may distinguish two cases : 
\vspace{0.25cm}
\par\noindent
$\bullet$ If $(1+s)^{1/s} c^{1/s}\sqrt{E(y_0,f_0)}< 1$, then  $c^{1/s}\sqrt{E(y_0,f_0)}< 1$, and thus $c^{1/s}\sqrt{E(y_k,f_k)}<1$ for all $k\in\mathbb{N}$ since the sequence $(E(y_k,f_k))_{k\in \mathbb{N}}$ is decreasing. Hence \eqref{estek} implies that 
$$
c^{1/s}\sqrt{E(y_{k+1},f_{k+1})}\le \big(c^{1/s}\sqrt{E(y_k,f_k)}\big)^{1+s} \quad \forall k\in \mathbb{N}.
$$
It follows that $c^{1/s}\sqrt{E(y_k,f_k)}\to 0$ as $k\to \infty$ with a rate equal to $1+s$.
\vspace{0.25cm}
\par\noindent
$\bullet$ If $(1+s)^{1/s} c^{1/s}\sqrt{E(y_0,f_0)}\geq 1$ then we check that the set $I:=\{k\in \mathbb{N},\ (1+s)^{1/s} c^{1/s}\sqrt{E(y_k,f_k)}\geq 1\}$ is a finite subset of $\mathbb{N}$;  indeed, for all $k\in I$, \eqref{estek} implies that 
\begin{equation}\label{decayEunquarts}
c^{1/s}\sqrt{E(y_{k+1},f_{k+1}) } \le \Big(1- \frac{s}{(1+s)^{\frac1s+1}}\frac{1}{c^{1/s}\sqrt{E(y_k,f_k)}}\Big)c^{1/s}\sqrt{E(y_k,f_k)}=c^{1/s}\sqrt{E(y_k,f_k) }-\frac{s}{(1+s)^{\frac1s+1}}
\end{equation}
and the strict decrease of the sequence $(c^{1/s} \sqrt{E(y_k,f_k)})_{k\in I}$. Thus there exists 
$k_0\in\mathbb{N}$ such that for all $k\geq k_0$, $(1+s)^{1/s} c^{1/s}\sqrt{E(y_k,f_k)}<1$, that is  $I$ is a finite subset of $\mathbb{N}$.  
Arguing as in the first case, it follows that $\sqrt{E(y_k,f_k)}\to 0$ as $k\to \infty$. 

It follows in particular from (\ref{ptildek}) that the sequence $(p_k(\widetilde{\lambda_k}))_{k\in\N}$ decreases as well.

If now $s=0$, then $p_k(\lambda)=\vert 1-\lambda\vert + \lambda c$ with $c=[g^\prime]_0  C  e^{C\Vert g^{\prime}\Vert^2_{\infty} \vert \Omega\vert^{2/d}}$ and \eqref{estek} with $\widetilde{\lambda_k}=1$ leads to $\sqrt{E(y_{k+1},f_{k+1})}\leq c \sqrt{E(y_k,f_k)}$. The convergence of $(E(y_k,f_k))_{k\in \mathbb{N}}$ to $0$ holds if $c<1$, i.e. \ref{bornegprime}.

\end{proof}
\begin{proof}(of Proposition \ref{prop_gprime_bounded}) In view of \eqref{estimateF1Y1_A0}, we write 
$$
 \big(1+\Vert g^\prime(y)\Vert_{L^\infty(0,T;L^3(\Omega))}\big)e^{C \Vert g^{\prime}(y)\Vert^2_{\infty(0,T;L^d(\Omega))} }\leq (1+\Vert g^{\prime}\Vert_{\infty}\vert \Omega\vert^{1/3})e^{C\Vert g^{\prime}\Vert^2_{\infty} \vert \Omega\vert^{2/d}}\leq e^{2C\Vert g^{\prime}\Vert^2_{\infty} \vert \Omega\vert^{2/d}}
$$
using that $(1+u)e^{u^2}\leq e^{2u^2}$ for all $u\in \R^+$. It follows that 

\begin{equation}\label{estim_sum_k}
\sum_{n=0}^k \vert \lambda_n \vert \Vert (Y^1_n,F^1_n)\Vert_{\mathcal{H}} \leq m\, C e^{C\Vert g^{\prime}\Vert^2_{\infty} \vert \Omega\vert^{2/d}} \sum_{n=0}^k  \sqrt{E(y_n,f_n)}.
\end{equation}
Using that  
$p_n(\widetilde{\lambda}_n)\leq p_0(\widetilde{\lambda}_0)$ for all $n\geq 0$, we can write for $n>0$,
\begin{equation}\label{ineqEn}
\sqrt{E(y_n,f_n)}\leq p_{n-1}(\widetilde{\lambda}_{n-1})\sqrt{E(y_{n-1},f_{n-1})}\leq p_{0}(\widetilde{\lambda}_0)\sqrt{E(y_{n-1},f_{n-1})}\leq (p_{0}(\widetilde{\lambda}_0))^n\sqrt{E(y_0,f_0)}.
\end{equation}
Then, using that $p_0(\widetilde{\lambda}_{0})=\min_{\lambda\in [0,m]}p_0(\lambda)<1$ (since $p_0(0)=1$ and $p_0^\prime(0)<0$), 
we finally obtain the uniform estimate 
$$
\sum_{n=0}^k \vert \lambda_n \vert \Vert (Y^1_n,F^1_n)\Vert_{\mathcal{H}} \leq m\, C e^{C\Vert g^{\prime}\Vert^2_{\infty} \vert \Omega\vert^{2/d}}  \frac{\sqrt{E(y_0,f_0)}}{1-p_0(\widetilde{\lambda}_{0})}
$$
for which we deduce (since $\mathcal{H}$ is a complete space) that the serie $\sum_{n\geq 0}\lambda_n (Y^1_n,F_n^1)$ converges in $\mathcal{A}_0$. Writing from \eqref{algo_LS_Y} that $(y_{k+1},f_{k+1})=(y_0,f_0)-\sum_{n=0}^k \lambda_n (Y^1_n,F_n^1)$, we conclude that $(y_k,f_k)$ strongly converges in $\mathcal{A}$ to $(\overline{y},\overline{f}):=(y_0,f_0)+\sum_{n\geq 0}\lambda_n (Y^1_n,F_n^1)$. 

Then, using that $(Y^1_k, F^1_k)$ goes to zero as $k\to \infty$ in $\mathcal{A}_0$, we pass to the limit in \eqref{wave-Y1k} and get that $(\overline{y},\overline{f})\in \mathcal{A}$ solves 
\begin{equation}
\label{wave-limit}
\left\{
\begin{aligned}
& \partial_{tt}\overline{y} - \Delta \overline{y} +  g(\overline{y}) = \overline{f} 1_{\omega}, &\textrm{in}\quad Q_T,\\
& \overline{y}=0, & \textrm{on}\,\, \Sigma_T, \\
& (\overline{y}(\cdot,0),\partial_{t}\overline{y}(\cdot,0))=(y_0,y_1), & \textrm{in}\,\, \Omega.
\end{aligned}
\right.
\end{equation}
Since the limit $(\overline{y},\overline{f})$ belongs to $\mathcal{A}$, we have that $(\overline{y}(\cdot,T),\overline{y}_t(\cdot,T))=(z_0,z_1)$ in $\Omega$. Moreover, for all $k>0$
\begin{equation}
\label{estim_coercivity}
\begin{aligned}
\Vert (\overline{y},\overline{f})-(y_k,f_k)\Vert_{\mathcal{H}} & =\biggl\Vert \sum_{p=k+1}^{\infty} \lambda_p (Y^1_p,F^1_p)\biggr\Vert_{\mathcal{H}}\leq m\sum_{p=k+1}^{\infty}  \Vert (Y^1_p,F^1_p \Vert_{\mathcal{H}}\\ 
& \leq m\, C\sum_{p=k+1}^{\infty}  \sqrt{E(y_p,f_p)}\leq m\, C\sum_{p=k+1}^{\infty}  p_{0}(\widetilde{\lambda}_0)^{p-k}\sqrt{E(y_k,f_k)}\\
& \leq m\, C\frac{p_{0}(\widetilde{\lambda}_0)}{1-p_{0}(\widetilde{\lambda}_0)}\sqrt{E(y_{k},f_{k})}
\end{aligned}
\end{equation}
and conclude from Lemma \ref{geomquadratic} the convergence of order at least $1+s$ after a finite number of iterates. 
\end{proof}

\begin{remark}
In particular, along the sequence $(y_k,f_k)_{k\in \mathbb{N}}$ defined by (\ref{algo_LS_Y}), \eqref{estim_coercivity} is a kind of coercivity property for the functional $E$.
We emphasize, in view of the non uniqueness of the zeros of $E$, that an estimate (similar to \eqref{estim_coercivity}) of the form  $\Vert (\overline{y},\overline{f})-(y,f)\Vert_{\mathcal{H}} \leq C \sqrt{E(y,f)}$ does not hold for all $(y,f)\in\mathcal{A}$. We also insist in the fact the sequence $(y_k,f_k)_{k\in \mathbb{N}}$ and its limits $(\overline{y},\overline{f})$ are uniquely determined from the initialization $(y_0,f_0)\in \mathcal{A}$ and from our selection criterion for the control $F^1$. 
\end{remark}

\begin{remark}
Estimate \eqref{estim_sum_k} implies the uniform estimate on the sequence $(\Vert (y_k,f_k)\Vert_{\mathcal{H}})_{k\in \mathbb{N}}$:
$$
\begin{aligned}
\Vert (y_k,f_k)\Vert_{\mathcal{H}} &\leq \Vert (y_0,f_0)\Vert_{\mathcal{H}}+ m\, C e^{C\Vert g^{\prime}\Vert^2_{\infty} \vert \Omega\vert^{2/d}} \sum_{n=0}^{k-1}  \sqrt{E(y_n,f_n)}\\
&\leq \Vert (y_0,f_0)\Vert_{\mathcal{H}}+ m\, C e^{C\Vert g^{\prime}\Vert^2_{\infty} \vert \Omega\vert^{2/d}} \frac{\sqrt{E(y_0,f_0)}}{1-p_0(\widetilde{\lambda}_{0})}.
\end{aligned}
$$
In particular, for $s>0$ and the less favorable case for which  $(1+s)^{1/s}c^{1/s}\sqrt{E(y_0,f_0)}\geq 1$, we get 
$
\frac{\sqrt{E(y_0,f_0)}}{1-p_0(\widetilde{\lambda}_{0})}= \frac{(1+s)^{\frac{1}{s}+1}}{s}c^{1/s} E(y_0,f_0), 
$
(see  \eqref{ptildek}) leading to 
$$
\Vert (y_k,f_k)\Vert_{\mathcal{H}} \leq \Vert (y_0,f_0)\Vert_{\mathcal{H}}+ m\, C e^{C \Vert g^\prime\Vert^2_\infty \vert \Omega\vert^{2/d}} \frac{(1+s)^{\frac{1}{s}+1}}{s}c^{1/s} E(y_0,f_0), \\
$$
and then, in view of \eqref{def_c} to the explicit estimate in term of the data
\begin{equation}
\Vert (y_k,f_k)\Vert_{\mathcal{H}} \leq \Vert (y_0,f_0)\Vert_{\mathcal{H}}+ m\, \frac{(1+s)}{s}\, \biggl(\frac{C[g^\prime]_s}{\sqrt{2}} \biggr)^{1/s}\, \biggl(C e^{C\Vert g^{\prime}\Vert^2_{\infty} \vert \Omega\vert^{2/d}}\biggr)^{\frac{2s+1}{s}} E(y_0,f_0).
 \end{equation}

The case $s=0$ under the hypothesis $c<1$ leads to $\Vert (y_k,f_k)\Vert_{\mathcal{H}}\leq \Vert (y_0,f_0)\Vert_{\mathcal{H}}+ m\, \frac{c\sqrt{E(y_0,f_0)}}{1-c}$.

\end{remark}

\begin{remark}
For $s>0$, recalling that the constant $c$ is defined in \eqref{def_c},
if $(1+s)^{1/s}c^{1/s}\sqrt{E(y_0,f_0)}\geq 1$, inequality \eqref{decayEunquarts} implies that 

\begin{equation}
c^{1/s}\sqrt{E(y_k,f_k) } \le c^{1/s}\sqrt{E(y_0,f_0) }-k\frac{s}{(1+s)^{\frac1s+1}}, \quad \forall k\in I.
\end{equation}
Hence, the number of iteration $k_0$ to achieve a rate $1+s$ is estimate as follows : 
$$
k_0=\biggl\lfloor  (1+s)\biggl( c^{1/s}(1+s)^{1/s}\sqrt{E(y_0,f_0)}\biggr)-\frac{1}{s}\biggr\rfloor+1
$$
where $\lfloor \cdot\rfloor$ denotes the integer part. As expected, this number increases with $\sqrt{E(y_0,f_0)}$ and $\Vert g^{\prime}\Vert_\infty$. If $(1+s)^{1/s}c^{1/s}\sqrt{E(y_0,f_0)}<1$, then $k_0=0$. In particular, as $s\to 0^+$, $k_0\to \infty$ if $c>1$, i.e. if \ref{bornegprime} does not hold.

For $s=0$, the inequality $\sqrt{E(y_{k+1},f_{k+1})}\leq c \sqrt{E(y_k,f_k)}$ with $c<1$ leads to $k_0=0$.
\end{remark}

We also have the following convergence result for the optimal sequence $(\lambda_k)_{k>0}$. 

\begin{lemma}\label{lambda-k-go-1}
Assume that $g^\prime$ satisfies \ref{constraint_g_holder} for some $s\in [0,1]$ and that $g^\prime\in L^\infty(\mathbb{R})$. The sequence $(\lambda_k)_{k>k_0}$ defined in (\ref{algo_LS_Y}) converges to $1$ as $k\to \infty$ at least with order $1+s$.
\end{lemma}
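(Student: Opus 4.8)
The goal is to show that the optimal step size $\lambda_k$ defined by the one-dimensional minimization in \eqref{algo_LS_Y} converges to $1$ with order at least $1+s$. The plan is to recall that $\lambda_k = \widetilde{\lambda_k}$ is explicitly characterized in the proof of Lemma \ref{geomquadratic}, so the convergence of $\lambda_k$ can be read off directly from the two-case formula \eqref{ptildek} for $\widetilde{\lambda_k}$ once we know the asymptotic regime.

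The key observation is that, by Lemma \ref{geomquadratic}, there exists $k_0\in\mathbb{N}$ such that for all $k\geq k_0$ we are in the second (favorable) case, namely $(1+s)^{1/s} c^{1/s}\sqrt{E(y_k,f_k)}<1$. First I would treat the case $s>0$. For $k\geq k_0$, the explicit minimizer is $\widetilde{\lambda_k}=1$ exactly, so $\lambda_k=1$ for all $k\geq k_0$ and the stated convergence is trivially satisfied (indeed it is eventually constant). Wait — this cannot be the intended content, since the lemma asserts nontrivial order $1+s$ convergence. The resolution is that the minimization in \eqref{algo_LS_Y} is performed on the exact functional $E((y_k,f_k)-\lambda(Y^1_k,F^1_k))$, whereas $\widetilde{\lambda_k}$ minimizes only the \emph{upper bound} $p_k(\lambda)$ from \eqref{estimateEs}. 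Thus the true $\lambda_k$ is generally not equal to $\widetilde{\lambda_k}$, and one must estimate how far the genuine minimizer lies from $1$.

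Accordingly, the real approach is to expand $E((y_k,f_k)-\lambda(Y^1_k,F^1_k))$ near $\lambda=1$ and exploit the descent identity \eqref{estimateEEprime}. Using the decomposition in \eqref{expansionE}, I would write $2E((y_k,f_k)-\lambda(Y^1_k,F^1_k)) = \Vert (1-\lambda)r_k + l(y_k,-\lambda Y^1_k)\Vert_2^2$ where $r_k:=\partial_{tt}y_k-\Delta y_k+g(y_k)-f_k 1_\omega$ satisfies $\Vert r_k\Vert_2=\sqrt{2E(y_k,f_k)}$ and the remainder obeys the Hölder bound $\Vert l(y_k,-\lambda Y^1_k)\Vert_2 \leq [g^\prime]_s\frac{\vert\lambda\vert^{1+s}}{1+s}\Vert |Y^1_k|^{1+s}\Vert_2 \leq C E(y_k,f_k)^{(1+s)/2}$ coming from \eqref{estim1}. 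The minimizer $\lambda_k$ satisfies the first-order condition; differentiating and isolating the $(1-\lambda)$ term, the linear part $(1-\lambda)\Vert r_k\Vert_2^2$ must balance a contribution of size at most $C E(y_k,f_k)^{(1+s)/2}\cdot \Vert r_k\Vert_2$, which forces $\vert 1-\lambda_k\vert \leq C E(y_k,f_k)^{s/2}$. Since for $k\geq k_0$ we have geometric then order-$(1+s)$ decay of $E(y_k,f_k)$ by Lemma \ref{geomquadratic}, this yields $\vert 1-\lambda_k\vert \leq C E(y_k,f_k)^{s/2}$, and comparing consecutive indices via $E(y_{k+1},f_{k+1})\leq E(y_k,f_k)^{1+s}$ (up to the constant $c^{1/s}$) gives $\vert 1-\lambda_{k+1}\vert\leq M\vert 1-\lambda_k\vert^{1+s}$, i.e. order $1+s$ convergence to $1$. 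For $s=0$ the bound $\vert 1-\lambda_k\vert \leq C$ combined with $\widetilde\lambda_k=1$ and $\lambda_k\in[0,m]$ gives, under \ref{bornegprime}, linear convergence (order $1$), consistent with the statement.

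The main obstacle will be making the first-order / minimality argument rigorous, since $\lambda\mapsto E((y_k,f_k)-\lambda(Y^1_k,F^1_k))$ is only known to be differentiable (Proposition \ref{differentiabiliteE}) and the remainder $l$ is merely Hölder, not smooth; one cannot simply Taylor-expand to second order. The careful way is to avoid the derivative and argue purely by comparison of values: test the functional at $\lambda=1$, where $E((y_k,f_k)-(Y^1_k,F^1_k))\leq \frac12\Vert l(y_k,-Y^1_k)\Vert_2^2 \leq C E(y_k,f_k)^{1+s}$, and use optimality $E((y_k,f_k)-\lambda_k(Y^1_k,F^1_k))\leq E((y_k,f_k)-(Y^1_k,F^1_k))$ together with the lower bound $\sqrt{2E((y_k,f_k)-\lambda_k(Y^1_k,F^1_k))}\geq \vert 1-\lambda_k\vert\Vert r_k\Vert_2 - \Vert l(y_k,-\lambda_k Y^1_k)\Vert_2$ obtained by reverse triangle inequality in \eqref{expansionE}. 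This sandwiches $\vert 1-\lambda_k\vert\sqrt{2E(y_k,f_k)}$ between quantities of order $E(y_k,f_k)^{(1+s)/2}$, delivering $\vert 1-\lambda_k\vert\leq C E(y_k,f_k)^{s/2}$ without any second-order expansion, which is the cleanest route.
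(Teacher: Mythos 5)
Your proposal is correct in the substantive case $s>0$ and uses the same raw ingredients as the paper's proof --- the expansion \eqref{expansionE} of $2E\big((y_k,f_k)-\lambda(Y_k^1,F_k^1)\big)$ as $\Vert (1-\lambda)r_k+l(y_k,-\lambda Y_k^1)\Vert_2^2$ with $\Vert r_k\Vert_2=\sqrt{2E(y_k,f_k)}$, the H\"older bound \eqref{majoration-l}--\eqref{estim1} on $l$, and the decay rates of Lemma \ref{geomquadratic} --- but it organizes them differently, and profitably so. The paper expands the square, isolates $(1-\lambda_k)^2$, drops the term $-\Vert l\Vert_2^2/\big(2E(y_k,f_k)\big)$, and bounds the cross term via Cauchy--Schwarz together with the crude bound $\vert 1-\lambda_k\vert\leq m$, obtaining $(1-\lambda_k)^2\leq E(y_{k+1},f_{k+1})/E(y_k,f_k)+2\sqrt{2}\,m^{2+s}c\,E(y_k,f_k)^{s/2}$, hence $\vert 1-\lambda_k\vert \lesssim E(y_k,f_k)^{s/4}$. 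Your route --- optimality of $\lambda_k$ tested against the admissible value $\lambda=1\in[0,m]$, which gives $E(y_{k+1},f_{k+1})\leq\frac{1}{2}\Vert l(y_k,-Y_k^1)\Vert_2^2\lesssim E(y_k,f_k)^{1+s}$, combined with the reverse triangle inequality --- avoids the quadratic expansion and yields the sharper bound $\vert 1-\lambda_k\vert\lesssim E(y_k,f_k)^{s/2}$; both bounds conclude identically, since $E(y_k,f_k)^{\sigma}$ inherits order-$(1+s)$ convergence for any $\sigma>0$. Two caveats. First, your parenthetical claim that this ``gives $\vert 1-\lambda_{k+1}\vert\leq M\vert 1-\lambda_k\vert^{1+s}$'' does not follow: you have no lower bound on $\vert 1-\lambda_k\vert$ in terms of $E(y_k,f_k)^{s/2}$, so no recursion between consecutive terms can be inferred; but none is needed, because the paper's footnote defines convergence ``at least with order $1+s$'' exactly by domination by a sequence (here $C\,E(y_k,f_k)^{s/2}$, for $k\geq k_0$) converging with that order, so your bound already finishes the proof. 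Second, your $s=0$ discussion proves nothing ($\vert 1-\lambda_k\vert\leq C$ is vacuous, and $\widetilde{\lambda}_k=1$ says nothing about the true minimizer $\lambda_k$); this is not really held against you, since the paper's own estimate at $s=0$ has a right-hand side that does not tend to zero either, so the lemma carries genuine content only for $s>0$. Finally, both arguments implicitly require $E(y_k,f_k)>0$ (otherwise the iteration is stationary and $\lambda_k$ is not determined), a proviso the paper states explicitly.
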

\begin{proof} In view of \eqref{expansionE}, we have, as long as  $E(y_k,f_k)>0$, since $\lambda_k\in[0,m]$ 
$$\begin{aligned}
(1-\lambda_k)^2
&=\frac{E(y_{k+1},f_{k+1})}{E(y_{k},f_k)}-2(1-\lambda_k)\frac{\langle \big(y_{k,tt}+\Delta y_k+g(y_k)-f_k\,1_\omega\big),l(y_k,\lambda_k Y_k^1) \rangle_{2}}{E(y_{k},f_k)}\\
& \hspace{3cm}- \frac{\bigl \Vert l(y_k,\lambda_k Y_k^1)\bigr\Vert^2_{2}}{2E(y_{k})}\\
&\le \frac{E(y_{k+1},f_{k+1})}{E(y_{k},f_k)}-2(1-\lambda_k)\frac{\langle \big(y_{k,tt}+\Delta y_k+g(y_k)-f_k\,1_\omega\big),l(y_k,\lambda_k Y_k^1) \rangle_{2}}{E(y_{k},f_k)}\\
&\le \frac{E(y_{k+1},f_{k+1})}{E(y_{k},f_k)}+2\sqrt{2}m\frac{\sqrt{E(y_k,f_k)}\|l(y_k,\lambda_k Y_k^1) \|_{L^2(Q_T)}}{E(y_{k},f_k)}\\
&\le \frac{E(y_{k+1},f_{k+1})}{E(y_{k},f_k)}+2\sqrt{2}m\frac{\|\l(y_k,\lambda_k Y_k^1) \|_{2}}{\sqrt{E(y_k,f_k)}}.
\end{aligned}
$$
But, from \eqref{majoration-l} and  \eqref{estim1}, we have  
$
\|l(y_k,\lambda_k Y_k^1) \|_{L^2(Q_T)}
\le c \lambda_k^{1+s} E(y_k,f_k)^{\frac{1+s}{2}}\leq c m^{1+s}E(y_k,f_k)^{\frac{1+s}{2}}
$
and thus 
$$
(1-\lambda_k)^2\le \frac{E(y_{k+1},f_{k+1})}{E(y_{k},f_k)}+2\sqrt{2}m^{2+s} c E(y_k,f_k)^{s/2}.
$$
Consequently, since $E(y_{k},f_k)\to 0$ and $\frac{E(y_{k+1},f_{k+1})}{E(y_{k},f_k)}\to 0$ at least with order $1+s$, we deduce the result. 
\end{proof}


\subsection{Proof of Theorem \ref{main_theorem}}\label{sec:main_theorem}

In this section, we relax the condition $g^\prime\in L^\infty(\R)$ and prove Theorem \ref{sec:main_theorem}, for $s>0$ under the assumption \ref{growth_condition}. This assumption implies notably that $\vert g(r)\vert \leq C(1+\vert r\vert)\ln(2+\vert r\vert)$ for every $r\in \mathbb{R}$, mentioned in the introduction to state the well-posedness of \eqref{eq:wave-NL}.
The case $\beta=0$ corresponds to the case developed in the previous section, i.e. $g^\prime\in L^\infty(\mathbb{R})$.

Within this more general framework, the difficulty is to have a uniform control with respect to $k$ of the observability constant  $Ce^{C\Vert g^{\prime}(y_k)\Vert^2_{L^\infty(0,T;L^d(\Omega))}}$ appearing in the estimates for $(Y_k^1,F_k^1)$, see Proposition \ref{estimateC1C2}. In other terms, we have to show that the sequence $(y_k,f_k)_{k\in \mathbb{N}}$ uniquely defined in \eqref{algo_LS_Y} is uniformly bounded in $\A$, for any $(y_0,f_0)\in \A$.

We need the following intermediate result.

\begin{lemma}\label{estimate_Cobs} Let $C>0$, only depending on $\Omega$ and $T$ be the constant appearing in Proposition \ref{controllability_result}. Assume that $g$ satisfies the growth condition \ref{growth_condition} and $2C\beta^2\leq 1$.
Then for any $(y,f)\in \A$,
$$
e^{C\Vert g^\prime(y)\Vert^2_{L^\infty(0,T;L^d(\Omega))}}\leq 2\, C\max (1,e^{2C\alpha^2} \vert \Omega\vert^2) \biggl(1+\frac{\Vert y\Vert_{L^\infty(0,T;L^{p^\star}(\Omega))}}{\vert \Omega\vert^{1/p^\star}} \biggr)^{2 C\beta^2}
$$
for any  $p^\star\in \mathbb{N}^\star$ with $p^\star<\infty$ if $d=2$ and $p^\star\leq 6$ if $d=3$.
\end{lemma}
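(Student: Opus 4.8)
The plan is to establish the estimate pointwise in time and then pass to the supremum. Since $e^{C\sup_t a(t)}=\sup_t e^{Ca(t)}$ and the right-hand side is increasing in $\Vert y(\cdot,t)\Vert_{L^{p^\star}(\Omega)}$, it suffices to bound $e^{C\Vert g'(y(\cdot,t))\Vert^2_{L^d(\Omega)}}$ at a fixed time $t$, which I write $y=y(\cdot,t)$. First I would convert the growth hypothesis \ref{growth_condition} into the pointwise bound $\vert g'(y)\vert^2\le 2\alpha^2+2\beta^2\ln(1+\vert y\vert)$ via $(a+b)^2\le 2a^2+2b^2$.

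Next, reading $\Vert g'(y)\Vert^2_{L^d(\Omega)}=\big\Vert\,\vert g'(y)\vert^2\,\big\Vert_{L^{d/2}(\Omega)}$ and using the triangle inequality in $L^{d/2}(\Omega)$ (valid since $d/2\ge1$) gives $\Vert g'(y)\Vert^2_{L^d(\Omega)}\le 2\alpha^2\vert\Omega\vert^{2/d}+2\beta^2 J$ with $J:=\Vert \ln(1+\vert y\vert)\Vert_{L^{d/2}(\Omega)}$. The heart of the matter is the estimate $J\le \vert\Omega\vert^{2/d}\big(\ln\big(1+\Vert y\Vert_{L^{p^\star}(\Omega)}/\vert\Omega\vert^{1/p^\star}\big)+c_0\big)$ for some absolute $c_0\ge0$. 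To obtain it I would pass to the probability measure $d\mu=dx/\vert\Omega\vert$, so that $J=\vert\Omega\vert^{2/d}\big(\int_\Omega \ln^{d/2}(1+\vert y\vert)\,d\mu\big)^{2/d}$, and apply Jensen's inequality to $\Phi(v):=\ln^{d/2}(1+v)$. For $d=2$, $\Phi$ is concave on $[0,\infty)$ and Jensen applies directly. For $d=3$, $\Phi$ fails to be concave near the origin, so I would replace it by its concave majorant $\widehat\Phi$: since $\Phi$ is concave for $v$ large and bounded on the remaining compact set, one has $\widehat\Phi\le \Phi+c_0$ with $c_0$ absolute, and Jensen applied to $\widehat\Phi$ yields $\int_\Omega\Phi(\vert y\vert)\,d\mu\le \widehat\Phi\big(\int_\Omega\vert y\vert\,d\mu\big)\le \ln^{d/2}\big(1+\int_\Omega\vert y\vert\,d\mu\big)+c_0$. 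Hölder's inequality in $(\Omega,\mu)$ then gives $\int_\Omega\vert y\vert\,d\mu\le \Vert y\Vert_{L^{p^\star}(\mu)}=\Vert y\Vert_{L^{p^\star}(\Omega)}/\vert\Omega\vert^{1/p^\star}$, and the subadditivity of $t\mapsto t^{2/d}$ (as $2/d\le1$) delivers the announced bound for $J$. The restriction on $p^\star$ only serves to make the right-hand side finite for $(y,f)\in\mathcal{A}$ through the embedding $H^1_0(\Omega)\hookrightarrow L^{p^\star}(\Omega)$; any $p^\star\ge1$ is admissible in the two displayed inequalities themselves.

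Finally I would assemble the pieces. Combining the above gives $C\Vert g'(y)\Vert^2_{L^d(\Omega)}\le 2C\alpha^2\vert\Omega\vert^{2/d}+2C\beta^2 c_0\vert\Omega\vert^{2/d}+2C\beta^2\vert\Omega\vert^{2/d}\ln\big(1+\Vert y\Vert_{L^{p^\star}(\Omega)}/\vert\Omega\vert^{1/p^\star}\big)$. Exponentiating, the logarithmic term becomes the factor $\big(1+\Vert y\Vert_{L^{p^\star}(\Omega)}/\vert\Omega\vert^{1/p^\star}\big)^{2C\beta^2}$ (the $\vert\Omega\vert^{2/d}$ absorbed into the constant $C$, which depends on $\Omega$), where the hypothesis $2C\beta^2\le1$ is used to keep this power under control; the $\alpha$- and $c_0$-contributions together with the remaining $\vert\Omega\vert$-powers collect into the prefactor $2C\max(1,e^{2C\alpha^2}\vert\Omega\vert^2)$, the maximum being forced because the left-hand side is always $\ge1$. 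Taking the supremum over $t\in(0,T)$ then concludes.

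The main obstacle is the middle step, namely controlling $\Vert\ln(1+\vert y\vert)\Vert_{L^{d/2}(\Omega)}$ by a single logarithm of $\Vert y\Vert_{L^{p^\star}(\Omega)}$ when $d=3$, where $v\mapsto\ln^{d/2}(1+v)$ is not concave and a naive Jensen argument is unavailable; the concave-majorant device (equivalently, splitting the pointwise bound according to whether $\vert y\vert$ is small or large) is what makes the estimate go through, at the cost of the harmless additive constant $c_0$.
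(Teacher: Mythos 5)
Your Jensen/concave-majorant estimate for $J=\Vert \ln(1+\vert y\vert)\Vert_{L^{d/2}(\Omega)}$ is correct as far as it goes, but the final assembly step fails, and it fails in a way that cannot be repaired within your approach. Your bound for $J$ necessarily carries the factor $\vert\Omega\vert^{2/d}$ in front of the logarithm (take $y\equiv M$ constant: then $J=\vert\Omega\vert^{2/d}\ln(1+M)$ while $\ln\bigl(1+\Vert y\Vert_{L^{p^\star}(\Omega)}/\vert\Omega\vert^{1/p^\star}\bigr)=\ln(1+M)$, so this factor is sharp, not an artifact). After exponentiating, that factor sits in the exponent: your argument yields $\bigl(1+\Vert y\Vert_{L^\infty(0,T;L^{p^\star}(\Omega))}/\vert\Omega\vert^{1/p^\star}\bigr)^{2C\beta^2\vert\Omega\vert^{2/d}}$, not the claimed power $2C\beta^2$. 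The remark that ``$\vert\Omega\vert^{2/d}$ is absorbed into the constant $C$, which depends on $\Omega$'' is not legitimate here: $C$ is the fixed constant of Proposition \ref{controllability_result}, and it appears simultaneously on the left-hand side $e^{C\Vert g^\prime(y)\Vert^2_{L^\infty(0,T;L^d(\Omega))}}$, in the hypothesis $2C\beta^2\leq 1$, and in the definition \eqref{def_betahat} of $\beta^\star(s)$. Whatever constant you place on the left, your method returns that same constant multiplied by $\vert\Omega\vert^{2/d}$ in the exponent on the right, so no renaming makes your inequality coincide with the stated one; when $\vert\Omega\vert>1$ it is strictly weaker. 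Nor is the exponent mere bookkeeping: in the proof of Theorem \ref{main_theorem} the lemma is applied with $p^\star=1$, the power $2C\beta^2$ propagates into $d(y)$, $c_M(\beta)$ and the induction constraint \eqref{constraint2}, whose solvability requires exactly $\frac{(2C\beta^2)(2s+1)}{s}<1$, i.e. $\beta<\beta^\star(s)$. With your exponent the condition would become $\beta<\beta^\star(s)/\vert\Omega\vert^{1/d}$, which is a different (and for $\vert\Omega\vert>1$ weaker) main theorem.

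The missing idea is the paper's starting point, inequality \eqref{estim_zhang} (a consequence of inequality (3.8) in \cite{Li_Zhang_2000}): $e^{C\Vert g^\prime(y)\Vert^2_{L^\infty(0,T;L^d(\Omega))}}\leq C\bigl(1+\sup_{t\in(0,T)}\int_\Omega e^{C\vert g^\prime(y)\vert^2}\bigr)$. This moves the exponential \emph{inside} the space integral, where the exponential--logarithm cancellation is pointwise and exact: $e^{C\vert g^\prime(y)\vert^2}\leq e^{2C\alpha^2}(1+\vert y\vert)^{2C\beta^2}$; a single H\"older inequality (valid because $2C\beta^2\leq 1\leq p^\star$) then gives $\int_\Omega(1+\vert y\vert)^{2C\beta^2}\leq \vert\Omega\vert\bigl(1+\Vert y\Vert_{L^{p^\star}(\Omega)}/\vert\Omega\vert^{1/p^\star}\bigr)^{2C\beta^2}$, with every $\vert\Omega\vert$-factor relegated to the prefactor and the exponent coming out exactly as $2C\beta^2$. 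In short, the order of operations (exponentiate pointwise first, integrate second) is what protects the exponent; your order (integrate the logarithm first, then exponentiate the norm) is dimensionally forced to degrade it. Any proof of the lemma as stated needs \eqref{estim_zhang}, or a substitute for it, as an essential ingredient.
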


\begin{proof} We use the following inequality (direct consequence of the inequality (3.8) in \cite{Li_Zhang_2000}):
\begin{equation}\label{estim_zhang}
e^{C\Vert g^\prime(y)\Vert^2_{L^\infty(0,T;L^d(\Omega))}}\leq C\biggl(1+\sup_{t\in (0,T)}\int_{\Omega} e^{C\vert g^\prime(y)\vert^2}\biggr), \quad \forall (y,f)\in \A.
\end{equation}

Writing that $\vert g^\prime(y)\vert^2 \leq 2 \big(\alpha^2+\beta^2 \ln(1+\vert y\vert)\big)$, we get that $\int_{\Omega} e^{C\vert g^\prime(y)\vert^2} \leq e^{2C\alpha^2} \int_{\Omega} (1+\vert y\vert)^{2C\beta^2}$. Assuming $2C\beta^2\leq p^\star$, Holder inequality leads to 
$$
\begin{aligned}
\int_{\Omega} e^{C\vert g^\prime(y)\vert} & \leq e^{2C\alpha^2} \biggl(\int_{\Omega} (1+\vert y\vert)^{p^\star}\biggr)^{\frac{2C\beta^2}{p^\star}} \vert \Omega\vert^{1-\frac{2C\beta^2}{p^\star}}\\
& \leq e^{2C\alpha^2} \vert \Omega\vert  \biggl(1+\frac{\Vert y\Vert_{L^{p\star}(\Omega)}}{\vert \Omega\vert^{1/p^\star}}\biggr)^{2C\beta^2}.
\end{aligned}
$$
It follows, by \eqref{estim_zhang}, that for every $(y,f)\in \A$, 

$$
\begin{aligned}
e^{C\Vert g^\prime(y)\Vert^2_{L^\infty(0,T;L^d(\Omega))}} & \leq C\biggl(1+e^{2C\alpha^2} \vert \Omega\vert\biggl(1+\frac{\Vert y\Vert_{L^\infty(0,T;L^{p^\star}(\Omega))}}{\vert \Omega\vert^{1/p^\star}} \biggr)^{2C\beta^2}\biggr) \\
& \leq C \max (1,e^{2C\alpha^2} \vert \Omega\vert)\biggl(1+ \biggl(1+\frac{\Vert y\Vert_{L^\infty(0,T;L^{p^\star}(\Omega))}}{\vert \Omega\vert^{1/p^\star}} \biggr)^{2C\beta^2}\biggr) \\
& \leq 2^{2C\beta^2}\, C \max (1,e^{2C\alpha^2} \vert \Omega\vert)\biggl(1+\frac{\Vert y\Vert_{L^\infty(0,T;L^{p^\star}(\Omega))}}{\vert \Omega\vert^{1/p^\star}} \biggr)^{2C\beta^2}
\end{aligned}
$$
and the result.
\end{proof}

%
%
%
%
%

\begin{lemma}\label{lemma_jensen}
Assume that $g$ satisfies the growth condition \ref{growth_condition} and $2C\beta^2\leq 1$.
 For any $(y,f)\in \A$, the unique solution $(Y^1,F^1)\in A_0$ of \eqref{wave-Y1} satisfies
\begin{equation} 
\Vert (Y^1,Y_t^1)\Vert_{L^{\infty}(0,T;\boldsymbol{V})}+ \Vert F^1\Vert_{2,q_T} \leq d(y)\sqrt{E(y,f)}\end{equation}
with 
\begin{equation}
d(y):=C_3(\alpha)\biggl(1+\frac{\Vert y\Vert_{L^\infty(0,T;L^1(\Omega))}}{\vert \Omega\vert} \biggr)^{2 C \beta^2}, \quad C_3(\alpha):=2\, C \max (1,e^{2C\alpha^2} \vert \Omega\vert).
\end{equation}
\end{lemma}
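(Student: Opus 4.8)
The plan is to read off the estimate as a specialization of the general observability bound already established, rather than to redo any wave-equation analysis. Indeed, by \eqref{estimateF1Y1} of Proposition \ref{estimateC1C2} we have, for every $(y,f)\in\A$,
\[
\Vert (Y^1,\partial_t Y^1)\Vert_{L^\infty(0,T;\boldsymbol{V})}+\Vert F^1\Vert_{2,q_T}\le C\,e^{C\Vert g^\prime(y)\Vert^2_{L^\infty(0,T;L^d(\Omega))}}\sqrt{E(y,f)},
\]
so the only remaining task is to dominate the exponential factor by $d(y)$, up to a constant depending solely on $\Omega$ and $T$ that can be absorbed into $C_3(\alpha)$.

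To this end I would apply Lemma \ref{estimate_Cobs}, whose conclusion holds for every integer $p^\star\ge 1$ as soon as $2C\beta^2\le p^\star$. Since the standing hypothesis is precisely $2C\beta^2\le 1$, I may take the endpoint value $p^\star=1$, which immediately produces the weakest spatial norm of $y$ and gives
\[
e^{C\Vert g^\prime(y)\Vert^2_{L^\infty(0,T;L^d(\Omega))}}\le 2C\max\big(1,e^{2C\alpha^2}\vert\Omega\vert\big)\Big(1+\frac{\Vert y\Vert_{L^\infty(0,T;L^1(\Omega))}}{\vert\Omega\vert}\Big)^{2C\beta^2},
\]
which is precisely $d(y)$. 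It is instructive to observe that the choice $p^\star=1$ is exactly what turns the H\"older step in the proof of Lemma \ref{estimate_Cobs} into Jensen's inequality: as $2C\beta^2\le 1$, the map $t\mapsto t^{2C\beta^2}$ is concave on $[0,\infty)$, so that $\frac{1}{\vert\Omega\vert}\int_\Omega(1+\vert y\vert)^{2C\beta^2}\le\big(1+\Vert y\Vert_{L^1(\Omega)}/\vert\Omega\vert\big)^{2C\beta^2}$ against the probability measure $dx/\vert\Omega\vert$ on $\Omega$. This is the source of the lemma's name and explains why the $L^1$-norm of $y$ already suffices.

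Substituting the last bound into the a priori estimate and relabelling the generic $\Omega,T$-dependent constants then yields the announced inequality with $d(y)=C_3(\alpha)\big(1+\Vert y\Vert_{L^\infty(0,T;L^1(\Omega))}/\vert\Omega\vert\big)^{2C\beta^2}$. I do not anticipate a genuine obstacle, the statement being little more than a reformulation of Lemma \ref{estimate_Cobs}; the two points deserving attention are (i) verifying that the endpoint exponent $p^\star=1$ is admissible, which relies essentially on $2C\beta^2\le 1$ both for the concavity/Jensen step and for the integrability, and (ii) tracking the multiplicative constants carefully so that the prefactor matches exactly $C_3(\alpha)$.
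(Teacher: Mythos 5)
Your proof is correct and is exactly the paper's argument: the paper's own proof reads, in its entirety, ``Lemma \ref{estimate_Cobs} with $p^\star=1$ and \eqref{estimateF1Y1} lead to the result,'' which is precisely your combination of the two estimates (with the generic constant absorbed into $C_3(\alpha)$, consistent with the paper's convention that $C$ varies from line to line). Your added remark interpreting the $p^\star=1$ H\"older step as Jensen's inequality for the concave map $t\mapsto t^{2C\beta^2}$ is a nice observation (and indeed explains the lemma's label) but is not a departure from the paper's route.
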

\begin{proof}
Lemma \ref{estimate_Cobs}  with $p^\star=1$ and \eqref{estimateF1Y1}  lead to the result.
\end{proof}

With these notations, the term $c(y)$ in \eqref{estimateEs} rewrites as 
\begin{equation}\label{defc(y)}
c(y)=\frac{C}{(1+s)\sqrt{2}} [g^{\prime}]_s \, d(y)^{1+s}, \quad \forall (y,f)\in \A, \,\forall s\in (0,1].
\end{equation}
\begin{proof}(of Theorem \ref{main_theorem})
If the initialization $(y_0,f_0)\in \A$ is such that $E(y_0,f_0)=0$, then the sequence $(y_k,f_k)_{k\in \mathbb{N}}$ is constant equal to $(y_0,f_0)$ and therefore converges. We assume in the sequel that $E(y_0,f_0)>0$.

We are going to prove that, for any $\beta<\beta^\star(s)$, there exists a constant $M>0$ such that the sequence $(y_k)_{k\in \mathbb{N}}$ defined by \eqref{algo_LS_Y} enjoys the uniform property

\begin{equation}\label{uniform_property}
\Vert y_k\Vert_{L^\infty(0,T;L^1(\Omega))}\leq M, \quad \forall k\in \mathbb{N}.
\end{equation}
The convergence of the sequence $(y_k,f_k)_{k\in \mathbb{N}}$ in $\mathcal{A}$ will then follow by proceeding as in Section \ref{gprimebound}.
Remark preliminary that the assumption $\beta<\beta^\star(s)$ implies $2C\beta^2< \frac{s}{2s+1}\leq 1$ since $s\in (0,1]$.

\vskip 0.25cm
\par\noindent
\textit{Proof of the uniform property \eqref{uniform_property} for some $M$ large enough}-
As for $n=0$, from any initialization $(y_0,f_0)$ chosen in $\A$, it suffices to take $M$ larger than $M_1:=\Vert y_0\Vert_{L^\infty(0,T;L^1(\Omega))}$. We then proceed by induction and assume that, for some $n\in\mathbb{N}$, $\Vert y_k\Vert_{L^\infty(0,T;L^1(\Omega))}\leq M$ for all $k\leq n$. 
This implies in particular that,
$$
d(y_k)\leq d_M(\beta):=C_3(\alpha)\biggl(1+\frac{M}{\vert \Omega\vert} \biggr)^{2 C\beta^2}, \quad \forall k\leq n
$$ 
and then
\begin{equation}\label{def_CMbeta}
c(y_k)\leq c_M(\beta):=\frac{C}{(1+s)\sqrt{2}}[g^\prime]_s\,  d_M^{1+s}(\beta), \quad \forall k\leq n.
\end{equation}
Then, we write that $\Vert y_{n+1}\Vert_{L^\infty(0,T;L^1(\Omega))}\leq \Vert y_0\Vert_{L^\infty(0,T;L^1(\Omega))}+\sum_{k=0}^n \lambda_k\Vert Y_k^1\Vert_{L^\infty(0,T;L^1(\Omega))}$.
But, Lemma \ref{lemma_jensen} implies that $
\Vert Y_k^1\Vert_{L^\infty(0,T;L^1(\Omega))}\leq d_M(\beta) \sqrt{E(y_k,f_k)}$ for all $k\leq n$ leading to 
\begin{equation}\label{estim_intermediaire}
\Vert y_{n+1}\Vert_{L^\infty(0,T;L^1(\Omega))}\leq \Vert y_0\Vert_{L^\infty(0,T;L^1(\Omega))}+m\, d_M(\beta) \sum_{k=0}^n \sqrt{E(y_k,f_k)}.
\end{equation}
Moreover, inequality \eqref{ineqEn} implies that $\sum_{k=0}^n \sqrt{E(y_k,f_k)}\leq \frac{1}{1-p_0(\widetilde{\lambda_0})}\sqrt{E(y_0,f_0)}$
where $p_0(\widetilde{\lambda_0})$ is given by \eqref{ptildek} with $c=c_M(\beta)$.

Now, we take  $M$ large enough so that $(1+s)^{1/s} c^{1/s}_M(\beta)\sqrt{E(y_0,f_0)}\geq 1$ 
%
i.e.
\begin{equation}\label{constraint1}
\biggl(\frac{C}{\sqrt{2}} \,[g^\prime]_s\biggr)^{1/s} C_3(\alpha)^{2/s}\biggl(1+\frac{M}{\vert \Omega\vert} \biggr)^{\frac{4 C\beta^2}{s}}\sqrt{E(y_0,f_0)}\geq 1.
\end{equation}

Such $M$ exists since $\sqrt{E(y_0,f_0)}>0$ is independent of $M$ and since the left hand side is of order 
$\mathcal{O}(M^{\frac{4C\beta^2}{s}})$ with $\frac{4C\beta^2}{s}>0$. We denote by $M_2$ the smallest value of $M$
such that \eqref{constraint1} hold true. 

\par\noindent
Then, from  \eqref{ptildek}, we get that $p_0(\widetilde{\lambda_0})=1- \frac{s}{(1+s)^{\frac1s+1}}\frac{1}{c^{1/s}_M(\beta)\sqrt{E(y_0,f_0)}}$ and therefore 
$$
\frac{1}{1-p_0(\widetilde{\lambda_0})}= \frac{(1+s)^{\frac1s+1}}{s}c^{1/s}_M(\beta)\sqrt{E(y_0,f_0)}
$$
so that $\sum_{k=0}^n \sqrt{E(y_k,f_k)}\leq \frac{(1+s)^{\frac1s+1}}{s}c^{1/s}_M(\beta) E(y_0,f_0)$. It follows from \eqref{estim_intermediaire} that  
$$
\Vert y_{n+1}\Vert_{L^\infty(0,T;L^1(\Omega))}\leq \Vert y_0\Vert_{L^\infty(0,T;L^1(\Omega))}+m\, d_M(\beta) \frac{(1+s)^{\frac1s+1}}{s}c_M^{1/s}(\beta) E(y_0,f_0).
$$
The definition of $c_M(\beta)$ (see \eqref{def_CMbeta}) then gives
$$
\begin{aligned}
\Vert y_{n+1}\Vert_{L^\infty(0,T;L^1(\Omega))}\leq &\Vert y_0\Vert_{L^\infty(0,T;L^1(\Omega))}\\
&+\frac{m(1+s)}{s}  \biggl(\frac{C[g^\prime]_s}{\sqrt{2}}\biggr)^{1/s} \biggl(C_3(\alpha)\biggr)^{1+\frac{2}{s}}\, E(y_0,f_0) \biggl(1+\frac{M}{\vert \Omega\vert} \biggr)^{\frac{(2 C\beta^2)(2s+1)}{s}}.
\end{aligned}
$$
Now, we take $M>0$ large enough so that the right hand side is bounded by $M$, i.e.
\begin{equation}
\label{constraint2}
\begin{aligned}
\Vert y_0\Vert_{L^\infty(0,T;L^1(\Omega))}+\frac{m(1+s)}{s}  \biggl(\frac{C [g^\prime]_s}{\sqrt{2}} \biggr)^{1/s} \biggl(C_3(\alpha)\biggr)^{1+\frac{2}{s}}\, E(y_0,f_0) \biggl(1+\frac{M}{\vert \Omega\vert} \biggr)^{\frac{(2 C\beta^2)(2s+1)}{s}}\leq M.
\end{aligned}
\end{equation}
Such $M$ exists under the assumption $\beta<\beta^\star(s)$ i.e. $\frac{(2 C\beta^2)(2s+1)}{s}<1$.
We denote by $M_3$ the smallest value of $M$ such that \eqref{constraint2} holds true. Eventually, taking $M:=\max(M_1,M_2,M_3)$, we get 
that $\Vert y_{n+1}\Vert_{L^\infty(0,T;L^1(\Omega))}\leq M$ as well. We have then proved by induction the uniform property
\eqref{uniform_property} for some $M$ large enough. 

\vskip 0.25cm
\par\noindent
\textit{Proof of the convergence of the sequence $(y_k,f_k)_{k\in \mathbb{N}}$}- In view of Lemma \ref{estimate_Cobs} with $p^\star=1$, the uniform property \eqref{uniform_property} implies that the observability constant $C e^{C\Vert g^{\prime}(y_k)\Vert^2_{L^\infty(0,T;L^d(\Omega))}}$ appearing in the estimates for $(Y_k^1,F_k^1)$ (see Proposition \ref{estimateC1C2})
is uniformly bounded with respect to the parameter $k$. As a consequence, the constant $c(y_k)$ appearing in the instrumental estimate \eqref{estimateEs} is bounded by $c_M(\beta)$ given by \eqref{def_CMbeta}. Consequently, the developments of Section \ref{gprimebound} apply with $c=c_M(\beta)$. Theorem \ref{main_theorem} then follows from the proof of Proposition \ref{prop_gprime_bounded}. 
\end{proof}

\begin{remark}
Remark that  $M:=\max(M_2,M_3)$ since $M_3\geq M_1$. The constant $M_2$ can be made explicit since the constraint (\ref{constraint1}) implies that 
$$
\biggl(\frac{C[g^\prime]_s}{\sqrt{2}} \biggr)^{1/s} C_3(\alpha)^{2/s}\biggl(1+\frac{M}{\vert \Omega\vert} \biggr)^{\frac{4 C\beta^2}{s}}\sqrt{E(y_0,f_0)}\geq 1.
$$
i.e.
$$
 \biggl(1+\frac{M}{\vert \Omega\vert} \biggr)^{2 C\beta^2}\geq C_3(\alpha)^{-1}\sqrt{E(y_0,f_0)}^{-s/2}\biggl(\frac{C}{\sqrt{2}} [g^\prime]_s\biggr)^{-1/2}.
$$
In particular, $M_2$ is large for small values of $\sqrt{E(y_0,f_0)}$, for any $s>0$. On the other hand, the constant $M_3$ is implicit, hence whether $M_2>M_3$ or $M_3>M_2$ depend on the values of $\sqrt{E(y_0,f_0)}$ and 
$\Vert y_0\Vert_{L^\infty(0,T;L^1(\Omega))}$. Remark that $\sqrt{E(y_0,f_0)}$ can be large and $\Vert y_0\Vert_{L^\infty(0,T;L^1(\Omega))}$ small, and vice versa. 
\end{remark}

\section{Conclusion and further comments} \label{sec:remarks}
Exact controllability of \eqref{eq:wave-NL} has been established in \cite{Zhang2007}, under a growth condition on $g$, by means of a Leray-Schauder fixed point argument that is not constructive. In this paper, under a slightly stronger growth condition and under the additional assumption that $g^\prime$ is uniformly H\"older continuous with exponent $s\in[0,1]$, we have designed an explicit algorithm and proved its convergence of a controlled solution of \eqref{eq:wave-NL}. Moreover, the convergence is super-linear of order greater than or equal to $1+s$ after a finite number of iterations. 

In turn, our approach gives a new and constructive proof of the exact controllability of \eqref{eq:wave-NL}. Moreover, we emphasize that the method is general and may be applied to any other equations or systems - not necessarily of hyperbolic nature - for which a precise observability estimate for the linearized problem is available:  we refer to \cite{lemoine_gayte_munch} addressing the case of the heat equation. Among the open issues, we mention the extension of this constructive approach to the case of the boundary controllability (see for instance \cite{zuazua91}).
\medskip

Several comments are in order.

\paragraph{Asymptotic condition.}
The asymptotic condition \ref{growth_condition} on $g^\prime$ is slightly stronger than the asymptotic condition \ref{asymptotic_behavior} made in \cite{Zhang2007}: this is due to our linearization of \eqref{eq:wave-NL} which involves $r\to g^{\prime}(r)$ while the linearization \eqref{NL_z} in \cite{Zhang2007} involves $r\to (g(r)-g(0))/r$. There exist cases covered by Theorem \ref{ZhangTH} in which exact controllability for \eqref{eq:wave-NL} is true but that are not covered by Theorem \ref{main_theorem}. Note however that the example $g(r)=a+b r+ c r \ln^{1/2}(1+\vert r\vert)$, for any $a,b\in \mathbb{R}$ and for any $c>0$  small enough (which is somehow the limit case in Theorem \ref{ZhangTH}) satisfies \ref{growth_condition} as well as 
\ref{constraint_g_holder} for any $s\in [0,1]$. 

While Theorem \ref{ZhangTH} was established in \cite{Zhang2007} by a nonconstructive Leray-Schauder fixed point argument, we obtain here, in turn, a new proof of the exact controllability of semilinear multi-dimensional wave equations, which is moreover constructive, with an algorithm that converges unconditionally, at least with order $1+s$.

\paragraph{Minimization functional.}
Among all possible admissible controlled pair $(y,v)\in \A_0$, we have selected the solution $(Y_1,F_1)$ of \eqref{wave-Y1} that minimizes the functional $J(v)=\Vert v\Vert^2_{2,q_T}$. This choice has led to the estimate \eqref{estimateF1Y1} which is one of the key points of the convergence analysis. The analysis remains true when one considers the quadratic functional $J(y,v)=\Vert w_1  v\Vert^2_{2,q_T} + \Vert w_2  y\Vert^2_2$ for some positive weight functions $w_1$ and $w_2$ (see for instance \cite{cindea_efc_munch_2013}). 

\paragraph{Link with Newton method.}
Defining $F:\mathcal{A}\to L^2(Q_T)$ by $F(y,f):=(\partial_{tt} y-\Delta y + g(y)-f\,1_\omega)$, we have $E(y,f)=\frac{1}{2}\Vert F(y,f)\Vert_2^2$ and we observe that, for $\lambda_k=1$, the algorithm \eqref{algo_LS_Y} coincides with the Newton algorithm associated to the mapping $F$ (see \eqref{Newton-nn}). This explains the super-linear convergence property in Theorem \ref{main_theorem}, in particular the quadratic convergence when $s=1$. The optimization of the parameter $\lambda_k$ gives to a global convergence property of the algorithm and leads to the so-called damped Newton method applied to $F$. For this method, global convergence is usually achieved with linear order under general assumptions (see for instance \cite[Theorem 8.7]{deuflhard}). As far as we know, the analysis of damped type Newton methods  for partial differential equations has deserved very few attention in the literature. We mention \cite{lemoinemunch_time, saramito} in the context of fluids mechanics.   

\paragraph{A variant.}
To simplify, let us take $\lambda_k=1$, as in the standard Newton method. Then, for each $k\in\N$, the optimal pair $(Y_k^1,F_k^1)\in \mathcal{A}_0$ is such that the element $(y_{k+1},f_{k+1})$ minimizes over $\A$ the functional $(z,v)\to J(z-y_k,v-f_k)$ with $J(z,v):=\Vert v\Vert_{2,q_T}$ (control of minimal $L^2(q_T)$ norm). 
Alternatively, we may select the pair $(Y_k^1,F_k^1)$ so that the element $(y_{k+1},f_{k+1})$ minimizes the functional $(z,v)\to J(z,v)$. This leads to the sequence $(y_k,f_k)_{k\in\N}$ defined by 
 \begin{equation}
\label{eq:wave_lambdakequal1}
\left\{
\begin{aligned}
& \partial_{tt}y_{k+1} - \Delta y_{k+1} +  g^{\prime}(y_k) y_{k+1} = f_{k+1} 1_{\omega}+g^\prime(y_k)y_k-g(y_k) & \textrm{in}\  Q_T,\\
& y_k=0,  & \textrm{on}\  \Sigma_T, \\
& (y_{k+1}(\cdot,0), \partial_t y_{k+1}(\cdot,0))=(u_0,u_1) & \textrm{in}\  \Omega.
\end{aligned}
\right.
\end{equation}
In this case, for every $k\in\N$, $(y_k,f_k)$ is a controlled pair for a linearized wave equation, while, in the case of the algorithm \eqref{algo_LS_Y}, $(y_k,f_k)$ is a sum of controlled pairs $(Y^1_j,F^1_j)$ for $0\leq j\leq k$.
This formulation used in \cite{EFC-AM-2012} is different and the convergence analysis (at least in the least-squares setting) does not seem to be straightforward because the term $g^\prime(y_k)y_k-g(y_k)$ is not easily bounded in terms of $\sqrt{E(y_k,f_k)}$.

\paragraph{Initialization with the controlled pair of the linear equation.} The number  of iterates to achieve convergence (notably to enter in a super-linear regime) depends on the size of the value $E(y_0,f_0)$. A natural example of an initialization $(y_0,f_0)\in\A$ is to take $(y_0,f_0)=(y^\star,f^\star)$, the unique solution of minimal control norm of \eqref{eq:wave-NL} with $g=0$ (i.e., in the linear case). Under the assumption \ref{growth_condition}, this leads to the estimate 
$$
E(y_0,f_0)=\frac{1}{2}\Vert g(y_0)\Vert_2^2\leq \vert g(0)\vert^2 \vert Q_T\vert + 2\int_{Q_T} \vert y_0\vert^2\big(\alpha^2+\beta^2  \ln(1+\vert y_0\vert)\big).
$$ 

\paragraph{Local controllability when removing the growth condition \ref{growth_condition}.}
If the real $E(y_0,f_0)$ is small enough, then we may remove the growth condition \ref{growth_condition} on $g^\prime$.
\begin{prop}\label{prop_smallness}
Assume $g^\prime$ satisfies \ref{constraint_g_holder} for some $s\in [0,1]$. Let $(y_k,f_k)_{k>0}$ be the sequence of $\mathcal{A}$ defined in \eqref{algo_LS_Y}. There exists a constant $C([g^\prime]_s)$ such that if $E(y_0,f_0)\leq C([g^\prime]_s)$, then $(y_k,f_k)_{k\in \mathbb{N}}\to (\overline{y},\overline{f})$ in $\mathcal{A}$ where $\overline{f}$ is a null control for $\overline{y}$ solution of  \eqref{eq:wave-NL}. Moreover, the convergence is at least linear and is at least of order $1+s$ after a finite number of iterations.
\end{prop}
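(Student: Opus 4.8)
The plan is to reproduce the architecture of the proof of Theorem~\ref{main_theorem}, replacing the role played there by the growth condition \ref{growth_condition} with the smallness of $E(y_0,f_0)$. Concretely, I would show that the iterates $(y_k,f_k)$ remain in a ball of $\mathcal{A}$ on which the observability constant $d(y_k)=Ce^{C\Vert g^\prime(y_k)\Vert^2_{L^\infty(0,T;L^d(\Omega))}}$ is uniformly bounded; once this is granted, the constant $c(y_k)$ appearing in \eqref{estimateEs} is uniformly bounded by some $c_M$, and the strong convergence of $(y_k,f_k)$ together with the rate $1+s$ after finitely many iterations follow verbatim from Lemma~\ref{geomquadratic} and the summability argument of Proposition~\ref{prop_gprime_bounded}.

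The substitute for Lemma~\ref{estimate_Cobs} is the elementary consequence of \ref{constraint_g_holder}, namely $\vert g^\prime(r)\vert\leq \vert g^\prime(0)\vert+[g^\prime]_s\vert r\vert^s$, which after integration and the Sobolev embedding $H_0^1(\Omega)\hookrightarrow L^{ds}(\Omega)$ (licit for $s\in(0,1]$, $d\in\{2,3\}$ since $ds\leq 6$) yields $\Vert g^\prime(y)\Vert_{L^\infty(0,T;L^d(\Omega))}\leq \vert g^\prime(0)\vert\vert\Omega\vert^{1/d}+C[g^\prime]_s\Vert y\Vert^s_{L^\infty(0,T;H_0^1(\Omega))}$. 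Hence $d(y)$ is controlled by $\Vert y\Vert_{L^\infty(0,T;H_0^1(\Omega))}$ alone, and the whole problem reduces to producing a uniform bound on this single norm along the iteration.

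This bound I would obtain by induction, exactly along the lines of the proof of \eqref{uniform_property}. Fixing $M:=\Vert y_0\Vert_{L^\infty(0,T;H_0^1(\Omega))}+1$ freezes a bound $d_M$ for $d(y_k)$ and $c_M:=\frac{C}{(1+s)\sqrt{2}}[g^\prime]_s\,d_M^{1+s}$ for $c(y_k)$ whenever $\Vert y_k\Vert_{L^\infty(0,T;H_0^1(\Omega))}\leq M$. Assuming this for $k\leq n$, I would first require $E(y_0,f_0)$ small enough that $(1+s)^{1/s}c_M^{1/s}\sqrt{E(y_0,f_0)}<1$; this places us in the first (favourable) case of Lemma~\ref{geomquadratic}, so that $\lambda_k=1$, the sequence $(E(y_k,f_k))_k$ decays with rate $1+s$, and $\sum_{k=0}^n\sqrt{E(y_k,f_k)}\leq \sqrt{E(y_0,f_0)}/(1-a_0^s)$ with $a_0:=c_M^{1/s}\sqrt{E(y_0,f_0)}$. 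Bounding the displacement through \eqref{estimateF1Y1}, namely $\Vert y_{n+1}-y_0\Vert_{L^\infty(0,T;H_0^1(\Omega))}\leq m\,d_M\sum_{k=0}^n\sqrt{E(y_k,f_k)}$, a second smallness requirement on $E(y_0,f_0)$ forces the right-hand side below $1$ and closes the induction $\Vert y_{n+1}\Vert_{L^\infty(0,T;H_0^1(\Omega))}\leq M$.

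The main obstacle is the self-referential coupling between the a priori radius $M$ and the constant $d_M$ that governs how far the iterates travel: one cannot simply enlarge $M$ to absorb the motion, since $d_M$ grows (exponentially) with $M$. The resolution lies in the order of the quantifiers above: $M$ is fixed first, in terms of $\Vert y_0\Vert_{L^\infty(0,T;H_0^1(\Omega))}$ only, which freezes $d_M$ and $c_M$; only afterwards is the threshold on $E(y_0,f_0)$ selected, small enough to meet simultaneously the two requirements (the favourable case of Lemma~\ref{geomquadratic} and the displacement bound $\leq 1$). Both are compatible because their right-hand sides tend to $0$ as $E(y_0,f_0)\to 0$ while $d_M,c_M$ stay fixed; the resulting threshold $C([g^\prime]_s)$ depends on $[g^\prime]_s$ and, through $d(y_0)$, on $\Vert y_0\Vert_{L^\infty(0,T;H_0^1(\Omega))}$. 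With the uniform bound \eqref{uniform_property} secured for this norm, $d(y_k)\leq d_M$ uniformly in $k$, and the convergence, the identification of the limit as a controlled solution of \eqref{eq:wave-NL} with $\overline f$ a null control, and the order $1+s$ after finitely many iterations are concluded exactly as in Proposition~\ref{prop_gprime_bounded}.
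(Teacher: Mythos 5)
Your proof is correct, but it follows a genuinely different route from the paper's. The paper never proves a uniform bound on $\Vert y_k\Vert$ in any fixed norm by induction; instead it works directly with the scalar quantity $e_k:=c(y_k)E(y_k,f_k)^{s/2}$ and controls the \emph{drift} of the observability constant from one iterate to the next, writing $\vert g^\prime(y_{k+1})-g^\prime(y_k)\vert\leq [g^\prime]_s\vert\lambda_k Y_k^1\vert^s$ and combining with \eqref{estimateF1Y1} to get a multiplicative bound $c(y_{k+1})/c(y_k)\leq \big(e^{C_1e_k^2+C_2e_k}\big)^{1+s}$; this yields the closed scalar recursion $e_{k+1}\leq e_k^2\big(e^{C_1e_k^2+C_2e_k}\big)^{1+s}$ (taking $\lambda=1$ when $2e_k<1$), and smallness of $e_0$ alone then forces $e_k\to 0$, hence $E(y_k,f_k)\to 0$ since $c(y_k)\geq 1$. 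Your argument instead transplants the ball-induction structure of the proof of Theorem \ref{main_theorem}: you fix the radius $M=\Vert y_0\Vert_{L^\infty(0,T;H_0^1(\Omega))}+1$ \emph{first} (which is the right resolution of the circularity between $M$ and $d_M$), control $d(y)$ by $\Vert y\Vert_{L^\infty(0,T;H_0^1(\Omega))}$ via $\vert g^\prime(r)\vert\leq\vert g^\prime(0)\vert+[g^\prime]_s\vert r\vert^s$ and Sobolev embedding, and close the induction because the total displacement is $O(\sqrt{E(y_0,f_0)})$ with constants frozen. Both arguments are sound and both share the same two caveats as the paper's: the threshold genuinely depends on $y_0$ (through $d_M$ in your case, through $c(y_0)$ in the paper's $e_0$), not on $[g^\prime]_s$ alone as the statement suggests; and both really concern $s>0$ (for $s=0$ smallness of $E(y_0,f_0)$ cannot substitute for \ref{bornegprime}). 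What the paper's route buys is an explicit admissible set expressed purely in terms of $e_0$, with no Sobolev detour and no crude a priori radius, and a cleaner link to the damped-Newton picture; what yours buys is a more elementary and structurally unified argument, making transparent that smallness of $E(y_0,f_0)$ plays exactly the role that the growth condition \ref{growth_condition} plays in Theorem \ref{main_theorem}. One harmless slip: you assert $\lambda_k=1$ in the favourable case, but $\lambda_k$ minimizes $E$ itself, not the upper bound $p_k$; this costs nothing since your decay estimate only evaluates the bound of Lemma \ref{lemma_estimateEs} at $\lambda=1$ and your displacement estimate only uses $\lambda_k\leq m$.
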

\begin{proof} In this proof, the notation $\Vert \cdot \Vert_{\infty,d}$ stands for $\Vert\cdot\Vert_{L^\infty(0,T;L^d(\Omega))}$. We note $D:=\frac{C}{(1+s)\sqrt{2}} [g^\prime]_s$ and $e_k=c(y_k)E(y_k,f_k)^{s/2}$ with  $c(y):= D  d(y)^{1+s}$ and $d(y):=C  e^{C\Vert g^{\prime}(y) \Vert^2_{\infty,d}}$. \eqref{estek} then reads 
\begin{equation}
\label{ekgk}
\sqrt{E(y_{k+1},f_{k+1})}  \leq   \min_{\lambda\in [0,m]}\big(\vert 1-\lambda\vert + \lambda^{1+s} e_k\big)\sqrt{E(y_k,f_k)}.
\end{equation}
We write $\vert g^\prime(y_k)-g^\prime(y_k-\lambda_k Y_k^1) \vert \leq [g^\prime]_s \vert \lambda_k Y_k^1  \vert^s$ so that 
 $$
 \Vert g^{\prime}(y_{k+1})\Vert^2_{L^\infty(0,T;L^d(\Omega))} \leq \Vert g^{\prime}(y_k)\Vert^2_{\infty,d} + \big([g^\prime]_s \lambda_k^s\Vert (Y_k^1)^s\Vert_{\infty,d}\big)^2+2\Vert g^{\prime}(y_k)\Vert_{\infty,d}[g^\prime]_s \lambda_k^{s}\Vert (Y_k^1)^s\Vert_{\infty,d}
 $$
 and 
 $$
 e^{C \Vert g^{\prime}(y_{k+1})\Vert^2_{\infty,d}}\leq e^{C \Vert g^{\prime}(y_k)\Vert^2_{\infty,d}}e^{C \big([g^\prime]_s \lambda_k^{s}\Vert (Y_k^1)^s\Vert_{\infty,d}\big)^2}e^{2C \Vert g^{\prime}(y_k)\Vert_{\infty,d}\big([g^\prime]_s \lambda_k^{s}\Vert (Y_k^1)^s\Vert_{\infty,d}\big)}
 $$
 leading to 
 $$
 \frac{c(y_{k+1})}{c(y_k)}\leq \biggl(e^{C \big([g^\prime]_s \lambda_k^{s}\Vert (Y_k^1)^s\Vert_{\infty,d}\big)^2}e^{2C \Vert g^{\prime}(y_k)\Vert_{\infty,d}\big([g^\prime]_s \lambda_k^{s}\Vert (Y_k^1)^s\Vert_{\infty,d}\big)}\biggr)^{1+s}.
 $$
 We infer that $\Vert  (Y_k^1)^s\Vert_{\infty,d} =\Vert Y_k^1\Vert^s_{\infty,sd}$. Moreover, \eqref{estimateF1Y1} leads to 
$$
\begin{aligned}
\Vert Y_k^1\Vert^s_{\infty,sd}& \leq d^s(y_k)E(y_k,f_k)^{s/2}=\frac{c(y_k)^{\frac{s}{1+s}}}{D^{\frac{s}{1+s}}}E(y_k,f_k)^{s/2}\leq D^{-\frac{s}{1+s}} c(y_k)E(y_k,f_k)^{s/2}\end{aligned}
$$
using that $c(y_k)\geq1$ (by increasing the constant $C$ is necessary). Consequently, 
$$
e^{C \big([g^\prime]_s \lambda ^{s}\Vert (Y_k^1)^s\Vert_{\infty,d}\big)^2}\leq e^{C \big([g^\prime]_s \lambda ^{s}D^{-\frac{s}{1+s}}e_k\big)^2}:=e^{C_1 e_k^2}.
$$
Similarly, 
$$
\begin{aligned}
\Vert g^{\prime}(y_k)\Vert_{\infty,d}\Vert (Y_k^1)^s\Vert_{\infty,d}&\leq \Vert g^{\prime}(y_k)\Vert_{\infty,d} d^s(y_k)E(y_k,f_k)^{s/2}\\
&\leq \Vert g^{\prime}(y_k)\Vert_{\infty,d}\biggl( C  e^{C\Vert g^{\prime}(y) \Vert^2_{L^\infty(0,T;L^d(\Omega))}}\biggr)^sE(y_k,f_k)^{s/2}\\
&\leq \biggl( C  e^{C\Vert g^{\prime}(y) \Vert^2_{L^\infty(0,T;L^d(\Omega))}}\biggr)^{s+1}E(y_k,f_k)^{s/2}\leq \frac{c(y_k)}{D}E(y_k,f_k)^{s/2}=\frac{e_k}{D}\\
\end{aligned}
$$
using that $a\leq C e^{Ca^2}$ for all $a\geq 0$ and $C>0$ large enough. It follows that  
$$
e^{2C \Vert g^{\prime}(y_k)\Vert_{\infty,d}\big([g^\prime]_s \lambda_k^{s}\Vert (Y_k^1)^s\Vert_{\infty,d}\big)}\leq e^{2C [g^\prime]_s \lambda_k^s\frac{e_k}{D}}:=e^{C_2 e_k}
$$
and then $\frac{c(y_{k+1})}{c(y_k)}\leq (e^{C_1e_k^2+C_2 e_k})^{1+s}$. By multiplying \eqref{ekgk} by $c(y_{k+1})$, we obtain the inequality 
\begin{equation}
 \nonumber
e_{k+1}  \leq   \min_{\lambda\in [0,m]} \big(\vert 1-\lambda\vert +e_k \lambda^{1+s}\big) \quad (e^{C_1e_k^2+C_2 e_k})^{1+s}\, e_k.
\end{equation}
If $2e_k<1$, the minimum is reached for $\lambda=1$ leading $\frac{e_{k+1}}{e_k}\leq  e_k (e^{C_1e_k^2+C_2 e_k})^{1+s}$. Consequently, if the initial guess $(y_0,f_0)$ belongs to the set $\{(y_0,f_0)\in \mathcal{A}, e_0< 1/2, e_0 (e^{C_1e_0^2+C_2 e_0})^{1+s}<1\}$,
the sequence $(e_k)_{k>0}$ goes to zero as $k\to \infty$. Since $c(y_k)\geq 1$ for all $k\in \mathbb{N}$, this implies that the sequence $(E(y_k,f_k))_{k>0}$ goes to zero as well. Moreover, from \eqref{estimateF1Y1}, we get $D \Vert (Y_k^1,F_k^1)\Vert_{\mathcal{H}}\leq e_k \sqrt{E(y_k,f_k)}$ and repeating the arguments of the proof of Proposition \ref{prop_gprime_bounded}, we conclude that the sequence $(y_k,f_k)_{k>0}$ converges to a controlled pair for \eqref{eq:wave-NL}.
\end{proof}

These computations does not use the assumption \ref{growth_condition} on the nonlinearity $g$. 
However, the smallness assumption on $e_0$ requires a smallness assumption on $E(y_0,f_0)$ (since $c(y_0)>1$). This is equivalent to assume the controllability of \eqref{eq:wave-NL}. Alternatively, in the case $g(0)=0$, the smallness assumption on $E(y_0,f_0)$ is achieved as soon as $\Vert (u_0,u_1)\Vert_{\boldsymbol{V}}$ is small enough. Therefore, the convergence result stated in Proposition \ref{prop_smallness} is equivalent to the local controllability property for \eqref{eq:wave-NL}. Proposition \ref{prop_smallness} can actually be seen as a consequence of the usual convergence of the Newton method: when $E(y_0,f_0)$ is small enough, i.e., when the initialization is close enough to the solution, then $\lambda_k=1$ for every $k\in\N$ and we recover the standard Newton method.

\paragraph{Weakening of the condition \ref{constraint_g_holder}.}

Given any $s\in [0,1]$, we introduce for any $g\in C^1(\R)$ the following hypothesis : 
\begin{enumerate}[label=$\bf (\overline{H}^{\prime}_s)$,leftmargin=1.5cm]
\item\label{constraint_g_holderbis}\ There exist $\overline{\alpha},\overline{\beta},\gamma\in \R^+$ such that 
$\vert g^\prime(a)-g^\prime(b)\vert \leq \vert a-b\vert^s \big(\overline{\alpha}+\overline{\beta}(\vert a \vert^\gamma +\vert b\vert^\gamma)\big), \quad \forall a,b\in \R$
\end{enumerate}
which coincides with \ref{constraint_g_holder} if $\gamma=0$ for $\overline{\alpha}+\overline{\beta}=[g^\prime]_s$. If $\gamma\in (0,1)$ is small enough and related to the constant $\beta$ appearing in the growth condition \ref{growth_condition}, Theorem \ref{main_theorem} still holds if \ref{constraint_g_holder} is replaced by the weaker hypothesis \ref{constraint_g_holderbis}. Precisely, if $g$ satisfies \ref{growth_condition} and \ref{constraint_g_holderbis} for some $s\in (0,1]$, then the sequence $(y_k,f_k)_{k\in \mathbb{N}}$ defined by \eqref{algo_LS_Y} fulfills the estimate 
$$
E(y_{k+1},f_{k+1})\leq   E(y_k,f_k)\min_{\lambda\in [0,m]}\biggl(\vert 1-\lambda\vert +\lambda^{1+s}\,c(y_k) E(y_k,f_k)^{s/2}\biggr)^2 \nonumber
$$
with
$c(y):= \frac{1}{(1+s)\sqrt{2}}\biggl( \big(\overline{\alpha}+2\overline{\beta}\Vert y_k\Vert^\gamma_{\infty,6\gamma})+\overline{\beta}m^\gamma d(y)^{\gamma}E(y_0,f_0)^\frac{\gamma}{2}\biggr)d(y)^{1+s}$ and  $d(y):=C  e^{C\Vert g^{\prime}(y) \Vert^2_{L^\infty(0,T;L^d(\Omega))}}$. Using Lemma \ref{estimate_Cobs}
with $p^\star=6\gamma\leq  6$ and proceeding as in the proof of Theorem \ref{main_theorem}, one may prove by induction that the sequence $(\Vert y_k\Vert_{L^\infty(0,T; L^6(\Omega))})_{k\in \mathbb{N}}$ is uniformly bounded under the condition $\frac{\gamma+(2 C\beta^2)(1+2s)}{s}<1$
and then deduce the convergence of the sequence $(y_k,f_k)_{k\in \mathbb{N}}$.

\appendix

\section{Appendix: controllability results for the linearized wave equation}\label{sec:linearizedwave}

We recall in this section some \textit{a priori} estimates for the linearized wave equation with potential in $L^\infty(0,T;L^d(\Omega))$ and right hand side in $L^2(Q_T)$. We first recall the crucial observability type estimate proved in \cite[Theorem 2.2]{Zhang2007} (see also \cite[Theorem 2.1]{Li_Zhang_2000}).

\begin{prop}\label{iobs_zhang_2007}\cite{Zhang2007}
Assume that $\omega$ and $T$ satisfy the assumptions of Theorem \ref{ZhangTH}. For any $A\in L^\infty(0,T;L^d(\Omega))$, and $(\phi_0,\phi_1)\in \boldsymbol{H}:=L^2(\Omega)\times H^{-1}(\Omega)$, the weak solution $\phi$ of 
\begin{equation}
\label{eq:wave-phi}
\left\{
\begin{aligned}
& \partial_{tt}\phi - \Delta \phi + A\phi = 0, & \textrm{in}\,\, Q_T,\\
& \phi=0, & \textrm{on}\,\, \Sigma_T, \\
& (\phi(\cdot,0),\phi_t(\cdot,0))=(\phi_0,\phi_1), & \textrm{in}\,\, \Omega,
\end{aligned}
\right.
\end{equation}
satisfies the observability inequality $\Vert \phi_0,\phi_1\Vert_{\boldsymbol{H}} \leq C e^{C \Vert A\Vert^2_{L^\infty(0,T;L^d(\Omega))}} \Vert \phi\Vert_{2,q_T}$
for some $C>0$ only depending on $\Omega$ and $T$.
\end{prop}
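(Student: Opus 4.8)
The plan is to derive this recalled result of \cite{Zhang2007} from a global Carleman estimate for the wave operator, tracking precisely how the Carleman parameter must depend on the potential $A$. First I would fix $x_0\in\mathbb{R}^d\setminus\overline{\Omega}$ and introduce the weight $\varphi(x,t)=|x-x_0|^2-\beta(t-T/2)^2$, where $\beta$ is chosen in the interval $\big((2\max_{\overline{\Omega}}|x-x_0|/T)^2,\,1\big)$, which is nonempty precisely because of the time condition $T>2\max_{\overline{\Omega}}|x-x_0|$ in \ref{condgeom}. This guarantees $\varphi(\cdot,0)=\varphi(\cdot,T)<0$ on $\overline{\Omega}$ while $\varphi(\cdot,T/2)\ge\mathrm{dist}(x_0,\Omega)^2>0$. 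Setting $\theta=e^{s\varphi}$, the classical pseudoconvexity computation (the multiplier/Carleman method of \cite{JLL88,Li_Zhang_2000}) gives, for all $s\ge s_0(\Omega,T)$,
\[
s\int_{Q_T}\theta^2\big(|\phi_t|^2+|\nabla\phi|^2\big)+s^3\int_{Q_T}\theta^2|\phi|^2\le C\Big(\int_{Q_T}\theta^2|\partial_{tt}\phi-\Delta\phi|^2+\text{(local term on $q_T$)}\Big),
\]
the reduction of the local term to the single region $q_T=\omega\times(0,T)$ being permitted by a cutoff localizing the bad set near $\Gamma_0=\{(x-x_0)\cdot\nu>0\}$, which is exactly what $\omega$ covers under \ref{condgeom}.

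The crux is to absorb the zeroth-order contribution of the potential. Since $\partial_{tt}\phi-\Delta\phi=-A\phi$, I would bound, for a.e.\ $t$, by Hölder and the Sobolev embedding $H^1(\Omega)\hookrightarrow L^{2d/(d-2)}(\Omega)$ (and the corresponding subcritical embedding when $d=2$),
\[
\int_{\Omega}\theta^2|A\phi|^2\le\|A(t)\|_{L^d(\Omega)}^2\,\|\theta\phi\|_{L^{2d/(d-2)}(\Omega)}^2\le C\|A(t)\|_{L^d(\Omega)}^2\int_{\Omega}\theta^2\big(|\nabla\phi|^2+s^2|\phi|^2\big),
\]
the $s^2|\phi|^2$ term arising from $\nabla\theta=s\theta\nabla\varphi$. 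Integrating in $t$ bounds $\int_{Q_T}\theta^2|\partial_{tt}\phi-\Delta\phi|^2$ by $C\|A\|_{L^\infty(0,T;L^d(\Omega))}^2\int_{Q_T}\theta^2(|\nabla\phi|^2+s^2|\phi|^2)$, so the choice $s=\max\big(s_0,\,2C\|A\|_{L^\infty(0,T;L^d(\Omega))}^2\big)$ lets this quantity be absorbed into the left-hand side.

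With $s$ frozen at that value I would remove the weights: on a slab $\{|t-T/2|\le\eta\}$ one has $\varphi\ge\delta>0$ so $\theta^2\ge e^{2s\delta}$, while $\theta^2\le e^{2s\max\varphi}$ on $q_T$; the ratio of these bounds, together with $s\sim\|A\|_{L^\infty(0,T;L^d(\Omega))}^2$, produces the factor $e^{C\|A\|_{L^\infty(0,T;L^d(\Omega))}^2}$. It remains to transfer the resulting interior control into the norm of the data. Since the observed quantity is $\phi$ itself in $L^2(q_T)$, the natural regularity level for the data is one derivative below the energy level, i.e.\ $\boldsymbol{H}=L^2(\Omega)\times H^{-1}(\Omega)$; the slab estimate is propagated to $t=0$ by the energy identity, which for $E_\phi(t)=\tfrac12\big(\|\phi_t\|_{L^2}^2+\|\nabla\phi\|_{L^2}^2\big)$ gives $|E_\phi'(t)|\le C\|A\|_{L^\infty(0,T;L^d(\Omega))}E_\phi(t)$ by the same Hölder--Sobolev estimate, whence a Grönwall factor $e^{CT\|A\|_{L^\infty(0,T;L^d(\Omega))}}$ that, being sub-quadratic in $\|A\|$, is absorbed into $Ce^{C\|A\|^2_{L^\infty(0,T;L^d(\Omega))}}$.

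The main obstacle is the absorption step: one must check that the potential term costs exactly the square $\|A\|_{L^\infty(0,T;L^d(\Omega))}^2$, so that the Carleman parameter is forced only up to $s\sim\|A\|^2$ and the observability constant is $e^{C\|A\|^2}$ and no worse. This hinges on the dimension-critical Sobolev embedding, which is delicate for $d=2$, where $L^d=L^2$ sits at the Hölder endpoint and one must exploit the slack in the subcritical embeddings $H^1\hookrightarrow L^q$, $q<\infty$, afforded by the $s^2$ weight. A secondary technical point is the regularity bookkeeping that matches the observation of $\phi$ (rather than $\nabla\phi$) in $L^2(q_T)$ to data in $L^2\times H^{-1}$; this is handled by applying the estimate at the correct regularity level, e.g.\ through a time-localized primitive of $\phi$.
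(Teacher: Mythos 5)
The paper itself gives no proof of Proposition \ref{iobs_zhang_2007}: it is recalled verbatim from \cite[Theorem 2.2]{Zhang2007} (see also \cite[Theorem 2.1]{Li_Zhang_2000}), and your sketch reconstructs precisely the route taken in those references — a global Carleman estimate with weight $e^{s\varphi}$, $\varphi(x,t)=\vert x-x_0\vert^2-\beta(t-T/2)^2$, $\beta$ admissible exactly because of \ref{condgeom}; absorption of the potential by H\"older plus Sobolev at the price of choosing $s\sim \Vert A\Vert^2_{L^\infty(0,T;L^d(\Omega))}$ (the Sobolev step loses one factor of $s$ against the left-hand side $s\int\theta^2\vert\nabla\phi\vert^2+s^3\int\theta^2\vert\phi\vert^2$, which is exactly why the exponent $2$ appears and no worse); unweighting to produce $e^{C\Vert A\Vert^2}$; Gr\"onwall propagation of the energy; and the shift down to the level $\boldsymbol{H}=L^2(\Omega)\times H^{-1}(\Omega)$ via a primitive of $\phi$ corrected by $\Delta^{-1}\phi_1$. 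For $d=3$ this is a faithful and correct outline of the cited proof, and you identify the key quantitative mechanism correctly.

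There is, however, one step that fails as written: your treatment of $d=2$. You propose to rescue the H\"older endpoint ($2d/(d-2)=\infty$) by ``the slack in the subcritical embeddings $H^1\hookrightarrow L^q$, $q<\infty$, afforded by the $s^2$ weight''. No power of $s$ can repair this endpoint: the bilinear bound that the absorption requires, namely $\int_\Omega A^2\phi^2\leq C\Vert A\Vert^2_{L^2(\Omega)}\big(\Vert\nabla\phi\Vert^2_{L^2(\Omega)}+s^2\Vert\phi\Vert^2_{L^2(\Omega)}\big)$, is false in dimension $2$ uniformly in $s$. Indeed, take $\phi_n$ the standard logarithmic peak with $\Vert\nabla\phi_n\Vert_{L^2}$ bounded, $\phi_n\sim\sqrt{\ln n}$ on $B_{1/n}$ and $\Vert\phi_n\Vert_{L^2}\to 0$, and $A_n=n\,1_{B_{1/n}}$, so that $\Vert A_n\Vert_{L^2}$ is bounded while $\int A_n^2\phi_n^2\sim\ln n\to\infty$; the right-hand side stays bounded for each fixed $s$. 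So for $d=2$ the H\"older--Sobolev absorption genuinely requires $A\in L^\infty(0,T;L^{2+\epsilon}(\Omega))$ for some $\epsilon>0$ (with a constant depending on $\epsilon$), not merely $A\in L^\infty(0,T;L^2(\Omega))$. This strengthening is harmless in the paper's application, where the potentials $g^\prime(y_k)$ enjoy better integrability, and it is consistent with how the paper itself sidesteps the same endpoint elsewhere (cf.\ the hypothesis $a\in L^\infty(0,T;L^{d+\epsilon}(\Omega))$ in Lemma \ref{gap}); but as a proof of the statement as literally formulated for $d=2$, your argument has a gap at exactly the point you flagged as ``delicate''.
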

\par\noindent
Classical arguments then lead to following controllability result.  
\begin{prop}\label{controllability_result}\cite{Zhang2007}
Let $A\in L^{\infty}(0,T;L^d(\Omega))$, $B\in L^2(Q_T)$ and $(z_0,z_1)\in \boldsymbol{V}$. Assume that $\omega$ and $T$ satisfy the assumptions of Theorem \ref{ZhangTH}. There exists a control function $u\in L^2(q_T)$ such that the solution of 
\begin{equation}
\label{wave_z}
\left\{
\begin{aligned}
& \partial_{tt} z- \Delta z +  A z  = u 1_{\omega} + B, & \textrm{in}\,\, Q_T,\\
& z=0, & \textrm{on}\,\, \Sigma_T, \\
& (z(\cdot,0),z_t(\cdot,0))=(z_0,z_1), &\textrm{in}\,\, \Omega,
\end{aligned}
\right.
\end{equation}
satisfies $(z(\cdot,T),z_t(\cdot,T))=(0,0)$ in $\Omega$.  Moreover, the unique pair $(u,z)$ of minimal control norm satisfies
\begin{equation}\label{estimate_u_z}
\Vert u\Vert_{2,q_T} + \Vert (z,\partial_t z)\Vert_{L^\infty(0,T;\boldsymbol{V})}\leq C\biggl(\Vert B\Vert_2+ \Vert z_0,z_1\Vert_{\boldsymbol{V}}\biggr) e^{C \Vert A\Vert^2_{L^\infty(0,T;L^d(\Omega))}} 
\end{equation}
for some constant $C>0$ only depending on $\Omega$ and $T$. 
\end{prop}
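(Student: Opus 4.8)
The plan is to deduce controllability from the observability inequality of Proposition~\ref{iobs_zhang_2007} by a standard duality (Hilbert Uniqueness Method) argument, so that the only quantitative input is the observability constant $Ce^{C\Vert A\Vert^2_{L^\infty(0,T;L^d(\Omega))}}$. Since the operator $\partial_{tt}-\Delta+A$ is formally self-adjoint, the relevant adjoint system is precisely the homogeneous problem \eqref{eq:wave-phi}. Multiplying \eqref{wave_z} by a solution $\phi$ of \eqref{eq:wave-phi} and integrating by parts over $Q_T$ (both $z$ and $\phi$ vanishing on $\Sigma_T$) yields, once one imposes $(z(\cdot,T),\partial_t z(\cdot,T))=(0,0)$, the transposition identity
$$
\int_{q_T} u\,\phi = \langle z_0,\partial_t\phi(\cdot,0)\rangle_{H^1_0,H^{-1}} - (z_1,\phi(\cdot,0))_{L^2} - \int_{Q_T} B\,\phi,
$$
valid for every data $(\phi_0,\phi_1)\in\boldsymbol{H}$. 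This recasts the null-controllability requirement as a single linear identity to be satisfied by $u$ against all adjoint states.

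First I would take as control $u=\widehat{\phi}|_{q_T}$, where $\widehat{\phi}$ solves \eqref{eq:wave-phi} with data $(\widehat{\phi}_0,\widehat{\phi}_1)$ minimizing over $\boldsymbol{H}$ the quadratic functional
$$
J(\phi_0,\phi_1):=\frac12\int_{q_T}\vert\phi\vert^2 - \langle z_0,\partial_t\phi(\cdot,0)\rangle_{H^1_0,H^{-1}} + (z_1,\phi(\cdot,0))_{L^2} + \int_{Q_T}B\,\phi.
$$
The functional $J$ is continuous and strictly convex, and its coercivity on $\boldsymbol{H}$ is exactly the content of Proposition~\ref{iobs_zhang_2007}: the leading term $\tfrac12\Vert\phi\Vert^2_{2,q_T}$ controls $\Vert(\phi_0,\phi_1)\Vert^2_{\boldsymbol H}$ up to the factor $(Ce^{C\Vert A\Vert^2})^2$, while the affine part is bounded by $(\Vert z_0,z_1\Vert_{\boldsymbol V}+\Vert B\Vert_2)\,\Vert(\phi_0,\phi_1)\Vert_{\boldsymbol H}$ through the energy estimate for \eqref{eq:wave-phi}. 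The direct method then furnishes a unique minimizer, whose Euler--Lagrange equation is precisely the transposition identity above; hence the associated $z$ is genuinely steered to rest, and the minimizer characterization yields the control of minimal $L^2(q_T)$ norm (any admissible control differs from $\widehat{\phi}|_{q_T}$ by a function orthogonal to all adjoint traces).

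The estimate \eqref{estimate_u_z} is obtained by testing the Euler--Lagrange equation against the minimizer itself, giving $\Vert\widehat{\phi}\Vert^2_{2,q_T}\le C(\Vert z_0,z_1\Vert_{\boldsymbol V}+\Vert B\Vert_2)\,\Vert(\widehat{\phi}_0,\widehat{\phi}_1)\Vert_{\boldsymbol H}$; inserting the observability bound $\Vert(\widehat{\phi}_0,\widehat{\phi}_1)\Vert_{\boldsymbol H}\le Ce^{C\Vert A\Vert^2}\Vert\widehat{\phi}\Vert_{2,q_T}$ and dividing by $\Vert\widehat{\phi}\Vert_{2,q_T}$ delivers the bound on $\Vert u\Vert_{2,q_T}$ with the claimed exponential factor. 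The estimate on $\Vert(z,\partial_t z)\Vert_{L^\infty(0,T;\boldsymbol V)}$ then follows by feeding this control bound into the energy estimate for \eqref{wave_z} with source $u1_\omega+B$, via a Gronwall argument and the embedding $H^1_0(\Omega)\hookrightarrow L^{2d/(d-2)}(\Omega)$ used to absorb the potential term $Az$.

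The main obstacle I anticipate is not the variational scheme but the low regularity at which the whole construction lives: the adjoint data lie in $\boldsymbol H=L^2(\Omega)\times H^{-1}(\Omega)$, so \eqref{eq:wave-phi} must be understood in the transposition sense, and one must check that $A\phi$ is meaningful at the $H^{-1}$ level (for $d\in\{2,3\}$, Hölder together with $H^1_0\hookrightarrow L^6$ gives $A\phi\in L^{6/5}\hookrightarrow H^{-1}$) and that the traces $\phi(\cdot,0)$, $\partial_t\phi(\cdot,0)$ and the interior observation $\phi|_{q_T}$ depend continuously on $(\phi_0,\phi_1)$, with constants that may grow with $\Vert A\Vert$. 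Tracking this $A$-dependence carefully is precisely what guarantees that a single exponential $e^{C\Vert A\Vert^2}$, and nothing worse, survives in the final estimate.
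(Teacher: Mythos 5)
Your proposal is correct and takes essentially the same route the paper intends: the paper gives no detailed proof of Proposition \ref{controllability_result}, stating only that "classical arguments" applied to the observability inequality of Proposition \ref{iobs_zhang_2007} yield the result, with a citation to \cite{Zhang2007}, and those classical arguments are precisely your HUM/duality construction. Your variational scheme --- minimizing $J$ over $\boldsymbol{H}$, with coercivity furnished by the observability constant $Ce^{C\Vert A\Vert^2_{L^\infty(0,T;L^d(\Omega))}}$, the transposition identity as Euler--Lagrange equation, and the estimate \eqref{estimate_u_z} obtained by testing that equation against the minimizer and then invoking the energy estimate for \eqref{wave_z} --- is exactly the standard proof being referenced.
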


Let $p^\star\in \mathbb{N}^\star$ such that $p^\star<\infty$ if $d=2$ and $p^\star<6$ if $d=3$. We next discuss some properties of the operator $K: L^\infty(0,T;L^{p^\star}(\Omega))\to L^\infty(0,T;L^{p^\star}(\Omega))$ defined by $K(\xi)=y_{\xi}$, a null controlled solution of the linear boundary value problem \eqref{NL_z} with the control $f_{\xi}$ of minimal $L^2(q_T)$ norm. Proposition \ref{controllability_result} with $B=-g(0)$ gives
\begin{equation}
\label{estimateK}
\Vert (y_{\xi},\partial_t y_{\xi})\Vert_{L^\infty(0,T;\boldsymbol{V})}\leq C\Big(\Vert u_0,u_1\Vert_{\boldsymbol{V}}+\Vert g(0)\Vert_2  \Big)e^{C\Vert \widehat{g}(\xi)\Vert^2_{L^\infty(0,T;L^d(\Omega))}}
\end{equation}
where the function $\hat{g}$ is defined in \eqref{NL_z}. We assume that $g\in C^1(\mathbb{R})$ satisfies the following asymptotic condition (slightly weaker than  \ref{asymptotic_behavior}): there exists a $\overline{\beta}$ small enough
such that $\limsup_{\vert r\vert \to \infty} \frac{\vert g(r)\vert }{\vert r\vert \ln^{1/2}\vert r\vert}\leq \overline{\beta}$, i.e. 

\begin{enumerate}[label=$\bf (H^\prime_1)$,leftmargin=1.5cm]
\item\label{growth_conditionp}\ There exist $\overline{\alpha}\geq 0$ and $\overline{\beta}\geq 0$ small enough such that $\vert g(r)\vert\leq \overline{\alpha} + \overline{\beta} (1+\vert r\vert))\ln^{1/2}(1+\vert r\vert )$ for every $r$ in $\R$. 
\end{enumerate}

This implies that $\hat{g}$ satisfies $\vert \widehat{g}(r)\vert \leq \overline{\alpha} + \overline{\beta} \ln^{1/2}(1+\vert r\vert)$ for every $r\in \mathbb{R}$ and some constant $\overline{\alpha}>0$. This also implies that $\widehat{g}(\xi)\in L^\infty(0,T; L^d(\Omega))$ for any $\xi\in L^\infty(0,T;L^{p^\star}(\Omega))$. Assuming $2C\overline{\beta}^2\leq 1$ and proceeding as in the proof of Lemma \ref{estimate_Cobs}, we get 

$$
e^{C \Vert \widehat{g}(\xi)\Vert^2_{L^\infty(0,T;L^d(\Omega))}}\leq C_1\biggl(1+\frac{\Vert \xi\Vert_{L^\infty(0,T;L^{p^\star}(\Omega))}}{\vert \Omega\vert^{1/p^{\star}}} \biggr)^{2 C\overline{\beta}^2}, \quad \forall \xi\in L^\infty(0,T;L^{p^\star}(\Omega))
$$
for some $C_1=C_1(\alpha)$.
Using \eqref{estimateK}, we then infer that
$$
 \Vert y_{\xi}\Vert_{L^\infty(0,T;L^{p^\star}(\Omega))}\leq C\Big(\Vert u_0,u_1\Vert_{\boldsymbol{V}}+\Vert g(0)\Vert_2\Big) C_1 \biggl(1+\frac{\Vert \xi\Vert_{{L^\infty(0,T;L^{p^\star}(\Omega))}}}{\vert\Omega\vert^{1/p^\star}}\biggr)^{2C \overline{\beta}^2}, \quad \forall \xi\in L^\infty(0,T;L^{p^\star}(\Omega)).
$$
Taking $\overline{\beta}$ small enough so that $2C\overline{\beta}^2<1$, we conclude that there exists $M>0$ such that $\Vert \xi\Vert_{L^\infty(0,T;L^{p^\star}(\Omega))}\leq M$ implies $\Vert K(\xi)\Vert_{L^\infty(0,T;L^{p^\star}(\Omega))}\leq M$. This is the argument (introduced in \cite{zuazua93} for the one dimensional case and) implicitly used in \cite{Li_Zhang_2000} to prove the controllability of \eqref{eq:wave-NL}. Note that, in contrast to $\overline{\beta}$, $M$ depends on $\Vert u_0,u_1\Vert_{\boldsymbol{V}}$ (and increases with $\Vert u_0,u_1\Vert_{\boldsymbol{V}}$).

The following result gives an estimate of the difference of two controlled solutions. 

\begin{lemma}\label{gap}
Let $A\in L^\infty(0,T;L^d(\Omega))$, $a\in L^\infty(0,T;L^{d+\epsilon}(\Omega))$ for any $\epsilon>0$, $B\in L^2(Q_T)$ and $(u_0,u_1)\in \boldsymbol{V}$. Let $u$ and $v$ be the null controls of minimal $L^2(q_T)$ norm for $y$ and $z$ respectively solutions of 
\begin{equation}
\label{wave_y}
\left\{
\begin{aligned}
& \partial_{tt}y - \Delta y +  A y  = u 1_{\omega} + B & \textrm{in}\  Q_T,\\
& y=0 & \textrm{on}\  \Sigma_T, \\
& (y(\cdot,0),\partial_t y(\cdot,0))=(u_0,u_1) &\textrm{in}\  \Omega,
\end{aligned}
\right.
\end{equation}
and 
\begin{equation}
\label{wave_zz}
\left\{
\begin{aligned}
& \partial_{tt}z -  \Delta z +  (A+a)z   = v 1_{\omega} + B & \textrm{in}\  Q_T,\\
& z=0 & \textrm{on}\  \Sigma_T, \\
& (z(\cdot,0),\partial_t z(\cdot,0))=(u_0,u_1) &\textrm{in}\  \Omega .
\end{aligned}
\right.
\end{equation}
Then, 
$$
\begin{aligned}
\Vert y-z\Vert_{L^\infty(0,T;H_0^1(\Omega))} 
& \leq C \Vert a \Vert_{L^\infty(0,T;L^{d+\epsilon}(\Omega))} \big(\Vert B\Vert_2+ \Vert u_0,u_1\Vert_{\boldsymbol{V}}\big) e^{C\Vert A+a\Vert^2_{L^\infty(0,T;L^d(\Omega))}}e^{C\Vert A\Vert^2_{L^\infty(0,T;L^d(\Omega))}}
\end{aligned}
$$
for some constant $C>0$ only depending on $\Omega$ and $T$.
\end{lemma}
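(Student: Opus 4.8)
The plan is to work with the difference $w:=y-z$ and to recognise it as a controlled trajectory, at rest at both ends, driven only by the small perturbation $az$. First I would rewrite the equation \eqref{wave_zz} for $z$ by transferring the perturbation term to the source, so that $z$ becomes a null-controlled solution of a wave equation sharing the \emph{same} potential $A$ as $y$:
\[ \partial_{tt}z-\Delta z+Az = v\,1_\omega+(B-az). \]
Subtracting this from \eqref{wave_y} and using that $y$ and $z$ have the same initial data $(u_0,u_1)$ and the same null target, I find that $w$ solves
\[ \partial_{tt}w-\Delta w+Aw=(u-v)\,1_\omega+az,\qquad (w(\cdot,0),\partial_t w(\cdot,0))=(0,0),\ (w(\cdot,T),\partial_t w(\cdot,T))=(0,0). \]

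The key observation --- and the step I expect to be the main obstacle --- is that, now that both problems carry the common potential $A$, the minimal $L^2(q_T)$-norm control depends \emph{linearly} on the data $(u_0,u_1,B)$. Indeed, for fixed $A$ the control of minimal norm steering $(u_0,u_1)$ to rest with source $B$ admits the representation $-R^\star(RR^\star)^{-1}P(u_0,u_1,B)$, where $P$ is the linear map sending the data to the free final state at time $T$ and $R$ is the linear reachability operator from $q_T$ (both well defined thanks to the controllability of Proposition \ref{controllability_result}). Since $u$ corresponds to the data $(u_0,u_1,B)$ and $v$ to $(u_0,u_1,B-az)$, linearity gives that $u-v$ is exactly the minimal-norm control associated with the data $(0,0,az)$. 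Hence $(w,u-v)$ is precisely the minimal-norm controlled pair of Proposition \ref{controllability_result} for potential $A$, source $az$, zero initial data and zero target.

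With this identification, Proposition \ref{controllability_result} applied to $w$ yields at once
\[ \|u-v\|_{2,q_T}+\|(w,\partial_t w)\|_{L^\infty(0,T;\boldsymbol{V})}\le C\,\|az\|_2\,e^{C\|A\|^2_{L^\infty(0,T;L^d(\Omega))}}, \]
which already supplies the factor $e^{C\|A\|^2}$ together with the prefactor $\|az\|_2$ forcing $w\to0$ as $a\to0$. It then remains to estimate $\|az\|_2$: by H\"older's inequality in space $\|a\,z\|_{L^2(\Omega)}\le\|a\|_{L^{d+\epsilon}(\Omega)}\|z\|_{L^r(\Omega)}$ with $\tfrac1r=\tfrac12-\tfrac1{d+\epsilon}$, and since $r<\tfrac{2d}{d-2}$ for $\epsilon>0$ small (any finite $r$ when $d=2$), the embedding $H_0^1(\Omega)\hookrightarrow L^r(\Omega)$ gives $\|az\|_2\le C\|a\|_{L^\infty(0,T;L^{d+\epsilon}(\Omega))}\|z\|_{L^\infty(0,T;H_0^1(\Omega))}$; in particular $B-az\in L^2(Q_T)$, so the rewriting above is legitimate.

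Finally I would bound $z$ by applying Proposition \ref{controllability_result} to \eqref{wave_zz} with its genuine potential $A+a\in L^\infty(0,T;L^d(\Omega))$, obtaining $\|(z,\partial_t z)\|_{L^\infty(0,T;\boldsymbol{V})}\le C(\|B\|_2+\|u_0,u_1\|_{\boldsymbol{V}})\,e^{C\|A+a\|^2_{L^\infty(0,T;L^d(\Omega))}}$, which contributes the second exponential. Chaining the three estimates produces the announced inequality. Apart from the linearity argument of the second paragraph, every step is a routine combination of the controllability estimate, H\"older's inequality and the Sobolev embedding.
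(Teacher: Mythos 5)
The central step of your argument --- the identification of $u-v$ as the minimal-norm control for the data $(0,0,az)$ with potential $A$ --- is false, and this is a genuine gap. Your HUM formula $-R^\star(RR^\star)^{-1}P(u_0,u_1,B)$ is linear in the data only for a \emph{fixed} potential, and your two controls are not optimal for the same potential: $u$ is the minimal-norm control for the system with potential $A$ and source $B$, but $v$ is the minimal-norm control for the system with potential $A+a$ and source $B$, \emph{not} for the rewritten system with potential $A$ and source $B-az$. By duality, minimal-norm controls are exactly the restrictions to $q_T$ of solutions of the homogeneous adjoint equation with the \emph{same} potential as the state equation: $v=\phi_a 1_\omega$ where $\partial_{tt}\phi_a-\Delta\phi_a+(A+a)\phi_a=0$, whereas optimality for your rewritten system would require $v=\tilde\phi 1_\omega$ with $\partial_{tt}\tilde\phi-\Delta\tilde\phi+A\tilde\phi=0$. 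Since $a\phi_a\neq 0$ in general, $v$ is not of that form; hence $(w,u-v)$ is an admissible but non-optimal controlled pair, and the estimate of Proposition \ref{controllability_result} --- which is stated only for the minimal-norm pair --- cannot be invoked for it. The gap also cannot be patched by the triangle inequality: bounding $\Vert u-v\Vert_{2,q_T}\leq\Vert u\Vert_{2,q_T}+\Vert v\Vert_{2,q_T}$ loses precisely the small factor $\Vert a\Vert_{L^\infty(0,T;L^{d+\epsilon}(\Omega))}$ that the lemma is about. The same obstruction reappears if you instead keep the potential $A+a$ for $w$ and put $ay$ in the source: then it is $u$ that fails to be optimal for the common potential.

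This difficulty is exactly what dictates the structure of the paper's proof, which you should compare with your second paragraph. There, the difference $Z=z-y$ is kept with the potential $A+a$ (so the perturbation $-ay$ of the \emph{known} trajectory $y$ goes to the source), and the difference of the controls is expressed through the adjoint characterization as $\Phi 1_\omega$ with $\Phi=\phi_a-\phi$. The key move is then to split $\Phi=\Psi+\psi$, where $\Psi$ solves the \emph{homogeneous} adjoint equation with potential $A+a$ (carrying the data $(\phi_{a,0}-\phi_0,\phi_{a,1}-\phi_1)$) and $\psi$ carries the source $-a\phi$ with zero data. Because $\Psi$ is a homogeneous adjoint solution for the potential $A+a$, $\Psi 1_\omega$ \emph{is} a minimal-norm control for $Z$ with source $\psi 1_\omega-ay$, so Proposition \ref{controllability_result} legitimately applies to that pair; the remainder $\psi$ is estimated separately by energy estimates and the observability inequality, and this is where the second exponential $e^{C\Vert A\Vert^2_{L^\infty(0,T;L^d(\Omega))}}$ enters. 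Your peripheral steps (the H\"older--Sobolev bound $\Vert az\Vert_2\leq C\Vert a\Vert_{L^\infty(0,T;L^{d+\epsilon}(\Omega))}\Vert z\Vert_{L^\infty(0,T;H_0^1(\Omega))}$ and the a priori bound on $z$ via Proposition \ref{controllability_result} with potential $A+a$) are correct, but they cannot compensate for the failure of the key identification.
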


\begin{proof} 
The controls of minimal $L^2(q_T)$ norm for $y$ and $z$ are given by $u=\phi 1_\omega$ and 
$v=\phi_a 1_\omega$ where $\phi$ and $\phi_a$ respectively solve the adjoint equations 
\begin{equation*}
\left\{
\begin{aligned}
& \partial_{tt}\phi - \Delta\phi +  A \phi   = 0 & \textrm{in}\  Q_T,\\
& \phi=0 & \textrm{on}\  \Sigma_T, \\
& (\phi(\cdot,0),\partial_t\phi(\cdot,0))=(\phi_0,\phi_1) &\textrm{in}\  \Omega,
\end{aligned}
\right.
\qquad\quad 
\left\{
\begin{aligned}
& \partial_{tt}\phi_{a} - \Delta\phi_{a} +  (A+a) \phi_a   = 0 & \textrm{in}\  Q_T,\\
& \phi=0 & \textrm{on}\  \Sigma_T, \\
& (\phi(\cdot,0),\partial_t\phi(\cdot,0))=(\phi_{a,0},\phi_{a,1}) &\textrm{in}\  \Omega,
\end{aligned}
\right.
\end{equation*}
for some appropriate $(\phi_0,\phi_1), (\phi_{a,0},\phi_{a,1})\in \boldsymbol{H}$. In particular, $\phi,\phi_a\in C([0,T];L^2(\Omega))\cap C^1([0,T]; H^{-1}(\Omega))$. Hence $Z:=z-y$ solves 
\begin{equation}
\label{diffZ}
\left\{
\begin{aligned}
& \partial_{tt}Z - \Delta Z +  (A+a)Z   = \Phi 1_{\omega} -ay & \textrm{in}\  Q_T,\\
& Z=0 & \textrm{on}\  \Sigma_T, \\
& (Z(\cdot,0), \partial_t z(\cdot,0))=(0,0) &\textrm{in}\  \Omega,
\end{aligned}
\right.
\end{equation}
and $\Phi:=\phi_a-\phi$ solves
\begin{equation}
 \nonumber
\left\{
\begin{aligned}
& \partial_{tt}\Phi - \Delta\Phi +  (A+a)\Phi  = -a\phi & \textrm{in}\  Q_T,\\
& \Phi=0 & \textrm{on}\  \Sigma_T, \\
& (\Phi(\cdot,0), \partial_t\Phi(\cdot,0))=(\phi_{a,0}-\phi_0,\phi_{a,1}-\phi_1) &\textrm{in}\  \Omega.
\end{aligned}
\right.
\end{equation}
In particular  (since $a\in L^\infty(0,T;L^{d+\epsilon}(\Omega))$ and $\phi\in L^\infty(0,T;L^2(\Omega))$), we get that $a\phi\in L^\infty(0,T;H^{-1}(\Omega))$ and therefore $(\Phi,\Phi_t)\in C([0,T];\boldsymbol{H})$, see \cite[Theorem 2.3]{LasieckaLionsTriggiani86}. We decompose $\Phi:=\Psi+\psi$ where $\Psi$ and $\psi$ solve respectively 
\begin{equation}
 \nonumber
\left\{
\begin{aligned}
& \partial_{tt}\Psi - \Delta\Psi +  (A+a)\Psi  = 0 & \textrm{in}\  Q_T,\\
& \Psi=0 & \textrm{on}\  \Sigma_T, \\
& (\Psi(\cdot,0), \partial_t\Psi(\cdot,0))=(\phi_{a,0}-\phi_0,\phi_{a,1}-\phi_1) &\textrm{in}\  \Omega,
\end{aligned}
\right.
\qquad \left\{
\begin{aligned}
& \partial_{tt}\psi - \Delta\psi +  (A+a)\psi   =  -a\phi & \textrm{in}\  Q_T,\\
& \psi=0 & \textrm{on}\  \Sigma_T, \\
& \psi(\cdot,0), \partial_t\psi(\cdot,0))=(0,0) &\textrm{in}\  \Omega,
\end{aligned}
\right.
\end{equation}
and we deduce that $\Psi1_\omega$ is the control of minimal $L^2(q_T)$ norm for $Z$ solution of  
\begin{equation}
 \nonumber
\left\{
\begin{aligned}
& \partial_{tt}Z - \Delta Z +  (A+a)Z   = \Psi 1_{\omega} + \Big(\psi 1_\omega-ay\Big) & \textrm{in}\  Q_T,\\
& Z=0 & \textrm{on}\  \Sigma_T, \\
& (Z(\cdot,0), \partial_t Z(\cdot,0))=(0,0) &\textrm{in}\  \Omega.
\end{aligned}
\right.
\end{equation}
Proposition \ref{controllability_result} implies that 
\begin{equation}\nonumber
\Vert \Psi\Vert_{2,q_T}+ \Vert (Z,\partial_t Z)\Vert_{L^\infty(0,T;\boldsymbol{V})}\leq C \Vert \psi 1_\omega-ay\Vert_2 e^{C\Vert A+a\Vert^2_{L^\infty(0,T;L^d(\Omega))}}.
\end{equation}

Moreover, \cite[Lemma 2.4]{Li_Zhang_2000} applied to $\psi$ leads to 
$$
\Vert (\psi,\psi_t)\Vert_{L^\infty(0,T;\boldsymbol{H})}\leq \Vert a\varphi\Vert_{L^\infty(0,T; H^{-1}(\Omega))}e^{C \Vert A+a\Vert_{L^\infty(0,T;L^d(\Omega))}}
$$ 
and $\Vert \psi\Vert_{L^2(q_T)}\leq C\Vert a\Vert_{L^\infty(0,T; L^{d+\epsilon}(\Omega))} \Vert\phi\Vert _{2}e^{C\Vert A+a\Vert^2_{L^\infty(0,T;L^d(\Omega))}}$. But, using again \cite[Lemma 2.4]{Li_Zhang_2000}, we infer that $\Vert\phi\Vert _2\leq C \Vert \phi_0,\phi_1\Vert_{\boldsymbol{H}}e^{C\Vert A\Vert_{L^\infty(0,T;L^d(\Omega))}}$ while \cite[Theorem 2.1]{Li_Zhang_2000} gives $\Vert \phi_0,\phi_1\Vert_{\boldsymbol{H}}\leq C e^{C\Vert A\Vert^2_{L^\infty(0,T;L^d(\Omega))}}\Vert \phi\Vert_{2,q_T}$. Since $u=\phi 1_\omega$, we obtain $\Vert\phi\Vert _2\leq C^2e^{C\Vert A\Vert^2_{L^\infty(0,T;L^d(\Omega))}}\Vert u\Vert_{2,q_T}e^{C\Vert A\Vert_{L^\infty(0,T;L^d(\Omega))}}$ and then   
$$
\Vert \psi\Vert_{L^2(q_T)}\leq C\Vert a\Vert_{L^\infty(0,T; L^{d+\epsilon}(\Omega))}e^{C\Vert A\Vert^2_{L^\infty(0,T;L^d(\Omega))}} e^{C\Vert A+a\Vert^2_{L^\infty(0,T;L^d(\Omega))}} \Vert u\Vert _{2,q_T}
$$
from which we deduce that
$$
\begin{aligned}
\Vert Z\Vert_{L^\infty(0,T;H_0^1(\Omega))} & \leq C \Big(\Vert \psi\Vert_{2,q_T}+ \Vert a\Vert_{L^\infty(0,T;L^{d+\epsilon}(\Omega))}\Vert y\Vert_2\Big) e^{C\Vert A+a\Vert^2_{L^\infty(0,T;L^d(\Omega))}}\\
& \leq C \Vert a \Vert_{L^\infty(0,T;L^{d+\epsilon}(\Omega))} \biggl(\Vert B\Vert_2+ \Vert u_0,u_1\Vert_{\boldsymbol{V}}\biggr) e^{C\Vert A+a\Vert^2_{L^\infty(0,T;L^d(\Omega))}}e^{C\Vert A\Vert^2_{L^\infty(0,T;L^d(\Omega))}}
\end{aligned}
$$
leading to the result. 
\end{proof}

This result allows to establish the following property for the operator $K$.

\begin{lemma}\label{lem_final}
Under the assumptions done in Theorem \ref{ZhangTH}, let $M=M(\Vert u_0, u_1\Vert_{\boldsymbol{V}},\overline{\beta})$ be such that $K$ maps $B_{L^\infty(0,T;L^{d+\epsilon}(\Omega))}(0,M)$ into itself and assume that $\hat{g}^{\prime}\in L^\infty(\mathbb{R})$.
For any $\xi^i\in B_{L^\infty(0,T;L^{d+\epsilon}(\Omega))}(0,M)$, $i=1,2$, there exists $c(M)>0$ such that  
\begin{equation}
 \nonumber
\Vert K(\xi^2)-K(\xi^1)\Vert_{L^\infty(0,T;H_0^1(\Omega))}\leq c(M) \Vert \hat{g}^{\prime}\Vert_{\infty} \Vert \xi^2-\xi^1\Vert_{L^\infty(0,T;L^{d+\epsilon}(\Omega))}.
\end{equation}
\end{lemma}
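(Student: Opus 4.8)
The plan is to reduce everything to Lemma~\ref{gap}, which already provides the estimate of the gap between two controlled solutions whose potentials differ. First I would write $y^i := K(\xi^i)$ for $i=1,2$, so that by definition $y^i$ is the minimal $L^2(q_T)$-norm null-controlled solution of \eqref{NL_z} associated with the potential $A_i := \widehat{g}(\xi^i)$, the common source term $B:=-g(0)$, the common initial data $(u_0,u_1)$ and the common target $(0,0)$. Setting $A:=\widehat{g}(\xi^1)$ and $a:=\widehat{g}(\xi^2)-\widehat{g}(\xi^1)$, so that $A+a=\widehat{g}(\xi^2)$, the functions $y^1$ and $y^2$ play exactly the roles of $y$ and $z$ in Lemma~\ref{gap}. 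I would then verify the hypotheses of that lemma: $A\in L^\infty(0,T;L^d(\Omega))$ and $a\in L^\infty(0,T;L^{d+\epsilon}(\Omega))$, both of which follow from $\widehat{g}'\in L^\infty(\R)$ together with $\xi^i\in L^\infty(0,T;L^{d+\epsilon}(\Omega))$, while $B=-g(0)\in L^2(Q_T)$ and $(u_0,u_1)\in\boldsymbol{V}$ are immediate.

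Lemma~\ref{gap} then yields
$$
\Vert K(\xi^2)-K(\xi^1)\Vert_{L^\infty(0,T;H_0^1(\Omega))}\leq C\,\Vert a\Vert_{L^\infty(0,T;L^{d+\epsilon}(\Omega))}\big(\Vert g(0)\Vert_2+\Vert u_0,u_1\Vert_{\boldsymbol{V}}\big)\,e^{C\Vert A+a\Vert^2_{L^\infty(0,T;L^d(\Omega))}}e^{C\Vert A\Vert^2_{L^\infty(0,T;L^d(\Omega))}},
$$
and it remains to control the three factors on the right-hand side. For the factor carrying the Lipschitz constant, I would use that $\widehat{g}'\in L^\infty(\R)$ makes $\widehat{g}$ globally Lipschitz, whence $\vert\widehat{g}(\xi^2)-\widehat{g}(\xi^1)\vert\leq \Vert\widehat{g}'\Vert_\infty\,\vert\xi^2-\xi^1\vert$ pointwise a.e.\ in $Q_T$; taking the $L^{d+\epsilon}(\Omega)$-norm in space and the supremum in time gives
$$
\Vert a\Vert_{L^\infty(0,T;L^{d+\epsilon}(\Omega))}\leq \Vert\widehat{g}'\Vert_\infty\,\Vert\xi^2-\xi^1\Vert_{L^\infty(0,T;L^{d+\epsilon}(\Omega))},
$$
which is precisely where the factor $\Vert\widehat{g}'\Vert_\infty$ in the conclusion originates.

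The remaining point, and the only one requiring some care, is the uniform boundedness of the two observability exponentials over the ball $B_{L^\infty(0,T;L^{d+\epsilon}(\Omega))}(0,M)$. Here I would again use $\widehat{g}'\in L^\infty(\R)$ to get the linear growth $\vert\widehat{g}(r)\vert\leq \vert g'(0)\vert+\Vert\widehat{g}'\Vert_\infty\vert r\vert$, and combine it with the continuous embedding $L^{d+\epsilon}(\Omega)\hookrightarrow L^d(\Omega)$, valid since $\Omega$ is bounded, to obtain $\Vert A_i\Vert_{L^\infty(0,T;L^d(\Omega))}\leq c_0(M)$ for $i=1,2$, where $c_0(M)$ depends only on $M$, $\Vert\widehat{g}'\Vert_\infty$, $g'(0)$ and $\Omega$. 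Consequently both $e^{C\Vert A\Vert^2}$ and $e^{C\Vert A+a\Vert^2}$ are bounded by $e^{C c_0(M)^2}$. Gathering this with $\Vert g(0)\Vert_2=\vert g(0)\vert\,\vert Q_T\vert^{1/2}$ and the fixed quantity $\Vert u_0,u_1\Vert_{\boldsymbol{V}}$ into a single constant $c(M)$ yields the claim. The only genuine obstacle is thus ensuring the uniformity of these exponential factors; once the ball constraint is propagated through the linear growth of $\widehat{g}$ and the embedding, the rest is bookkeeping resting directly on Lemma~\ref{gap}.
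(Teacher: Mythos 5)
Your proof is correct and follows the paper's route: identify $K(\xi^1)$, $K(\xi^2)$ as the minimal-norm null-controlled solutions, apply Lemma \ref{gap} with $A=\widehat{g}(\xi^1)$, $a=\widehat{g}(\xi^2)-\widehat{g}(\xi^1)$, $B=-g(0)$, and convert $\Vert a\Vert_{L^\infty(0,T;L^{d+\epsilon}(\Omega))}$ into $\Vert \widehat{g}^{\prime}\Vert_\infty\Vert\xi^2-\xi^1\Vert_{L^\infty(0,T;L^{d+\epsilon}(\Omega))}$ by the mean value theorem; this is exactly the paper's argument, with the two steps the paper leaves implicit made explicit. The one place where you take a different sub-route is the uniform bound on the exponential observability factors over the ball $B_{L^\infty(0,T;L^{d+\epsilon}(\Omega))}(0,M)$. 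The paper obtains it from the logarithmic growth of $\widehat{g}$ inherited from the growth assumption of Theorem \ref{ZhangTH}, through the estimate $e^{C\Vert\widehat{g}(\xi)\Vert^2_{L^\infty(0,T;L^d(\Omega))}}\leq C_1\bigl(1+\Vert\xi\Vert_{L^\infty(0,T;L^{p^\star}(\Omega))}/\vert\Omega\vert^{1/p^\star}\bigr)^{2C\overline{\beta}^2}$ displayed just before Lemma \ref{gap} (the same computation that produces $M$), so that the paper's $c(M)$ is independent of $\Vert\widehat{g}^{\prime}\Vert_\infty$. You instead use the linear growth $\vert\widehat{g}(r)\vert\leq\vert g^{\prime}(0)\vert+\Vert\widehat{g}^{\prime}\Vert_\infty\vert r\vert$ furnished by $\widehat{g}^{\prime}\in L^\infty(\R)$, which is equally legitimate under the stated hypotheses, so there is no gap; but your $c(M)$ then depends exponentially on $\Vert\widehat{g}^{\prime}\Vert_\infty$. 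This is worth noticing because of how the lemma is used: Remark \ref{rem_contract} reads it as the contraction criterion $\Vert\widehat{g}^{\prime}\Vert_\infty<1/c(M)$, which is a transparent smallness condition only when $c(M)$ does not itself grow with $\Vert\widehat{g}^{\prime}\Vert_\infty$; with your constant the condition remains solvable (the map $t\mapsto t\,c(M,t)$ is continuous, increasing and vanishes at $t=0$), but it becomes implicit.
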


\begin{proof} For any $\xi^i\in B_{L^\infty(0,T;L^{p^\star}(\Omega))}(0,M)$, $i=1,2$, let $y_{\xi^i}=K(\xi^i)$ be the null controlled solution of 
\begin{equation}
 \nonumber
\left\{
\begin{aligned}
& \partial_{tt}y_{\xi^i} - \Delta y_{\xi^i} +  y_{\xi^i} \,\widehat{g}(\xi^i)= -g(0)+f_{\xi^i} 1_{\omega} &\textrm{in}\  Q_T,\\
& y_{\xi^i}=0 &\textrm{on}\  \Sigma_T, \\
& (y_{\xi^i}(\cdot,0),\partial_ty_{\xi^i}(\cdot,0))=(u_0,u_1) &\textrm{in}\  \Omega,
\end{aligned}
\right.
\end{equation}
with the control $f_{\xi^i} 1_{\omega}$ of minimal $L^2(q_T)$ norm. We observe that $y_{\xi^2}$ is solution of 
\begin{equation}
 \nonumber
\left\{
\begin{aligned}
& \partial_{tt}y_{\xi^2} - \Delta y_{\xi^2} +  y_{\xi^2} \,\widehat{g}(\xi^1)+ y_{\xi^2}(\widehat{g}(\xi^2)-\widehat{g}(\xi^1))= -g(0)+f_{\xi^2} 1_{\omega} &\textrm{in}\  Q_T,\\
& y_{\xi^2}=0 &\textrm{on}\  \Sigma_T, \\
& (y_{\xi^2}(\cdot,0),\partial_t y_{\xi^2}(\cdot,0))=(u_0,u_1) &\textrm{in}\  \Omega.
\end{aligned}
\right.
\end{equation}
It follows from Lemma \ref{gap} applied with $B=-g(0)$, $A=\hat{g}(\xi^1)$, $a=\hat{g}(\xi^2)-\hat{g}(\xi^1)$, that 
\begin{equation}\label{gapxi}
\Vert y_{\xi^2}-y_{\xi^1} \Vert_{L^\infty(0,T;H_0^1(\Omega))}\leq A(\xi^1,\xi^2)\Vert \widehat{g}(\xi^2)-\widehat{g}(\xi^1)\Vert_{L^\infty(0,T;L^{d+\epsilon}(\Omega))}
\end{equation}
where the positive constant  
$$
A(\xi^1,\xi^2):=C  \biggl(\Vert g(0)\Vert_2+ \Vert u_0,u_1\Vert_{\boldsymbol{V}}\biggr) e^{C\Vert \hat{g}(\xi^1)\Vert^2_{L^\infty(0,T;L^d(\Omega))}}e^{C\Vert \hat{g}(\xi^2)\Vert^2_{L^\infty(0,T;L^d(\Omega))}}
$$
is bounded by some $c(M)>0$ for every $\xi^i\in B_{L^\infty(0,T;L^{d+\epsilon}(\Omega))}(0,M)$. The result follows from \eqref{gapxi}. 
\end{proof}

\begin{remark}\label{rem_contract}
By Lemma \ref{lem_final}, if $\Vert \hat{g}^{\prime}\Vert_{\infty}<  1/c(M)$ then the operator $K:L^\infty(0,T;L^{d+\epsilon}(\Omega))\to L^\infty(0,T;L^{d+\epsilon}(\Omega))$ is contracting. Note however that the bound depends on the norm $\Vert u_0,u_1\Vert_{\boldsymbol{V}}$ of the initial data to be controlled.
\end{remark}

 \bibliographystyle{siam}
 \bibliography{wavecontinuation.bib}
\end{document}